\newtheorem{theorem}{Theorem}[section]
\newtheorem{proposition}[theorem]{Proposition}
\newtheorem{lemma}[theorem]{Lemma}
\newtheorem{corollary}[theorem]{Corollary}
\theoremstyle{definition}
\newtheorem{definition}[theorem]{Definition}
\newtheorem{assumption}[theorem]{Assumption}
\newtheorem{example}[theorem]{Example}
\theoremstyle{remark}
\newtheorem{remark}[theorem]{Remark}
\numberwithin{equation}{section}
\DeclareMathOperator*{\TV}{TV}
\DeclareMathOperator*{\pred}{pred}
\DeclareMathOperator*{\ared}{ared}
\newcommand*\dd{\mathop{}\!\mathrm{d}}
\newcommand{\F}{\mathcal{F}}
\newcommand{\weakstarto}{\stackrel{\ast}{\rightharpoonup}}
\newcommand{\R}{\mathbb{R}}
\newcommand{\N}{\mathbb{N}}
\newcommand{\Z}{\mathbb{Z}}
\newcommand{\Ha}{\mathcal{H}}
\definecolor{darkgreen}{rgb}{0,0.5,0}
\long\def\@mn@@@marginnote[#1]#2[#3]{%
  \begingroup
    \ifmmode\mn@strut\let\@tempa\mn@vadjust\else
      \if@inlabel\leavevmode\fi
      \ifhmode\mn@strut\let\@tempa\mn@vadjust\else\let\@tempa\mn@vlap\fi
    \fi
    \@tempa{%
      \vbox to\z@{%
        \vss
        \@mn@margintest
        \if@reversemargin\if@tempswa
            \@tempswafalse
          \else
            \@tempswatrue
        \fi\fi
          \rlap{%
            \if@mn@verbose
              \PackageInfo{marginnote}{xpos seems to be \@mn@currxpos}%
            \fi
            \begingroup
              \ifx\@mn@currxpos\relax\else\ifx\@mn@currxpos\@empty\else
                  \kern-\dimexpr\@mn@currxpos\relax
              \fi\fi
              \ifx\@mn@currpage\relax
                \let\@mn@currpage\@ne
              \fi
              \if@twoside\ifodd\@mn@currpage\relax
                  \kern\oddsidemargin
                \else
                  \kern\evensidemargin
                \fi
              \else
                \kern\oddsidemargin
              \fi
              \kern 1in
            \endgroup
            \kern\marginnotetextwidth\kern\marginparsep
            \vbox to\z@{\kern\marginnotevadjust\kern #3
              \vbox to\z@{%
                \hsize\marginparwidth
                \linewidth\hsize
                \kern-\parskip
                \marginfont\raggedrightmarginnote\strut\hspace{\z@}%
                \ignorespaces#2\endgraf
                \vss}%
              \vss}%
          }%
      }%
    }%
  \endgroup
}
\begin{document}
\title[Sequential Linear Integer Programming for Integer Optimal Control]{Sequential Linear Integer Programming for Integer Optimal Control with Total Variation Regularization}
\author{Sven Leyffer}
\address{Mathematics and Computer Science Division, 
Argonne National Laboratory, Lemont, IL 60439, U.S.A.}
\author{Paul Manns}
\address{Faculty of Mathematics, TU Dortmund University, 44227, Germany}
\date{\today}
\begin{abstract}
We propose a trust-region method that solves a sequence of linear
integer programs to tackle integer optimal control problems
 regularized with a total variation penalty.

The total variation penalty implies that the considered 
integer control problems admit minmizers.
We introduce a local optimality concept for the  problem, which arises from the infinite-dimensional
perspective.
In the case of a one-dimensional domain of the control function,
we prove convergence of the iterates produced
by our algorithm to points that satisfy first-order stationarity
conditions for local optimality.
We demonstrate the theoretical findings on a computational example.
\end{abstract}
\subjclass[2010]{49M05,90C10}

\keywords{Mixed-integer optimal control, Total variation}

\maketitle

\section{Introduction}

Integer optimal control problems cover many practical 
problems from different fields like traffic light 
control \cite{gottlich2017partial}, gas network control 
\cite{hante2017challenges}, and automotive control 
\cite{gerdts2005solving,kirches2013mixed}.
Let $\alpha > 0$, $1 \le d \in \N$, $1 \le M \in \N$,
and $\Omega \subset \R^d$ be a bounded and 
connected Lipschitz extension domain. We consider the 
optimization problem
\begin{gather}\label{eq:p}
\begin{aligned}
\min_{v\in L^{2}(\Omega)}\ & J(v) \coloneqq F(v) + \alpha \TV(v) \\
\text{s.t.}\ & v(x) \in \{\nu_1,\ldots,\nu_M\} \subset \Z
\text{ for almost all (a.a.)\ } x\in \Omega,
\end{aligned}\tag{P}
\end{gather}
where the function $F : L^2(\Omega) \to \R$ is lower
semi-continuous and bounded from below and
$\TV : L^1(\Omega) \to [0,\infty]$ denotes the total variation.
While requiring the inclusion $\{\nu_1,\ldots,\nu_M\} \subset \Z$ improves
the presentation of our arguments, all of our proofs can be adjusted
 such that the claims also hold for $\{\nu_1,\ldots,\nu_M\} \subset \R$.
We assume that the control values $\nu_i$ are in ascending order,
that is, $\nu_i < \nu_{i+1}$ for all $i \in \{1,\ldots,M-1\}$.
Problems of this form arise from optimal control problems by
choosing $F = j \circ S$, where $j$ is some objective and $S$ the control-to-state operator
of some underlying controlled process (e.g.\ some ODE or PDE).

We also introduce the modified control problem, where the
total variation regularizer is absent as in \eqref{eq:phat}:
\begin{gather}\label{eq:phat}
\begin{aligned}
\inf_{v\in L^{2}(\Omega)}\ & F(v) \\
\text{s.t.}\ & v(x) \in \{\nu_1,\ldots,\nu_M\} \subset \Z
\text{ for a.a.\ } x\in \Omega.
\end{aligned}\tag{\^{P}}
\end{gather}

Since the seminal work of Rudin et al.~\cite{rudin1992nonlinear},
regularization by means of total variation has become a
widespread algorithmic tool in mathematical imaging; see, for example,
\cite{vogel1996iterative,dobson1996analysis,chambolle1997image,chan1998total,fornasier2009subspace,bredies2010total,lellmann2014imaging,hintermuller2017optimal}
and also triggered interest in the control community; see, for example,
\cite{loxton2012control,kaya2020optimal,engel2021optimal}.

Recently, Clason et al.\  analyzed and demonstrated the usefulness of
total variation regularization for multi-material topology optimization \cite{clason2018total}. They combined a so-called multibang regularizer
\cite{clason2014multi,clason2018vector}
with a total variation regularizer to promote both that the optimized function
is discrete valued in large parts of the domain and that it has large connected components.
This approach can be considered as solving a tight relaxation of an integer control problem
because the notions of multi-material topology optimization and integer
optimal control coincide if the material function of the former may 
take only discrete values.

Another actively researched technique in the context of integer optimal control is  
combinatorial integral approximation decomposition \cite{sager2011combinatorial}, where the
solution process is split into the solution of a continuous relaxation and the solution of an
approximation problem that can be solved efficiently
\cite{sager2005numerical,sager2012integer,manns2018multidimensional,manns2020improved,kirches2020compactness}.
The major drawback of this technique is
that the fractional-valued solution of the relaxation is approximated in the weak-$^*$ topology
of $L^\infty$, which often yields highly oscillating control functions.
High oscillations hamper meaningful interpretations and implementations of the
computed control functions in practice and correspond to high values of the total variation.
In particular, if a fractional-valued
control $v$ is approximated by discrete-valued ones $v^n$ in the weak-$^*$ topology of $L^\infty$ ($v^n\weakstarto v$), 
the corresponding sequence of total variation terms tends to
infinity; that is, we have $\TV(v^n) \to \infty$ regardless of
the value of $\TV(v)$
 (see, for example,
 the comments in
 \cite[Section 3]{hante2013relaxation}
 and
 \cite[Section 5]{bestehorn2020mixed}).

Therefore, minimization and constraining of total variation terms
have been included in the approximation step of the combinatorial 
integral approximation decomposition in  recent articles
\cite{bestehorn2019switching,sager2019mixed,bestehorn2020mixed}.
Furthermore, the convex relaxation of the multibang regularizer
can be integrated into the combinatorial integral decomposition
approach as well; see \cite{manns2020relaxed}.
Both of these adjustments can considerably reduce the resulting total variation,
or switching costs, 
for a given discretization grid and approximation quality.
High oscillations of the resulting control are often inevitable, however,
if high approximation quality is desired.

The reason is  that the approximation arguments 
underlying the combinatorial integral approximation decomposition
are valid only for the problem class \eqref{eq:phat}
 under the assumptions of \cite{kirches2020compactness} and do not
hold for \eqref{eq:p}. In other words, unlike for the relationship
between \eqref{eq:phat} and its continuous relaxation, the
relationship between \eqref{eq:p} and
its continuous relaxation cannot be exploited with
the combinatorial integral decomposition approach.

In contrast to \eqref{eq:phat}, however, bounded subsets of
the feasible set of \eqref{eq:p} are closed with respect to $\{\nu_1,\ldots,\nu_M\}$-valued 
control functions. To exploit this feature, therefore, in this article we follow a different approach from the
relaxation-based approaches.  
Specifically, we propose to solve \eqref{eq:p}
to local optimality (which will be defined below)
by solving a sequence of linear integer programs (IPs) in which we can
incorporate local gradient information that is available due
to the function space perspective.
Our approach is related to solving a discretized and 
unregularized optimal control problem to global optimality; see
\cite{buchheim2012effective}.
However, the approach in \cite{buchheim2012effective}
does not scale well if very fine discretization meshes are used.
A similar approach to the proposed one was analyzed and
demonstrated recently in \cite{hahn2020binary},
where the authors take the perspective of modifying sets instead
of functions and their algorithm also seeks for a point that satisfies
a local stationarity condition. However, there is no guarantee
that a feasible point that satisfies this condition exists,
and highly oscillating control functions may occur if (a subsequence of) the iterates
converge weakly-$^*$ to an
infeasible (i.e.,  fractional-valued) limit function.
We also mention the articles \cite{exler2007trust,newby2015trust}
in which trust-region-based heuristics and concepts from
nonlinear programming are used to obtain points of low objective
value for finite-dimensional mixed-integer nonlinear programs.

We argue in section \ref{sec:trm_analysis_1d} that our algorithmic approach
has similarities to active set methods.
The switching structure of the resulting control settles down 
eventually, and afterwards only the switch locations
of a fixed sequence of switches are optimized until a 
stationary point is reached. We note that the recently proposed methods on switching time
optimization \cite{de2019mixed,de2020sparse}, which are based on proximal algorithms,
have similarities to this approach.

\subsection{Contributions}
The topological restrictions induced by the 
discreteness  constraint and the $\TV$-regularizer
are strong enough such that \eqref{eq:p} admits optimal solutions.
We exploit this insight to construct a gradient-based
descent algorithm that operates on the feasible set of \eqref{eq:p}.
Specifically, we introduce a trust-region subproblem that becomes an
IP after discretization. We present an algorithm that solves a sequence
of these trust-region subproblems to optimize \eqref{eq:p}.

We restrict to one-dimensional domains
to analyze the asymptotics of the algorithm in function 
space. We show that limit points of the computed
sequence of iterates satisfy a necessary condition for local
optimality of both the trust-region subproblem and \eqref{eq:p}.
The necessary condition may be interpreted
as a first-order condition, and the algorithm therefore
resembles a trust-region algorithm for nonlinear programming.

We provide numerical evidence that validates the theoretical results.
The numerical results also show that---for a fixed fine-discretization grid---the method yields 
results comparable to tackling the discretized problem, a mixed-integer nonlinear program,
with integer programming solvers at much lower computational cost.

\subsection{Notation}
For $p \in [1,\infty)$, $L^p(\Omega)$ denotes the space
of $p$-integrable functions, and $L^\infty(\Omega)$ denotes the
space of all essentially bounded functions.
We let $\N = \{0,1,\ldots\}$ denote the set of natural numbers
including $0$. We denote the Lebesgue measure in $\R^d$ by
$\lambda$. For $k \in \N$, we denote
the $k$-dimensional Hausdorff measure for subsets of $\Omega$
by $\Ha^k$.
A function $u \in L^1(\Omega)$ is of bounded variation if
\[ \TV(u) \coloneqq \sup \left\{
\int_\Omega u(x) \nabla \cdot \phi(x)\,\dd x \,\Big|\,
\phi \in C^1_c(\Omega)^d
\text{ and }
\sup_{s \in \Omega}\|\phi(s)\| \le 1 \right\} < \infty, \]
where $C^1_c(\Omega)$ denotes the class of continuously differentiable
functions that are compactly supported in $\Omega$
and $\|\cdot\|$ denotes Euclidean norm.
The space of functions in $L^1(\Omega)$ with bounded variation is
called $BV(\Omega)$ and is a Banach space when equipped with
the norm $\|u\|_{BV} = \|u\|_{L^1(\Omega)} + \TV(u)$.
We denote the $\{0,1\}$-valued indicator function
of a set $A$ by $\chi_A$.
For an optimization problem (Q), we denote its feasible set
by $\F_{\text{(Q)}}$.

\subsection{Structure of the Remainder of the Paper}
We briefly recall that \eqref{eq:p} has minimizers, and we
introduce our concept of local solutions in
section \ref{sec:existence}.
In section \ref{sec:algorithm} we present our proposed
trust-region algorithm and the trust-region subproblem.
In section \ref{sec:trm_analysis_1d} we derive a suitable notion of stationarity for locally
optimal points and analyze the asymptotics of the trust-region
algorithm, both for the case that $\Omega \subset \R$.
We also show how the trust-region subproblems can be approximated
on uniform grids that discretize $\Omega$.
In section \ref{sec:experiments} we provide computational experiments that
validate our analysis.
We give concluding remarks in section \ref{sec:conclusion}.

\section{Optimal Solutions of $\TV$-Regularized Integer
Control Problems}\label{sec:existence}

We show the existence of optimal controls for \eqref{eq:p} and
introduce the concept of locally optimal solutions for \eqref{eq:p}.

\subsection{Existence of Optimal Solutions}

In contrast to problem \eqref{eq:phat}
\cite{manns2018multidimensional,kirches2020approximation,kirches2020compactness}
or problems with a regularizer induced by a
convex, but not strictly convex, integral superposition operator 
\cite{clason2018vector,hahn2020binary,manns2020relaxed},
we will briefly show that the problem \eqref{eq:p} has
a solution under the following mild assumptions.

\begin{assumption}\label{ass:f_setting}
\mbox{~}
\begin{enumerate}
\item\label{asi:Flsc} Let $F : L^2(\Omega) \to \R$
                      be lower semi-continuous.
\item\label{asi:Fbnd} Let $F$ be bounded from below.
\end{enumerate}
\end{assumption}

Before stating and proving the existence of a minimizer in Proposition
\ref{prp:min_existence}, we briefly state and prove
that the feasible set of \eqref{eq:p}, $\F_{\eqref{eq:p}}$,
is closed with respect to norm convergence in $L^1(\Omega)$,
which will be employed in the derivation of several results
in the remainder of this paper.

\begin{lemma}\label{lem:discreteness_feasibility}
Let $(v^n)_n$ be bounded in $BV(\Omega)$. Let $v^n$
be feasible for \eqref{eq:p} for all $n \in \N$.
Let $v^n \to v$ in $L^1(\Omega)$.
Then $v$ is feasible for \eqref{eq:p}.
\end{lemma}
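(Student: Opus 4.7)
The plan is to reduce the claim to pointwise convergence on a set of full measure and then exploit that $\{\nu_1,\ldots,\nu_M\}$ is a closed subset of $\R$ (being finite). First I would use that $L^1(\Omega)$-convergence implies a.e.\ convergence along a subsequence: extract $(v^{n_k})_k$ with $v^{n_k}(x) \to v(x)$ for a.a.\ $x \in \Omega$. Denote by $N_k \subset \Omega$ the null set on which $v^{n_k}$ fails to be discrete-valued, and by $N$ the null set on which pointwise convergence fails. Then $N \cup \bigcup_k N_k$ is a null set, and for every $x$ in its complement we have $v^{n_k}(x) \in \{\nu_1,\ldots,\nu_M\}$ for all $k$ with $v^{n_k}(x) \to v(x)$.

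Next I would observe that $\{\nu_1,\ldots,\nu_M\}$ is a finite, hence closed, subset of $\R$, so the pointwise limit satisfies $v(x) \in \{\nu_1,\ldots,\nu_M\}$ on the complement of this null set. This verifies the discreteness constraint for a.a.\ $x \in \Omega$. To close the argument I would check $v \in L^2(\Omega)$: since $v$ takes values in the finite set $\{\nu_1,\ldots,\nu_M\}$ almost everywhere and $\Omega$ is bounded, it follows that $v \in L^\infty(\Omega) \subset L^2(\Omega)$, so $v$ belongs to the feasible set $\F_{\eqref{eq:p}}$.

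I do not expect a genuine obstacle here; the statement is essentially a compatibility between $L^1$-convergence and a closed pointwise constraint. The only place one needs to be careful is the passage from $L^1$-convergence of the full sequence to a.e.\ convergence, which requires selecting a subsequence; however, this suffices for the conclusion since discreteness is a pointwise property that only needs to be verified a.e. The boundedness of $(v^n)_n$ in $BV(\Omega)$ hypothesized in the lemma is not used in the proof of feasibility itself, but it is the natural setting in which the $L^1$-limit $v$ would arise via compact embedding and in which subsequent results of the paper will apply.
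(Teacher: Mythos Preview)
Your proposal is correct and follows essentially the same approach as the paper: extract an a.e.\ convergent subsequence from the $L^1$-convergent sequence and use that $\{\nu_1,\ldots,\nu_M\}$ is closed to conclude the pointwise constraint passes to the limit. You are slightly more explicit than the paper (naming the null sets, noting closedness of finite sets, and verifying $v \in L^2(\Omega)$), and your observation that the $BV$-boundedness hypothesis is not actually used in the feasibility argument itself is accurate.
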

\begin{proof}
Because $v^n$ converges in $L^1(\Omega)$, it has a pointwise
almost everywhere (a.e.)\ convergent subsequence $(v^{n_k})_k$.
The claim follows from $v^{n_k}(x) \to v(x)$
for a.a.\ $x \in \Omega$
and $v^{n_k}(x) \in \{\nu_1,\ldots,\nu_M\}$ for a.a.\ $x \in\Omega$.
\end{proof}

We combine Lemma \ref{lem:discreteness_feasibility}
with the properties a minimizing sequence of \eqref{eq:p} to prove
the existence of minimizers. A minimizing sequence of an
optimization problem is a sequence of feasible points such that
a corresponding sequence of objective values converges to the infimal
objective value of the optimization problem or is unbounded below.

\begin{proposition}\label{prp:min_existence}
If Assumption \ref{ass:f_setting} holds, then there exists
a minimizer $\bar{v} \in BV(\Omega)$ of \eqref{eq:p}.
\end{proposition}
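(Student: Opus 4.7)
The plan is to apply the direct method of the calculus of variations. I would start with a minimizing sequence $(v^n)_n \subset \F_{\eqref{eq:p}}$ and first argue that it is bounded in $BV(\Omega)$. Since $F$ is bounded from below by Assumption \ref{ass:f_setting}\eqref{asi:Fbnd} and $J(v^n)$ converges to the infimum (which must be finite because \eqref{eq:p} has feasible points of finite $\TV$, e.g.\ constants), the sequence $(\TV(v^n))_n$ is bounded. Feasibility forces $\|v^n\|_{L^\infty(\Omega)} \le \max_i |\nu_i|$, and because $\Omega$ is bounded this yields a uniform bound on $\|v^n\|_{L^1(\Omega)}$. Together these give boundedness in $BV(\Omega)$.

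Next I would invoke the compact embedding $BV(\Omega) \hookrightarrow L^1(\Omega)$, which is available because $\Omega$ is a bounded Lipschitz extension domain. This yields a (not relabeled) subsequence $v^n \to \bar v$ in $L^1(\Omega)$ for some $\bar v \in L^1(\Omega)$. Feasibility of $\bar v$ is immediate from Lemma \ref{lem:discreteness_feasibility}, so in particular $\bar v \in L^\infty(\Omega)$ and $\bar v \in \F_{\eqref{eq:p}}$.

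It remains to pass to the limit in the objective. For the data term, the uniform $L^\infty$-bound combined with $L^1$-convergence upgrades to $L^2$-convergence via dominated convergence (the pointwise a.e.\ convergent subsequence from the proof of Lemma \ref{lem:discreteness_feasibility} is dominated by the constant $\max_i |\nu_i|$), so lower semicontinuity of $F$ in $L^2(\Omega)$ gives $F(\bar v) \le \liminf_n F(v^n)$. The total variation is lower semicontinuous with respect to $L^1(\Omega)$-convergence (a standard property of $\TV$), hence $\TV(\bar v) \le \liminf_n \TV(v^n)$. Adding both inequalities yields $J(\bar v) \le \liminf_n J(v^n) = \inf_{\F_{\eqref{eq:p}}} J$, which shows that $\bar v$ is a minimizer; the bound on $\TV(\bar v)$ together with $\bar v \in L^\infty(\Omega) \subset L^1(\Omega)$ places it in $BV(\Omega)$.

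The only mildly delicate step is bridging the mismatch between the topology in which compactness is obtained (norm in $L^1$) and the topology in which $F$ is assumed lower semicontinuous (norm in $L^2$); this is handled cleanly by the uniform $L^\infty$-bound that is built into the feasible set. Everything else is a textbook application of the direct method, using Lemma \ref{lem:discreteness_feasibility} to secure feasibility of the limit.
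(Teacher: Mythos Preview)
Your argument is correct and follows essentially the same route as the paper: the direct method with boundedness of a minimizing sequence in $BV(\Omega)$, extraction of an $L^1$-convergent subsequence, feasibility via Lemma \ref{lem:discreteness_feasibility}, the upgrade to $L^2$-convergence via the uniform $L^\infty$-bound, and lower semicontinuity of $F$ and $\TV$. The only cosmetic difference is that the paper phrases the compactness step as weak-$^*$ sequential compactness in $BV(\Omega)$ rather than as the compact embedding $BV(\Omega)\hookrightarrow L^1(\Omega)$, but these amount to the same thing here.
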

\begin{proof}
First, we follow the direct method of calculus of variations
to deduce the existence of a convergent minimizing sequence.
Second, we prove that the limit is feasible.

Because $F$ and $\alpha \TV$ are bounded from below,
\eqref{eq:p} is bounded from below as well.
The feasible set is bounded in $L^\infty(\Omega)$
and hence in $L^1(\Omega)$. Therefore, all minimizing sequences
for \eqref{eq:p} in $BV(\Omega)$ are bounded in $L^1(\Omega)$
and, because of the presence of the term $\alpha\TV$, also
in $BV(\Omega)$.
The space $BV(\Omega)$ admits a weak-$^*$ topology,
and we may extract a weakly $^*$-convergent subsequence 
from a minimizing sequence in $BV(\Omega)$ that satisfies
$v^n \to \bar{v}$ in $L^1(\Omega)$ (and by boundedness in $L^2(\Omega)$
and pointwise a.e.\ convergence of a subsequence also in $L^2(\Omega)$).
Thus $F(\bar{v}) \le \liminf_{n\to\infty} F(v^n)$.
Moreover, $\TV(\bar{v}) \le \liminf_{n\to\infty} \TV(v^n)$
follows from the semi-continuity of $\TV$ with respect to
weak-$^*$ convergence in $BV$ or, more generally, because
norms of Banach spaces are always weak-$^*$ lower semi-continuous
if they arise as dual norms of other Banach spaces.

The feasibility of the limit $\bar{v}$
then follows from Lemma \ref{lem:discreteness_feasibility}.
\end{proof}

\begin{remark}
The existence result in Proposition \ref{prp:min_existence}
can be derived from general results on the existence
of minimizers for functionals that are regularized by
total variation; see, for example,
\cite[Chap.\,4\,\&\,5]{ambrosio2000functions}.
The case of binary controls is mentioned explicitly
in \cite[Cor.\ 2.6]{burger2012exact}.
Because
the argument can be stated  briefly and shows how the
boundedness of the $\TV$-term implies the existence
of discrete-valued minimizers, we provide an explicit proof here.
\end{remark}

\begin{remark}
Another mode of convergence in $BV(\Omega)$ that lies
between weak-$^*$ and norm convergence is strict convergence.
This means that in additional to $v^{n_k} \to v$ in $L^1(\Omega)$,
it holds that $\TV(v^{n_k}) \to \TV(v)$.
Constructing strictly convergent sequences
to local minimizers
will be guiding us in our algorithm construction below.
\end{remark}

\begin{remark}
The proof uses a weak-$^*$ convergent minimizing sequence.
We cannot assume uniform convergence or strict convergence for
general minimizing sequences. For example, the
typewriter sequence\footnote{see \url{https://terrytao.wordpress.com/2010/10/02/245a-notes-4-modes-of-convergence/}} is a $\{0,1\}$-valued weak-$^*$-convergent
sequence in $BV(\Omega)$ with limit $0$ but $\TV$-value
of $2$ for all iterates. It is also not pointwise convergent.
\end{remark}

\begin{remark}
As is used in the proof of Lemma \ref{lem:discreteness_feasibility},
we can always obtain $v^n \to v$ in $L^p(\Omega)$ for all $p \in [0,\infty)$
for feasible controls $v^n$ by virtue of Lebesgue's dominated
convergence theorem because
$\|v^n\|_{L^\infty(\Omega)} \le \max\{ |\nu_i| \,|\, i\in\{1,\ldots,m\}\}$ 
and $\Omega$ is a bounded domain. This allows us to only assume lower
semicontinuity instead of the usual weak lower semicontinuity of $F$.
If additional fractional-valued controls are considered that are
not regularized with a $\TV$-penalty but, for example, with an $L^2$-penalty,
$F$ has to be weakly lower semicontinuous for the analysis
to hold.
\end{remark}

\subsection{Locally Optimal Solutions}
We start with the definition of our concept of locally optimal solutions.
\begin{definition}\label{dfn:roptimal}
Let $v$ be feasible for \eqref{eq:p} and $r > 0$.
Then, we say that $v$ is $r$-optimal if and only if
\[ F(v) + \alpha \TV(v) \le F(\tilde{v}) + \alpha \TV(\tilde{v})
   \text{ for all } \tilde{v} \text{ feasible for } \eqref{eq:p}
   \text{ with } \|v - \tilde{v}\|_{L^1(\Omega)} \le r. \]
\end{definition}
If $v$ is optimal for \eqref{eq:p}, then
$v$ is also $r$-optimal for all $r > 0$. Thus, $r$-optimality
is a necessary condition for optimality.
This optimality concept is similar to the notion of \emph{local minimizers}
for finite-dimensional mixed-integer nonlinear programs \cite{newby2015trust},
where optimality of the integer solution in a neighborhood 
in $\R^n$ is considered.

In a Euclidean space, one can always choose a neighborhood around a feasible integer point
that is small enough such that it does not contain any further 
feasible integer points.
This is not true in the infinite-dimensional case.
On the contrary, the situation in the following example is
generic.
\begin{example}
We consider the case of an open domain
$\emptyset \neq \Omega \subset \R^d$, $M \ge 2$
and $v \in BV(\Omega) \cap \F_{\eqref{eq:p}}$.
We may consider a ball $B$ and
construct $\hat{v} \in \F_{\eqref{eq:p}}$ by setting
\[ \hat{v}(x) \coloneqq \left\{
\begin{aligned}
\nu_{i + 1} & \text{ if } x \in B \text{ and } v(x) = \nu_i
\text{ for } i < M, \\
\nu_{M - 1} & \text{ if } x \in B \text{ and } v(x) = \nu_M, \\
v(x) & \text{ if } x \in \Omega\setminus B.
\end{aligned}
\right.
\]
Then $\|v - \hat{v}\|_{L^1(\Omega)} \le \max_{i\neq j} |\nu_i - \nu_j| \lambda(B)$, which tends to zero
when driving the radius of $B$ to zero.
\end{example}

\begin{remark}
Optimizing in $L^1$-neighborhoods and considering the total variation in the objective
rather than as a constraint implies that we base our approach in the weak-$^*$ topology of the space $BV(\Omega)$.
Alternatively, we could have considered the norm topology (with the additional restriction $\TV(\tilde{v} - v) \le r$)
or the strict topology (with the additional restriction $|\TV(\tilde{v}) - \TV(v)| \le r$) on $BV(\Omega)$.

We believe that the proposed approach is beneficial for several reasons. The norm and strict topology are more restrictive.
In particular, for the setting of one-dimensional domains with finitely many integer-valued controls
$\TV(\tilde{v} - v) \le r$ implies that $\tilde{v} - v$ is a constant function for $r < 1$,
see also \eqref{eq:tv_diff_inz_1d} below.
Moreover, as we will see in the convergence proofs of the proposed algorithmic framework,
the regularization with $\alpha \TV(v)$ (which may be interpreted as a \emph{soft constraint})
is sufficient to obtain strict convergence of the produced subsequences without enforcing the strict topology
in the constraint of the trust-region subproblem. Therefore, our algorithmic framework allows that
nontrivial changes in the switching structure in each subproblem are feasible but the obtained
strict convergence implies that the switching structure eventually settles over the iterations,
see also our Remark \ref{rem:active_set_idea} after the convergence proofs.
\end{remark}

\section{Sequential Linear Integer Programming Algorithm}\label{sec:algorithm}

In this section we develop a function space algorithm
to compute $r$-optimal points of \eqref{eq:p}.
We use a trust-region strategy for globalization.
The trust-region subproblem and basic
properties are provided in section \ref{sec:trsub}, and the
algorithm is presented in section \ref{sec:slip}.
We show that, after discretization of the control with
piecewise constant functions (discontinuous Galerkin
elements of order 0), the trust-region subproblem
becomes an IP.

\subsection{Trust-Region Subproblem}\label{sec:trsub}

The concept of $r$-optimality as introduced in Definition
\ref{dfn:roptimal} is local in terms of changes of a function
$v$ with respect to $\|\cdot\|_{L^1(\Omega)}$.
We can find a sufficient condition for an $r$-optimal point by
studying the following trust-region subproblem, where the
objective is linearized partially around $\tilde{v}$; that is,
we linearize $F$ while using the exact $\TV$ term: 
\begin{gather}\label{eq:tr}
(\text{TR}(\tilde{v},\tilde{g},\Delta)) \coloneqq
\left\{
\begin{aligned}
\min_{v\in L^{2}(\Omega)}\ &  (\tilde{g}, v - \tilde{v})_{L^2(\Omega)}
           + \alpha \TV(v)-\alpha \TV(\tilde{v}) \\
\text{s.t.}\ & \|v - \tilde{v}\|_{L^1(\Omega)} \le \Delta,\\
& v(x) \in \{\nu_1,\ldots,\nu_M\} \text{ a.e.}
\end{aligned}\tag{TR}
\right.
\end{gather}
In \eqref{eq:tr}, we use $\tilde{g} = \nabla F(\tilde{v})$ if $F$
is differentiable at $v \in \F_{\eqref{eq:p}}$.
Otherwise, we assume that
$\tilde{g} \in \partial F(\tilde{v}) = \{ g \in L^2(\Omega) \,|\,
F(v) - F(\tilde{v}) \ge (g, v - \tilde{v})_{L^2(\Omega)} \}$,
and in particular $\partial F(\tilde{v}) \neq \emptyset$.

\begin{remark}
We highlight that for the intended use in optimal control
a common structure is $F = j \circ S$, where $j$ is some objective 
functional, for example, of tracking-type, and $S$ is the solution operator
of some ODE or PDE. Then $\tilde{g} = \nabla F(\tilde{v})$ can
be determined as usual with one solve of the adjoint equation.
\end{remark}

\begin{proposition}\label{prp:tr_well_defined}
Let $\tilde{v} \in \F_{\eqref{eq:p}} \cap BV(\Omega)$,
$\tilde{g} \in L^2(\Omega)$, and $\Delta > 0$.
Then, $(\text{\emph{\ref{eq:tr}}}(\tilde{v},\tilde{g},\Delta))$ has a minimizer.
\end{proposition}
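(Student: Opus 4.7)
The plan is to mimic the direct-method argument used in Proposition \ref{prp:min_existence}, with three adjustments for the new problem: the additional $L^1$ trust-region constraint, the linear term involving $\tilde g$, and the fact that the $\TV$ terms appear as a difference. I will set up a minimizing sequence, extract a weak-$^*$ convergent subsequence in $BV(\Omega)$, and verify feasibility and lower semicontinuity of the objective along the subsequence.

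First I would note that the feasible set of $(\text{\ref{eq:tr}}(\tilde v,\tilde g,\Delta))$ is nonempty because $\tilde v$ itself satisfies both the discreteness constraint and $\|\tilde v - \tilde v\|_{L^1(\Omega)}=0\le\Delta$. Next I would check that the objective is bounded from below: the feasible set is contained in $L^\infty(\Omega)$ uniformly (with bound $\max_i|\nu_i|$), hence bounded in $L^2(\Omega)$; thus $|(\tilde g, v - \tilde v)_{L^2(\Omega)}|$ is bounded by $\|\tilde g\|_{L^2}\|v - \tilde v\|_{L^2}$ uniformly over feasible $v$. Combined with $\alpha\TV(v)\ge 0$ and the constant $-\alpha\TV(\tilde v)$, this gives a lower bound on the objective.

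Now pick a minimizing sequence $(v^n)_n$. Each $v^n$ is bounded in $L^1(\Omega)$ (by the $L^\infty$ bound and $\lambda(\Omega)<\infty$), and since the objective tends to the infimum, $\alpha\TV(v^n)$ is eventually bounded; hence $(v^n)_n$ is bounded in $BV(\Omega)$. By weak-$^*$ compactness of bounded sets in $BV(\Omega)$, extract a subsequence (not relabeled) with $v^n\weakstarto\bar v$ in $BV(\Omega)$ and, in particular, $v^n\to\bar v$ in $L^1(\Omega)$. Lemma \ref{lem:discreteness_feasibility} gives that $\bar v$ is $\{\nu_1,\dots,\nu_M\}$-valued a.e., and the $L^1$-convergence together with continuity of the norm yields $\|\bar v - \tilde v\|_{L^1(\Omega)}=\lim_n\|v^n-\tilde v\|_{L^1(\Omega)}\le\Delta$, so $\bar v$ is feasible for the trust-region subproblem.

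It remains to pass to the limit in the objective. The term $-\alpha\TV(\tilde v)$ is constant. The weak-$^*$ lower semicontinuity of $\TV$ on $BV(\Omega)$ gives $\alpha\TV(\bar v)\le\liminf_n \alpha\TV(v^n)$. For the linear term, note that $v^n\to\bar v$ in $L^1(\Omega)$ and $\|v^n\|_{L^\infty(\Omega)}$ is uniformly bounded, so dominated convergence upgrades this to $v^n\to\bar v$ in $L^2(\Omega)$, whence $(\tilde g, v^n - \tilde v)_{L^2(\Omega)}\to(\tilde g,\bar v - \tilde v)_{L^2(\Omega)}$. Adding these contributions shows that the value at $\bar v$ is at most the infimum, so $\bar v$ is a minimizer. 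The only mildly delicate step is this last one — recognizing that although the problem lives naturally in $L^2(\Omega)$, the uniform $L^\infty$ bound from discreteness lets us convert $L^1$-convergence into $L^2$-convergence and handle the linear functional continuously.
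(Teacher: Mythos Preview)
Your proposal is correct and follows essentially the same direct-method argument as the paper: nonemptiness via $\tilde v$, boundedness of the objective from below, extraction of a weak-$^*$ convergent minimizing subsequence in $BV(\Omega)$, feasibility of the limit via Lemma~\ref{lem:discreteness_feasibility} and continuity of the $L^1$-norm, and lower semicontinuity of the objective using weak-$^*$ lower semicontinuity of $\TV$ together with the upgrade from $L^1$- to $L^2$-convergence for the linear term. If anything, you are slightly more explicit than the paper in verifying the trust-region constraint at the limit.
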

\begin{proof}
It holds that
$\tilde{v} \in \F_{(\text{\ref{eq:tr}}(\tilde{v},\tilde{g},\Delta))}$,
and thus the feasible set is nonempty. The constraints imply
boundedness of the feasible set in $L^1(\Omega)$ and
$L^\infty(\Omega)$. Together with the $\TV(v)$-term, the objective
is bounded below.

We consider a minimizing sequence of $(\text{\ref{eq:tr}}(\tilde{v},\tilde{g},\Delta))$; that is,
$(v^n)_n \subset \F_{(\text{\ref{eq:tr}}(\tilde{v},\tilde{g},\Delta))}$
with corresponding objective values converging to the 
infimal value of $(\text{\ref{eq:tr}}(\tilde{v},\tilde{g},\Delta))$. The $\TV(v)$-term implies boundedness of the sequence $(v^n)_n$
in $BV(\Omega)$ and in turn
weak-$^*$ convergence $v^{n_k} \weakstarto v^*$ in $BV(\Omega)$
of a subsequence $(v^{n_k})_k$ for some limit $v^* \in BV(\Omega)$. 
We use $v^{n_k} \to v^*$ in $L^1(\Omega)$ for a subsequence denoted
by the same symbol and Lemma
\ref{lem:discreteness_feasibility} to deduce that
$v^* \in \F_{(\text{\ref{eq:tr}}(\tilde{v},\tilde{g},\Delta))}$.
With the argument from Proposition \ref{prp:min_existence}
we obtain $v^{n_k} \to v^*$ in $L^2(\Omega)$ as well.
The continuity of $(g, \cdot - \tilde{v})_{L^2(\Omega)}$
and the lower-semicontinuity of $\TV$ imply that
$v^*$ is a minimizer.
\end{proof}

\begin{proposition}\label{prp:tr_convex_sufficient}
Let $F : L^2(\Omega) \to \R$ be convex.
Then, $\tilde{v} \in \F_{\eqref{eq:p}}$
is $r$-optimal for \eqref{eq:p} if the optimal
value of $(\text{\emph{\ref{eq:tr}}}(\tilde{v},\tilde{g},\Delta))$
with $\Delta = r$ and some $\tilde{g} \in \partial F(\tilde{v})$
is nonnegative.
\end{proposition}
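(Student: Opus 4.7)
The plan is to exploit the fact that the feasible set of the trust-region subproblem $(\text{\ref{eq:tr}}(\tilde{v},\tilde{g},r))$ coincides with exactly the competitors that appear in the definition of $r$-optimality, namely functions $v \in \F_{\eqref{eq:p}}$ with $\|v - \tilde{v}\|_{L^1(\Omega)} \le r$. Consequently, the nonnegativity assumption on the optimal value of the subproblem translates directly into an inequality that must hold for every such $v$.

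First, I would fix an arbitrary $v \in \F_{\eqref{eq:p}}$ with $\|v - \tilde{v}\|_{L^1(\Omega)} \le r$. Since this $v$ is feasible for $(\text{\ref{eq:tr}}(\tilde{v},\tilde{g},r))$, the assumed nonnegativity of the optimal value of the trust-region subproblem yields
\[
(\tilde{g}, v - \tilde{v})_{L^2(\Omega)} + \alpha \TV(v) - \alpha \TV(\tilde{v}) \ge 0.
\]
Next, I would invoke the defining subgradient inequality for $\tilde{g} \in \partial F(\tilde{v})$, which is available because $F$ is convex and hence admits a subgradient at $\tilde{v}$; this gives
\[
F(v) - F(\tilde{v}) \ge (\tilde{g}, v - \tilde{v})_{L^2(\Omega)}.
\]
Adding the two inequalities and rearranging yields $F(\tilde{v}) + \alpha \TV(\tilde{v}) \le F(v) + \alpha \TV(v)$, which is the definition of $r$-optimality since $v$ was arbitrary in the $L^1$-ball of radius $r$ intersected with the feasible set of \eqref{eq:p}.

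There is essentially no hard step here; the proposition is a sufficient-condition statement whose content is exactly the convex-linearization bound. The only point worth flagging is the matching of feasible sets: the trust-region subproblem imposes the identical discreteness constraint and $L^1$-radius constraint as the one used in Definition \ref{dfn:roptimal}, and the subproblem has a minimizer (by Proposition \ref{prp:tr_well_defined}), so "the optimal value is nonnegative" is a meaningful hypothesis and not vacuously about an infimum. Note that the proof uses convexity only through the subgradient inequality, so nothing stronger than the subdifferential-based definition of $\tilde{g}$ employed in \eqref{eq:tr} is needed.
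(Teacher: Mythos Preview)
Your proposal is correct and mirrors the paper's own proof essentially line for line: both use the nonnegativity of the trust-region objective at every feasible $v$ to obtain $(\tilde{g}, v-\tilde{v})_{L^2(\Omega)}+\alpha\TV(v)-\alpha\TV(\tilde{v})\ge 0$, then combine this with the subgradient inequality $F(v)-F(\tilde{v})\ge(\tilde{g},v-\tilde{v})_{L^2(\Omega)}$ to conclude $r$-optimality. Your added remarks on the matching of feasible sets and the well-posedness of the optimal value are accurate but not needed for the argument.
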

\begin{proof}
Let the optimal value of $(\text{\ref{eq:tr}}(\tilde{v},\tilde{g},\Delta))$
be nonnegative for $\tilde{g} \in \partial F(v)$ and $\Delta > 0$.
Then it follows that
\[ 0 \le (\tilde{g}, v - \tilde{v})_{L^2(\Omega)}
+ \alpha \TV(v) - \alpha \TV(\tilde{v})\]
for all feasible $v \in \F_{(\text{\ref{eq:tr}}(\tilde{v},\tilde{g},\Delta))}$.
The convexity of $F$ yields
\[ (\tilde{g}, v - \tilde{v})_{L^2(\Omega)} \le F(v) - F(\tilde{v}). \]
Combining both inequalities yields the claim.
\end{proof}

Proposition \ref{prp:tr_convex_sufficient} shows that
if $F$ is convex, then the nonnegativity of the optimal objective
value of \eqref{eq:tr} is a sufficient condition for $r$-optimality.
However, deriving a necessary condition for stationarity is more
involved and will be discussed in section \ref{sec:trm_analysis_1d}.

Next, we introduce an algorithm that solves subproblems of the
form \eqref{eq:tr} to find an optimal solution of \eqref{eq:p}.

\subsection{Algorithm Statement}\label{sec:slip}

Our algorithmic approach for solving \eqref{eq:p} to
$r$-optimality for some $r > 0$ is formalized in Algorithm
\ref{alg:trm}.
The algorithm consists of an outer and an inner loop.
In each outer iteration, the trust-region radius is reset,
and then the inner loop is executed to compute the next iterate.

The inner loop solves the trust-region subproblem \eqref{eq:tr}
for a sequence of shrinking trust-region radii until the
predicted reduction, the negative objective of \eqref{eq:tr},
is less than or equal to zero or a sufficient decrease condition
is met.
If the predicted reduction is less than or equal to zero, 
Proposition \ref{prp:tr_convex_sufficient} implies that the
current iterate $v^{n-1}$ is $r$-optimal with
$r = \Delta^{n,k}$ if $F$ is convex.
Consequently, Algorithm \ref{alg:trm} terminates in this case.

A usual candidate for a sufficient decrease condition
for trust-region problems is the inequality
\begin{gather}\label{eq:1d_accept}
\ared(v^{n-1},\bar{v}^{n,k}) \ge \sigma \pred(v^{n-1},\Delta^{n,k})
\end{gather}
for some $\sigma \in (0,1)$, where
\[ \ared(v^{n-1},\bar{v}^{n,k}) =
F(v^{n-1}) + \alpha \TV(v^{n-1})
- F(\bar{v}^{n,k}) - \alpha\TV(\bar{v}^{n,k})
\]
denotes the actual reduction achieved by the candidate $\bar{v}^{n,k}$
computed by the trust-region subproblem and the predicted reduction
\[ \pred(v^{n-1},\Delta^{n,k})
= (\tilde{g}^{n-1},v^{n-1} - \bar{v}^{n,k})_{L^2(\Omega)}
  + \alpha \TV(v^{n-1}) - \alpha\TV(\bar{v}^{n,k})
\]
is the negative objective of
$(\text{\ref{eq:tr}}(v^{n-1},\tilde{g}^{n-1},\Delta^{n,k}))$.
We show in section \ref{sec:trm_analysis_1d}
for the case $d = 1$ either that the condition \eqref{eq:1d_accept} implies  finite
termination of the inner loop or that $v^{n-1}$ satisfies a necessary optimality
condition. If the inner loop always terminates finitely, the sequence of iterates
has weak-$^*$ accumulation points that all satisfy the necessary optimality condition.

\begin{algorithm}[bht]
\caption{Sequential linear integer programming method (SLIP) to seek $r$-optimal points of \eqref{eq:p}}\label{alg:trm}
\begin{flushleft}
\textbf{Input:} $F$ differentiable, and satisfying
Assumption \ref{ass:f_setting}.

\textbf{Input:} $\Delta^0 > 0$, $v^0 \in \F_{\eqref{eq:p}}$, $\sigma \in (0,1)$.
\end{flushleft}

\begin{algorithmic}[1]
	\For{$n = 1,\ldots$}
	\State $k \gets 0$
	\State $\Delta^{n,0} \gets \Delta^0$
	\Repeat
		\State\label{ln:gtilde} $\tilde{g}^{n-1} \gets$ 
		choose element of $\partial F(v^{n-1})$.
		\State\label{ln:trstep} $\bar{v}^{n,k} \gets$ minimizer of
		$(\text{TR}(v^{n-1},\tilde{g}^{n-1},\Delta^{n,k}))$.
		\State\label{ln:pred} $\pred(v^{n-1},\Delta^{n,k}) \gets
		(\tilde{g}^{n-1}, v^{n-1} - \bar{v}^{n,k})_{L^2(\Omega)}
		         + \alpha \TV(v^{n-1}) - \alpha \TV(\bar{v}^{n,k})$
		\State $\ared(v^{n-1},\tilde{v}^{n,k}) \gets
		         F(v^{n-1}) + \alpha \TV(v^{n-1})
		         - F(\bar{v}^{n,k}) - \alpha\TV(\bar{v}^{n,k})$	
		\If{$\pred(v^{n-1},\Delta^{n,k}) \le 0$}
			\State Terminate. The predicted reduction for $v^{n-1}$
			is zero ($v^{n-1}$ is $\Delta^{n,k}$-optimal if $F$ is convex).
		\ElsIf{$\ared(v^{n-1},\bar{v}^{n,k}) < \sigma \pred(v^{n-1},\Delta^{n,k})$}\label{ln:suffdec}
			\State $k \gets k + 1$		
			\State $\Delta^{n,k} \gets \Delta^{n,k-1} / 2$.
		\Else
			\State $v^n \gets \bar{v}^{n,k}$
			\State $k \gets k + 1$
		\EndIf
	\Until{$\ared(v^{n-1},\bar{v}^{n,k-1}) \ge \sigma \pred(v^{n-1},\Delta^{n,k-1})$
	// sufficient decrease achieved}
	\EndFor
\end{algorithmic}
\end{algorithm}

\begin{remark}
In this work we analyze a function
space algorithm. Specifically,
Line \ref{ln:trstep} cannot
be implemented exactly.
To obtain our numerical results,
we solve a sequence of
finite-dimensional 
subproblems that approximate
the infinite-dimensional subproblems.
\end{remark}

\subsection{Subproblems as Linear Integer Programs for DG0 Control Discretizations}\label{sec:trip}

The feasible control functions can take only
finitely many values and we may restrict our
considerations to a piecewise constant
representative for any $v \in \F_{\eqref{eq:p}}$
because of the choice $\alpha > 0$.
Let $\mathcal{T}$ be a partition of $\Omega$
into finitely many
polytopes of dimension $d$ with the set of interior facets
$\mathcal{E} \subset \mathcal{T}\times \mathcal{T}$.
Let $v \in \F_{\eqref{eq:p}}$ and
$v(x) = \sum_{T\in\mathcal{T}} v_T \chi_{T}(x)$
for a.a.\ $x \in \Omega$ with
$v_T \in \{\nu_1,\ldots,\nu_M\}$, 
where $\chi_T(x)$ is the characteristic function of
$T \in \mathcal{T}$;
that is, $v$ takes the value $v_T$ on grid cell $T \in \mathcal{T}$.
Let $\ell_E = \Ha^{d-1}(E)$
denote the $d-1$-dimensional Hausdorff measure
of facet $E \in \mathcal{E}$. 
We define the selector functions for the grid cells
that are connected by facet $E$ as $T_1$ and $T_2$; that
is, $E = (T_1(E), T_2(E))$.
Moreover, let $[v]_E$ denote the jump height of the function
$v$ across the facet $E$; that is, $[v]_E = v_{T_1(E)} - v_{T_2(E)}$.

With this terminology, we can write the quantities required in \eqref{eq:tr}
as
\begin{align*}
\TV(v) &=
   \sum_{E \in \mathcal{E}} \int_E \left|[v]\right|\dd \Ha^{d-1}
   = \sum_{E \in\mathcal{E}} \ell_E \left|[v]_{E}\right|
   = \sum_{E \in\mathcal{E}} \ell_E \left|v_{T_1(E)} - v_{T_2(E)}\right|,\\
(\tilde{g}, v - \tilde{v})_{L^2(\Omega)}
&= \sum_{T \in \mathcal{T}}(v_T - \tilde{v}_T) \int_{T} \tilde{g}(x) \dd x,\\
\|v - \tilde{v}\|_{L^1(\Omega)} &= \sum_{T\in\mathcal{T}} |v_T - \tilde{v}_T| \lambda(T),
\end{align*}
where $\lambda$ denotes the
Lebesgue measure on $\R^d$
and $\tilde{g} = \nabla F(\tilde{x}_T)$
in the differentiable case.
We use these
expressions and auxiliary variables to transform \eqref{eq:tr}
with the ansatz
$v = \sum_{T\in\mathcal{T}} v_T \chi_{T}$
for a fixed partition $\mathcal{T}$
into the IP

\begin{gather}\label{eq:trip}
(\text{TRIP}(\tilde{v},\tilde{g},\Delta))
\coloneqq
\left\{
\begin{aligned}
\min_{v_T,u_T,w_E,\omega}\ &
     \sum_{T \in \mathcal{T}} c_T (v_T - \tilde{v}_T)
     + \alpha \omega - \alpha \TV(\tilde{v}) \\
\text{s.t.}\ & -u_T \le v_T - \tilde{v}_T\le u_T \text{ for all }
               T\in\mathcal{T}, \\
             & u_T \ge 0 \text{ for all } T \in\mathcal{T},\\
             & {\sum_{T \in \mathcal{T}}} u_T\lambda(T)
               \le \Delta, \\
			 & -w_E \le \tilde{v}_{T_1(E)} - \tilde{v}_{T_2(E)} \le w_E
			   \text{ for all } E \in \mathcal{E}, \\
			 & w_E \ge 0 \text{ for all } E \in \mathcal{E}, \\
			 & {\sum_{E \in \mathcal{E}}} w_E \ell_E \le \omega, \\
             & \tilde{v}_T \in \{\nu_1,\ldots,\nu_M\}
               \text{ for all } T \in\mathcal{T}.
\end{aligned}
\right.
\tag{TRIP}
\end{gather}

Here, the following real-valued scalar
quantities are fixed inputs for the problem
$(\text{TRIP}(v,\tilde{g},\Delta))$.
\begin{enumerate}
\item $\tilde{v}_T \in \{\nu_1,\ldots,\nu_M\}$ for all $T \in \mathcal{T}$.
\item $c_T \coloneqq \int_T \tilde{g}(x)\,\dd x \in \R$
      for all $T \in \mathcal{T}$.
\item $\lambda(T) \ge 0$ for all $T \in \mathcal{T}$.
\item $\alpha > 0$.
\item $\TV(\tilde{v}) \ge 0$.
\item $\Delta \ge 0$.
\item $\ell_E \ge 0$ for all $E \in \mathcal{E}$.
\end{enumerate}

\begin{remark}
Generally, one has no need to solve the 
$(\text{{TRIP}}(\tilde{v},\tilde{g},\Delta))$ exactly.
If the upper bound (incumbent solution) of an iterate in
the IP satisfies the acceptance criterion of the trust-region step,
then the solution algorithm for \eqref{eq:trip} may terminate early.
The efficacy of such a
strategy, however, depends on the costs
to evaluate $F$ or a surrogate with
sufficient accuracy.
\end{remark}

Every implementation of \eqref{eq:trip} requires an additional
approximation step; specifically, the objective coefficient
$c_T = \int_T \tilde{g}(x)\dd x$ requires  numerical
quadrature.
In the case of a uniform grid, where also $\ell_E = \ell_F$ holds for
all $E$, $F \in \mathcal{E}$ (for example, a grid of squares),
the order of the optimality error of the solution of
\eqref{eq:trip} to the solution of (TRIP$^h$), where
the coefficients $c_T$ are replaced by coefficients $c_T^h$
obtained with  quadrature, is of the same order as the quadrature
error.

To see this, let $c_T^h \approx \int_T \tilde{g}(x)\,\dd x$
be such that
\[ c_T (1 - \varepsilon) \le c_T^h \le c_T (1 + \varepsilon) \]
for all grid cells $T \in \mathcal{T}$.
Let $v_*$ denote the solution of \eqref{eq:trip}, and
let $v_*^h$ denote the solution of (TRIP$^h$).
Because $\ell_E$ is constant, the second term of the objective is an integer
multiple of $\alpha\ell_E$.
Therefore, it follows
that $\alpha \omega_*^h = \TV(v^h_*) = \TV(v_*) = \alpha \omega_*$ if $\varepsilon < \frac{\alpha\ell_E}{|\mathcal{T}|\max_{i,j}|\nu_i - \nu_j|}$.
The terms $\sum_T c_T\tilde{v}_T$ as well as $-\alpha \TV(\tilde{v})$ are constant and may be disregarded. Therefore, the objective difference between
the optimal solution with and without discretization of $c_T$ is 
\[ e_h \coloneqq \left|\sum_{T\in\mathcal{T}} c_Tv_{*,T} - \sum_{T\in\mathcal{T}} c_T^h v_{*,T}^h\right|  \]
if $\varepsilon < \frac{\alpha\ell_E}{|\mathcal{T}|\max_{i,j}|\nu_i - \nu_j|}$.
Using the optimality for both problems and the fact that
\[ \left|\sum_{T\in\mathcal{T}}(c_T^h - c_T) v_T\right| \le |\mathcal{T}|\varepsilon \]
implies the bound
\[ e_h \le |\mathcal{T}|\varepsilon \]
on the objective error of \eqref{eq:trip} in this case.
This means that if we want to obtain an objective
error $e_h$ for \eqref{eq:trip},  we need to enforce
an approximation error of $e_h / |\mathcal{T}|$
for the coefficients $c_T^h$.

\begin{remark}
After discretization of the quantities \eqref{eq:trip}
can be solved general purpose IP solvers. Efficient combinatorial
algorithms that exploit the structure of \eqref{eq:trip} are
preferable and subject to ongoing research.
\end{remark}

\section{Analysis of Algorithm \ref{alg:trm} for $d = 1$}\label{sec:trm_analysis_1d}

We analyze Algorithm \ref{alg:trm} in function space for $d = 1$.
Our analysis builds on properties of the $\TV$-term and geometric
constructions that do not generalize easily for $d \ge 2$. We restrict
our analysis to $d = 1$ in this work and comment on arguments that do not
work for $d \ge 2$ in the respective subsections (see, e.g.,\ 
Remarks \ref{rem:dge2_construction_fail} and \ref{rem:step_accept}).

In section \ref{sec:bv} we provide results on functions of bounded 
variation, which we use repeatedly in the remainder.
Then, we derive first-order necessary conditions
for $r$-optimal points in section \ref{sec:first_order},
followed by a sufficient decrease condition and asymptotics
of the inner loop of Algorithm \ref{alg:trm} in
section \ref{sec:suff_dec}. We prove convergence of Algorithm 
\ref{alg:trm} in function space in section \ref{sec:asymptotics}.
Moreover, in section \ref{sec:tr_trip_approx} we show how \eqref{eq:trip} can approximate \eqref{eq:tr}
on uniform grids.

Our analysis requires the assumption below.
\begin{assumption}\label{ass:d1}
Let $F : L^2(\Omega) \to \R$
be twice continuously
Fr\'{e}chet differentiable such that for all $\xi \in L^2(\Omega)$,
the bilinear form induced by the Hessian
\[ \nabla^2 F(\xi) : L^2(\Omega)\times L^2(\Omega) \to \R \]
satisfies
\[ |\nabla^2 F(\xi)(u,w)| \le C\|u\|_{L^1(\Omega)}\|w\|_{L^1(\Omega)} \]
for some $C > 0$ and all $u$, $w \in L^2(\Omega)$.
\end{assumption}
Assumption \ref{ass:d1} ensures that $\nabla F(v) \in L^2(\Omega)$
for all feasible $v$ and therefore Proposition \ref{prp:tr_well_defined}.
Assumption \ref{ass:d1} enforces additional regularity on the term
$F$ in the objective. For example, the bilinear form induced by
the identity $(u,w) \mapsto (u,w)_{L^2(\Omega)}$ does not satisfy
the inequality $(u,w)_{L^2(\Omega)}
\le C\|u\|_{L^1(\Omega)}\|w\|_{L^1(\Omega)}$
for all $u$ and $w$ for any $C > 0$ because $L^1(\Omega)$ is not continuously
embedded into $L^2(\Omega)$. This
means that we restrict to the case
$\tilde{g}^{n-1} = \nabla F(v^{n-1})$
in Line \ref{ln:gtilde} in Algorithm \ref{alg:trm}.
We briefly show that Assumption \ref{ass:d1} nevertheless covers
a large set of problems.
\begin{proposition}\label{prp:d1_satisfied}
Let $z \in L^2(\Omega)$.
If $F(v) \coloneqq \frac{1}{2}\|Kv - z\|_{L^2(\Omega)}^2$
for a bounded linear operator $K : L^1(\Omega) \to L^2(\Omega)$
and all $v \in L^2(\Omega)$, then Assumption \ref{ass:d1} is satisfied.
\end{proposition}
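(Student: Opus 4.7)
The plan is to compute the first and second Fr\'{e}chet derivatives of $F$ explicitly and to observe that the Hessian, which turns out to be independent of the base point, is bounded in the required way simply by Cauchy--Schwarz and the operator norm of $K$. Before doing so, I would note that the boundedness of $\Omega$ yields the continuous embedding $L^2(\Omega) \hookrightarrow L^1(\Omega)$. Composing this embedding with the assumed bounded map $K : L^1(\Omega) \to L^2(\Omega)$ shows that $K$ restricts to a bounded linear map $L^2(\Omega) \to L^2(\Omega)$, so that $F(v) = \tfrac{1}{2}\|Kv-z\|_{L^2(\Omega)}^2$ is a well-defined continuous map $L^2(\Omega) \to \R$.

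Next I would expand directly: for $\xi, u \in L^2(\Omega)$,
\[ F(\xi+u) - F(\xi) = (K\xi - z, Ku)_{L^2(\Omega)} + \tfrac{1}{2}(Ku,Ku)_{L^2(\Omega)}. \]
The linear term defines a bounded linear functional on $L^2(\Omega)$ via $u \mapsto (K\xi - z, Ku)_{L^2(\Omega)}$ (bounded because $K$ is bounded $L^2 \to L^2$), so by Riesz representation there is a $\nabla F(\xi) \in L^2(\Omega)$ representing it, yielding the first Fr\'{e}chet derivative. Differentiating once more produces the bilinear form $\nabla^2 F(\xi)(u,w) = (Ku, Kw)_{L^2(\Omega)}$, which does not depend on $\xi$ and is therefore trivially continuous in $\xi$. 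Hence $F \in C^2(L^2(\Omega),\R)$.

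Finally, for the Hessian bound, by Cauchy--Schwarz in $L^2(\Omega)$ and the assumed boundedness of $K : L^1(\Omega) \to L^2(\Omega)$ with operator norm $\|K\|$,
\[ |\nabla^2 F(\xi)(u,w)| = |(Ku, Kw)_{L^2(\Omega)}| \le \|Ku\|_{L^2(\Omega)}\|Kw\|_{L^2(\Omega)} \le \|K\|^2 \|u\|_{L^1(\Omega)}\|w\|_{L^1(\Omega)}, \]
where I use $L^2(\Omega) \subset L^1(\Omega)$ in the last step to apply the $L^1 \to L^2$ operator bound to $u,w \in L^2(\Omega)$. Setting $C \coloneqq \|K\|^2$ then yields exactly the estimate required in Assumption \ref{ass:d1}.

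There is no genuine obstacle here; the result is essentially a bookkeeping exercise. The only point deserving care is the distinction between $K$ viewed as an operator on $L^1(\Omega)$ and its restriction to $L^2(\Omega)$, which is handled uniformly by the embedding induced by the boundedness of $\Omega$. The mechanism of the estimate is precisely what makes the assumption reasonable: the ``smoothing'' hypothesis that $K$ maps $L^1$ into $L^2$ is what allows one to replace two factors of $\|\cdot\|_{L^2}$ in the naive bound on the Hessian by two factors of $\|\cdot\|_{L^1}$.
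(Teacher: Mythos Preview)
Your proposal is correct and follows essentially the same route as the paper's proof: compute the Hessian as $(Ku,Kw)_{L^2(\Omega)}$, apply Cauchy--Schwarz, and bound each factor by the $L^1\!\to\!L^2$ operator norm of $K$. If anything, you are slightly more thorough than the paper in explicitly justifying well-definedness on $L^2(\Omega)$ via the embedding $L^2(\Omega)\hookrightarrow L^1(\Omega)$ and in verifying the $C^2$ part of the assumption by noting that the Hessian is constant in $\xi$.
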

\begin{proof}
Let $K^*$ denote the adjoint operator of $K$.
A straightforward calculation shows that $\nabla F(v) = K^*(Kv - z)$
for all $v \in L^2(\Omega)$ and that
$\nabla \F^2(\xi)(u,w) = (K^*Ku,w)_{L^2(\Omega)} = (Ku,Kw)_{L^2(\Omega)}$.
Next, the Cauchy--Schwarz inequality yields
\[ |\nabla \F^2(\xi)(u,w)| \le \|Ku\|_{L^2(\Omega)}\|Kw\|_{L^2(\Omega)}
\le \|K\|_{1,2}^2\|u\|_{L^1(\Omega)}\|w\|_{L^1(\Omega)}, \]
where $\|K\|_{1,2} = \sup_{\|u\|_{L^1(\Omega)} \le 1}\|Ku\|_{L^2(\Omega)}$,
which is finite by assumption. This shows the claim.
\end{proof}

\begin{remark}
Many integral operators and solution operators
of differential equations have the required regularity. In particular, our
computational example satisfies the prerequisites of Proposition
\ref{prp:d1_satisfied}, as is shown in section \ref{sec:experiments}.
\end{remark}

Our main results on the asymptotics of Algorithm
\ref{alg:trm} are shown under
Assumptions \ref{ass:f_setting} and \ref{ass:d1}.

\subsection{Preliminary Results on Functions of Bounded Variation}\label{sec:bv}

Feasible points for \eqref{eq:p} with finite objective value
are functions of bounded variation that may attain only finitely
many different values. Thus, they are also so-called $SBV$ functions
($S$ for special). In particular, their derivatives are absolutely
continuous measures with respect to $\Ha^{d-1}$ 
\cite{ambrosio2000functions}.

For $d = 1$, that is, if $\Omega = (t_0,t_f)$
for $t_0$, $t_f \in \R$, the $\TV$ term can be analyzed and characterized
with the help of the analysis of the so-called pointwise variation
\cite[Sect,\,3.2]{ambrosio2000functions}. For $t \coloneqq (t_1,\ldots,t_{N-1})^T$ with $t_i \le t_{i+1}$ for all $i \in \{0,\ldots,N-1\}$
and the choice $t_N \coloneqq t_f$ as well as
$a \coloneqq (a_1,\ldots,a_N)^T$ we introduce the notation
\begin{gather}\label{eq:step_representation_def}
v^{t,a} \coloneqq \chi_{(t_0,t_1)} a_1 + \sum_{i=1}^{N-1} \chi_{[t_i,t_{i+1})} a_{i+1}.
\end{gather}
We summarize the relationship between $v$ and $v^{t,a}$
in the following proposition.
\begin{proposition}\label{prp:vta_representation}
Let $v \in \F_{\eqref{eq:p}} \cap BV((t_0,t_f))$. Then there exist
$t \in \R^{N-1}$ and
$a = (a_1,\ldots,a_N)^T \in \{\nu_1,\ldots,\nu_M\}^N$
with $t_0 < \ldots < t_N = t_f$ and $a_i \neq a_{i+1}$
for all $i \in \{1,\ldots,N-1\}$ such that $v = v^{t,a}$.
\end{proposition}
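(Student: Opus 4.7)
The plan is to pass to a good (say left-continuous) representative $\tilde v$ of $v$ on the interval $(t_0,t_f)$, which exists for every one-dimensional $BV$ function, and then to exploit the discreteness of the range $\{\nu_1,\ldots,\nu_M\}$ to argue that $\tilde v$ has only finitely many jump discontinuities. Since $\tilde v(x)\in\{\nu_1,\ldots,\nu_M\}$ for a.a.\ $x$ and the one-sided limits of a $BV$ function can be computed as essential limits, the left and right limits of $\tilde v$ at every point also lie in $\{\nu_1,\ldots,\nu_M\}$. Hence any discontinuity point is a jump between two distinct values and contributes at least $\delta\coloneqq\min_{i\ne j}|\nu_i-\nu_j|>0$ to the pointwise variation.

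The key step is to use the classical identification in Sect.\,3.2 of \cite{ambrosio2000functions} of $\TV(\tilde v)$ with the pointwise variation of $\tilde v$ on $(t_0,t_f)$. Summing the jump contributions over any finite set of discontinuities yields an a priori bound on the number of jumps by $\TV(v)/\delta<\infty$, so the set of jump points is finite; call these points $t_0<s_1<\dots<s_{N'-1}<t_f$ and set $s_0\coloneqq t_0$, $s_{N'}\coloneqq t_f$. On each open subinterval $(s_i,s_{i+1})$ the function $\tilde v$ has no discontinuity and its distributional derivative is a measure supported on the jump set, hence $\tilde v$ is constant on $(s_i,s_{i+1})$, equal to some $b_{i+1}\in\{\nu_1,\ldots,\nu_M\}$. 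This produces a step function representation of $\tilde v$ of the form \eqref{eq:step_representation_def}.

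To obtain the required sequence with $a_i\ne a_{i+1}$, I would merge consecutive intervals whenever $b_{i+1}=b_{i+2}$. This removes the corresponding breakpoint from the list and terminates after finitely many steps because $N'$ is finite; the relabelled breakpoints $t_0<t_1<\dots<t_N=t_f$ and values $a=(a_1,\ldots,a_N)^T$ then satisfy the claim, and $v=v^{t,a}$ almost everywhere since $\tilde v=v$ a.e.

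The main obstacle is the bookkeeping with representatives: the equality $v=v^{t,a}$ must be interpreted in the a.e.\ sense in which elements of $L^1$-based spaces are compared, and one has to verify that working with a left-continuous representative does not affect either the a.e.\ equivalence class of $v$ or the value of $\TV(v)$, and that the set of ``jumps'' of the representative agrees with the support of the singular part of the $BV$ derivative. Once this is aligned with the pointwise variation framework of \cite{ambrosio2000functions}, the discreteness of the range turns what would otherwise be a delicate measure-theoretic argument into the elementary counting estimate above.
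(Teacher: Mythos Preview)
Your proposal is correct and follows essentially the same approach as the paper: both appeal to the pointwise-variation framework of \cite[Sect.\,3.2]{ambrosio2000functions} to obtain a finite step-function representation of $v$, and then merge consecutive intervals with equal values to enforce $a_i\neq a_{i+1}$. The paper simply cites the reference for the existence of the step representation, whereas you spell out the underlying argument (good representative, minimum jump height $\delta$, bound on the number of jumps by $\TV(v)/\delta$), but the route is the same.
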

\begin{proof}
The claim follows from the analysis of the so-called pointwise variation
\cite[Sect,\,3.2]{ambrosio2000functions}.
In particular, for $v \in \F_{\eqref{eq:p}} \cap BV((t_0,t_f))$, there exist
$N \in \N$, $t_0 < \ldots < t_N = t_f$
and $a_1$, $\ldots$, $a_N \in \{\nu_1,\ldots,\nu_M\}$ such that
\begin{align}\label{eq:step_representation}
v &= \chi_{(t_0,t_1)} a_1 + \sum_{i=1}^{N-1} \chi_{[t_i,t_{i+1})} a_{i+1},
\text{ and}\\ 
\label{eq:step_representation_tv}
 \TV(v) &= \sum_{i=1}^{N-1} |a_{i+1} - a_i|
 \le (N - 1)(\nu_M - \nu_1) < \infty.
\end{align}
By dropping $t_i$ and $a_i$ from the vectors $t$ and $a$ if $a_i = a_{i+1}$
we obtain that $a_i \neq a_{i+1}$ for all $i \in \{1,\ldots,N-1\}$.
\end{proof}
Because $a_i \in \{\nu_1,\ldots,\nu_M\}$
for all $i \in \{1,\ldots,N\}$, it holds that
$|a_{i+1} - a_i| \in \{0,\ldots,\nu_M - \nu_1\} \subset \N$.
This implies that for
$v$, $\tilde{v} \in \F_{\eqref{eq:p}} \cap BV((t_0,t_f))$, it holds that
\begin{gather}\label{eq:tv_diff_inz_1d}
\TV(\tilde{v}) - \TV(v) \in \Z.
\end{gather}
Note that \eqref{eq:tv_diff_inz_1d} is generally invalid
for the multidimensional case $d > 2$.

\subsection{First-Order Necessary Conditions for $r$-Optimal Points}\label{sec:first_order}

As we will see below, optimality for the trust-region problem \eqref{eq:tr}
alone is not sufficient to characterize limit points of the iterates produced
by Algorithm \ref{alg:trm} because we cannot exclude the case that the
trust-region radius contracts to zero.
This  is due to the fact that the solution of \eqref{eq:tr} is constrained by
$v(x) \in \{\nu_1,\ldots,\nu_M\}$ a.e., which prevents
us from proving that for $\Delta$ sufficiently small the improvement in the
objective of the trust-region subproblem is bounded from below
by a fraction of $\|\nabla F(v)\|_{L^2(\Omega)}$ and $\Delta$.

Next, we introduce an auxiliary optimization problem, which
is used in our analysis of Algorithm \ref{alg:trm} and
\eqref{eq:tr} and \eqref{eq:p}. In particular, the auxiliary
optimization problem
allows us to state a first-order necessary condition for $r$-optimal
points of \eqref{eq:p}, which we will show is satisfied by the limit
points of Algorithm \ref{alg:trm}.
It builds on the step function representation for
$v \in \F_{\eqref{eq:p}} \cap BV((t_0,t_f))$
given by Proposition \ref{prp:vta_representation}.
Let $a_1,\ldots,a_N \in \{\nu_1,\ldots,\nu_M\}$ be given
such that $a_i \neq a_{i+1}$ for all $i \in \{1,\ldots,N-1\}$.
Then, we seek for the solution of the switching location \eqref{eq:sl}
problem
\begin{gather}\label{eq:sl}
\begin{aligned}
\min_{t_1,\ldots,t_{N-1}}\ & F(v^{t,a})\\
\text{ s.t.\ }&  v^{t,a} = \chi_{(t_0,t_1)} a_1 +
                     \sum_{i=1}^{N-1} \chi_{[t_i,t_{i+1})}a_{i+1},\\
              & t_{i-1} \le t_i \text{ for all } i \in \{1,\ldots,N\}.
\end{aligned}\tag{L}
\end{gather}
The problem \eqref{eq:sl} seeks for the optimum switching locations $t_i$
along the interval $(t_0,t_f)$ such that $v^{t,a}$ attains the
values $a_i$ in the given order. At switching location $t_i$, the function
$v^{t,a}$ switches from value $a_i$ to $a_{i+1}$. The constraint formulation
$t_{i-1} \le t_i$ allows the solution $v^{t,a}$ to actually have fewer
switches than given and skip intermediate $a_j$ by choosing $t_{j-1} = t_j$.

\begin{remark}
We use the condition $a_i \neq a_{i+1}$ for all $i \in \{1,\ldots,N-1\}$
to avoid redundancy in the formulation, but this assumption is
without loss of generality (merge subsequent intervals with coinciding
step heights) and does not affect
the representable functions $v$.
\end{remark}

We highlight that \eqref{eq:sl} is a nonlinear program and is,
in general, nonconvex, even if $F$ is convex.
In addition, differentiability of the objective
with respect to the optimization variables of \eqref{eq:sl}
is also not available in the classical sense, which requires
extra care when deriving  necessary optimality conditions
for \eqref{eq:sl} and \eqref{eq:p} below.

Feasible points of \eqref{eq:sl} may be considered as a subset of feasible 
points of \eqref{eq:p}, and \eqref{eq:sl} admits a minimizer, which
is shown in Proposition \ref{prp:sl_necessary}.
\begin{proposition}\label{prp:sl_necessary}
Let Assumptions \ref{ass:f_setting} hold.
Let $a_1$,$\ldots$,$a_N \in \{\nu_1,\ldots,\nu_M\}$
with $a_i \neq a_{i+1}$ for $i \in \{1,\ldots,N-1\}$ be given.
Then,
\begin{enumerate}
\item for any feasible $t \in \F_{\eqref{eq:sl}}$, it follows
that $v^{t,a} \in \F_{\eqref{eq:p}} \cap BV((t_0,t_f))$, and
\item problem \eqref{eq:sl} admits a minimizer.
\end{enumerate}
\end{proposition}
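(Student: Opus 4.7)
The plan is to prove the two parts in turn, with part 1 being essentially a bookkeeping exercise and part 2 following the direct method of the calculus of variations restricted to the finite-dimensional parameter space.

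For part 1, I would observe that for any feasible $t \in \F_{\eqref{eq:sl}}$, the function $v^{t,a}$ defined by \eqref{eq:step_representation_def} is a piecewise constant function that takes values in $\{a_1,\ldots,a_N\} \subset \{\nu_1,\ldots,\nu_M\}$. Hence it is $\{\nu_1,\ldots,\nu_M\}$-valued almost everywhere, bounded, and supported on the bounded interval $(t_0,t_f)$, so it belongs to $L^2((t_0,t_f))$. Moreover, it has at most $N-1$ jumps, and by the computation in \eqref{eq:step_representation_tv} its total variation is bounded by $(N-1)(\nu_M - \nu_1)$, so it lies in $BV((t_0,t_f))$. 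Combined, this gives $v^{t,a} \in \F_{\eqref{eq:p}} \cap BV((t_0,t_f))$.

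For part 2, I would apply the direct method. First, I would identify the feasible set $K \coloneqq \{t \in \R^{N-1} \,:\, t_0 \le t_1 \le \cdots \le t_{N-1} \le t_f\}$ as a closed and bounded, hence compact, subset of $\R^{N-1}$. Next, I would show that the map $\Phi : K \to L^2((t_0,t_f))$, $t \mapsto v^{t,a}$, is continuous: given $t^k \to t$ in $K$, the functions $v^{t^k,a}$ are uniformly bounded by $\max_i |\nu_i|$ and converge pointwise to $v^{t,a}$ everywhere except at the finite set $\{t_1,\ldots,t_{N-1}\}$, so Lebesgue's dominated convergence theorem yields $v^{t^k,a} \to v^{t,a}$ in $L^p((t_0,t_f))$ for every $p \in [1,\infty)$, in particular in $L^2$. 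Composing with the lower semicontinuity of $F$ from Assumption \ref{ass:f_setting}\eqref{asi:Flsc}, the map $t \mapsto F(v^{t,a})$ is lower semicontinuous on the compact set $K$ and bounded from below by Assumption \ref{ass:f_setting}\eqref{asi:Fbnd}, and therefore attains its infimum on $K$.

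The only subtlety worth flagging is the behavior of $v^{t^k,a}$ at points where several $t^k_i$ collide in the limit: if $t^k_{i-1} \le t^k_i$ but $t_{i-1} = t_i$, then the corresponding interval degenerates in the limit, but its contribution to any $L^p$-norm vanishes because its Lebesgue measure tends to zero and the integrand is uniformly bounded. This is exactly what dominated convergence handles cleanly, so no substantial obstacle arises. Everything else is routine.
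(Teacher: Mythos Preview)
Your proposal is correct and follows essentially the same approach as the paper: part 1 is handled by the finite-jump estimate \eqref{eq:step_representation_tv}, and part 2 is the direct method on the compact parameter set $K \subset \R^{N-1}$, using that $t \mapsto v^{t,a}$ is continuous into $L^1$ (hence $L^2$) and that $F$ is lower semicontinuous. The paper phrases part 2 in terms of a minimizing sequence and a convergent subsequence rather than invoking ``lsc on compact attains its infimum'' directly, but this is the same argument, and your treatment of degenerating intervals via dominated convergence matches the paper's use of $\chi_{[t^n_{i-1},t^n_i)} \to \chi_{[t_{i-1},t_i)}$ in $L^1$.
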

\begin{proof}
The first claim is immediate by virtue of \eqref{eq:step_representation_tv}
and $a_i \in \{\nu_1,\ldots,\nu_M\} \subset \Z$ for all
$i \in \{1,\ldots,M\}$.
We consider a minimizing sequence $(t^n)_n \in \F_{\eqref{eq:sl}}$
of switch locations for \eqref{eq:sl}.
Because $(t^n)_n \subset \F_{\eqref{eq:sl}}$ is bounded, it has a
cluster point;
and we reduce to $t^n \to t$ (after potentially passing to a subsequence).
Clearly, $\TV(v^{s,a}) \le \sum_{i=1}^{N-1} |a_{i+1} - a_i|$ holds for all
$s \in \F_{\eqref{eq:sl}}$, which implies that $(v^{t^n,a})_n$ is
bounded in $BV((t_0,t_f))$. The coordinate-wise convergence $t^n \to t$
implies that $\chi_{[t^n_{i-1},t^n_i)} \to \chi_{[t_{i-1},t_i)}$
in $L^1((t_0,t_f))$ for all $i \in \{1,\ldots,N\}$, which implies
$v^{t^n,a} \to v^{t,a}$ in $L^1((t_0,t_f))$ and in turn in $L^2((t_0,t_f))$.
The lower semi-continuity of $F$ implies
$F(v^{t,a}) \le \liminf_{n\to\infty} F(v^{t_n,a})$,
and consequently $t$ minimizes \eqref{eq:sl}, which proves the second claim.
\end{proof}

\begin{remark}
Because minimizers of \eqref{eq:tr} and \eqref{eq:p} have
finite total variation, the representation of Proposition 
\ref{prp:vta_representation} applies.
\end{remark}

As noted above, the first-order optimality condition for
\eqref{eq:sl} exhibits a subtlety. If we assume that
$\nabla F(v) \in C([t_0,t_f])$ for all $v$,
then the duality between $C([t_0,t_f])$
and the space of Radon measures ensures that differentiation of the
objective with respect to the vector $(t_1,\ldots,t_{N-1}) \in \R^{N-1}$
for $t_1 < \ldots < t_{N-1}$ is well defined. Moreover, using the
chain rule, we obtain the first-order condition
\[ \nabla F(v)(t_i) = 0 
\text{ for } i \in \{1,\ldots,N-1\}.\]

However, $\nabla F(v)$ is not continuous in general
but only an $L^2$-function instead. To provide a first-order condition,
we consider so-called Dini derivatives \cite{giorgi1992dini}.
They allow us to deduce information
on $\nabla F(v)$  in a neighborhood of $t_i$ if the
expression $\nabla F(v)(t_i)$ is not defined.

Let $v \in \F_{\eqref{eq:p}}$, and let $t_i$ be a switching location
of $v$; that is,
$\lim_{t\uparrow t_i}v(t) - \lim_{t\downarrow t_i}v(t)
\in \{\nu_i - \nu_j\,|\, i,j \in \{1,\ldots,M\}\} \setminus\{0\}$.
We define $v_i^- \coloneqq \lim_{t \uparrow t_i} v(t)$,
$v_i^+ \coloneqq \lim_{t \downarrow t_i} v(t)$. 
We consider the Dini derivatives of
the absolutely continuous function
$t \mapsto \int_{t_i}^{t} \nabla F(v)(s)\,\dd s$
at $t_i$:
\begin{align*}
	\overline{D_{i}^+} (\nabla F(v))
	&\coloneqq
	 \limsup_{h \downarrow 0}
	 \frac{1}{h}\int_{t_i}^{t_i+h}
	 \nabla F(v)(s)\,\dd s,\\
	\overline{D_{i}^-} (\nabla F(v))
	&\coloneqq
	 \limsup_{h \downarrow 0}
	 \frac{1}{h}\int_{t_i - h}^{t_i}
	 \nabla F(v)(s)\,\dd s,\\	 
	\underline{D_{i}^+} (\nabla F(v))
	&\coloneqq
	 \liminf_{h \downarrow 0}
	 \frac{1}{h}\int_{t_i}^{t_i+h}
	 \nabla F(v)(s)\,\dd s,\\	 
	\underline{D_{i}^-} (\nabla F(v))	 
	&\coloneqq
	 \liminf_{h \downarrow 0}
	 \frac{1}{h}\int_{t_i - h}^{t_i}
	 \nabla F(v)(s)\,\dd s.
\end{align*} 
We note that if $t_i$ is in the Lebesgue set of $\nabla F(v)$,
all four terms coincide with $\nabla F(v)(t_i)$.
 
\begin{lemma}\label{lem:diniderivative_decrease_step}
Let $v \in \F_{\eqref{eq:p}}$, and let $t_i$ be a switching location
of $v$. Let $v_i^- < v_i^+$ and
$\underline{D^+_i}(\nabla F(v)) > 0$
or
$\overline{D^-_i}(\nabla F(v)) < 0$,
or let $v_i^+ < v_i^-$ and
$\underline{D^-_i}(\nabla F(v)) > 0$
or $\overline{D^+_i}(\nabla F(v)) < 0$.
Then there exist
$\varepsilon > 0$ and $h_0 > 0$ such that for all $h \le h_0$
there exists $d^h \in L^1((t_0,t_f))$ with $\|d^h\|_{L^1((t_0,t_f))}
= h$, $v + d^h \in \F_{\eqref{eq:p}}$
such that $(\nabla F(v), d^h)_{L^2(\Omega)} \le -\frac{\varepsilon}{2}h$.
\end{lemma}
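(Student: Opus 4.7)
The plan is to construct $d^h$ by perturbing $v$ in a small one-sided neighborhood of the switch $t_i$ so as to \emph{move the switching point} slightly to the left or to the right. Since $v$ is a step function by Proposition \ref{prp:vta_representation}, there exists $\eta_0 > 0$ such that $v \equiv v_i^-$ on $(t_i - \eta_0, t_i)$ and $v \equiv v_i^+$ on $(t_i, t_i + \eta_0)$. For $\delta \in (0,\eta_0]$, setting $v + d^h$ equal to $v_i^+$ on $(t_i-\delta, t_i)$ (shifting the switch left) or equal to $v_i^-$ on $(t_i, t_i+\delta)$ (shifting the switch right) produces another $\{\nu_1,\ldots,\nu_M\}$-valued step function, so $v + d^h \in \F_{\eqref{eq:p}}$ is automatic from local constancy. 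The $L^1$-norm constraint fixes $\delta = h/|v_i^+ - v_i^-|$.

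I would treat the four sub-cases separately; they are symmetric, so it suffices to illustrate the one with $v_i^- < v_i^+$ and $\varepsilon \coloneqq \underline{D_i^+}(\nabla F(v)) > 0$. By definition of $\liminf$, there exists $h_1 > 0$ such that
\[
\frac{1}{\delta}\int_{t_i}^{t_i+\delta} \nabla F(v)(s)\,\dd s \ge \frac{\varepsilon}{2}\qquad \text{for all } \delta \in (0, h_1].
\]
Setting $h_0 \coloneqq (v_i^+ - v_i^-)\min\{h_1, \eta_0\}$ and, for $h \le h_0$, $\delta \coloneqq h/(v_i^+ - v_i^-)$, I define $d^h \coloneqq (v_i^- - v_i^+)\chi_{(t_i, t_i+\delta)}$. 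Then $\|d^h\|_{L^1((t_0,t_f))} = (v_i^+ - v_i^-)\delta = h$, feasibility holds by the argument above, and
\[
(\nabla F(v), d^h)_{L^2(\Omega)} = -(v_i^+ - v_i^-)\int_{t_i}^{t_i+\delta}\nabla F(v)(s)\,\dd s \le -(v_i^+ - v_i^-)\frac{\varepsilon}{2}\delta = -\frac{\varepsilon}{2}h.
\]
The case $v_i^- < v_i^+$ with $\overline{D_i^-}(\nabla F(v)) < 0$ is handled analogously by instead placing the perturbation on $(t_i-\delta, t_i)$ with $d^h = (v_i^+ - v_i^-)\chi_{(t_i - \delta, t_i)}$ and $\varepsilon \coloneqq -\overline{D_i^-}(\nabla F(v))$; here one uses that $\limsup < 0$ yields the pointwise upper bound $\frac{1}{\delta}\int_{t_i - \delta}^{t_i}\nabla F(v) \le -\varepsilon/2$ for all small $\delta$. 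The remaining two cases with $v_i^+ < v_i^-$ are mirror images and produce $d^h$ of the opposite sign.

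The only delicate point is to recognize that the four hypotheses in the statement are precisely the ones that make each of the four one-sided perturbation directions descent directions, and that the choice of one-sided Dini derivative (liminf for $\underline{D}$, limsup for $\overline{D}$) is the correct one to extract a uniform lower or upper bound on the averaged integral of $\nabla F(v)$ over small one-sided intervals — a pointwise value $\nabla F(v)(t_i)$ being unavailable since $\nabla F(v)$ lies only in $L^2$. All other steps — feasibility, the $L^1$-norm identity, and the descent estimate — follow immediately from the step-function structure of $v$ and the definition of the Dini derivative.
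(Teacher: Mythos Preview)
Your proposal is correct and follows essentially the same construction as the paper's proof: a one-sided perturbation at $t_i$ that shifts the switch location, with the four sub-cases handled symmetrically. You are in fact slightly more careful with the normalization --- you scale the support by $\delta = h/|v_i^+ - v_i^-|$ so that $\|d^h\|_{L^1} = h$ exactly, whereas the paper sets $d^h = (v_i^- - v_i^+)\chi_{[t_i,t_i+h)}$, which gives $\|d^h\|_{L^1} = h|v_i^+ - v_i^-|$ (the descent bound $\le -\tfrac{\varepsilon}{2}h$ still follows there via $|v_i^+ - v_i^-| \ge 1$, but your version matches the lemma's $L^1$-norm claim on the nose).
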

\begin{proof}
\textbf{Case 1: $v_i^- < v_i^+$.}
If $\underline{D_{i}^+} (\nabla F(v)) > \varepsilon$ holds
for some $\varepsilon > 0$, then there exists
$h_0 > 0$ such that for all $h \le h_0$ it follows that
\begin{align*}
 \int_{t_i}^{t_i + h}
     \nabla F(v)(s)(v^-_i - v(s))\,\dd s
   &= \int_{t_i}^{t_i + h} \nabla F(v)(s)(v^-_i - v(s))\,\dd s\\
   &= (v^-_i - v^+_i)\int_{t_i}^{t_i + h} \nabla F(v)(s)\,\dd s\\
   &< -\frac{\varepsilon}{2} h.
\end{align*}
We choose the function
$d^h \coloneqq \chi_{[t_i,t_i+h)} (v_i^- - v_i^+)$.
Similarly, if $\overline{D_{i}^-} (\nabla F(v)) < -\varepsilon$ holds
for some $\varepsilon > 0$, then
there exists $h_0 > 0$ such that for all $h \le h_0$ it follows that
\begin{align*}
 \int_{t_i - h}^{t_i}
     \nabla F(v)(s)(v^+_i - v(s))\,\dd s
   &< -\frac{\varepsilon}{2}h.
\end{align*}
We choose the function $d^h \coloneqq \chi_{[t_i-h,t_i)} (v_i^+ - v_i^-)$.

\textbf{Case 2: $v_i^+ < v_i^-$.}
If $\overline{D_{i}^+} (\nabla F(v)) < -\varepsilon$ holds
for some $\varepsilon > 0$, then
there exists $h_0 > 0$ such that for all $h \le h_0$ it follows that
\begin{align*}
 \int_{t_i}^{t_i + h}
     \nabla F(v)(s)(v^-_i - v(s))\,\dd s
   &< -\frac{\varepsilon}{2} h.
\end{align*}
We choose the function
$d^h \coloneqq \chi_{[t_i,t_i+h)} (v_i^- - v_i^+)$.
Similarly, if
$\underline{D_{i}^-} (\nabla F(v)) > \varepsilon$ holds
for some $\varepsilon > 0$,
then there exists $h_0 > 0$ such that for all $h \le h_0$
it holds that
\begin{align*}
 \int_{t_i - h}^{t_i}
     \nabla F(v)(s)(v^+_i - v(s))\,\dd s
   &< -\frac{\varepsilon}{2} h.
\end{align*}
We choose the function $d^h \coloneqq \chi_{[t_i-h,t_i)} (v_i^+ - v_i^-)$.
\end{proof}

Lemma \ref{lem:diniderivative_decrease_step} yields a necessary
stationarity condition for local minimizers
of \eqref{eq:sl},
which we define as follows.

\begin{definition}\label{dfn:sl_stationary}
Let $v \in \F_{\eqref{eq:p}} \cap BV((t_0,t_f))$
with representation $v = v^{t,a}$ for $t$ and $a$ as given
by Proposition \ref{prp:vta_representation}.
Let the following conditions hold:
\begin{enumerate}
\setlength\itemsep{.5em}
\item $\overline{D^-_i}(\nabla F(v^{t,a}))
       \ge 0 \ge \underline{D_i^+}(\nabla F(v^{t,a}))$
       if $a_i < a_{i+1}$, and
\item $\underline{D^-_i}(\nabla F(v^{t,a}))
       \le 0 \le \overline{D_i^+}(\nabla F(v^{t,a}))$ if $a_{i+1} < a_i$.
\end{enumerate}
Then we say that $v^{t,a}$ is \ref{eq:sl}-stationary.
\end{definition}

Lemma \ref{lem:diniderivative_decrease_step} implies that
\ref{eq:sl}-stationarity is a necessary condition for local minimizers
of \eqref{eq:sl} under Assumption \ref{ass:d1},
which is proven below.

\begin{lemma}\label{lem:sl_1storder_dini}
Let Assumption \ref{ass:d1} hold.
Let $a = (a_1,\ldots,a_N)^T \in \{\nu_1,\ldots,\nu_M\}^N$
with $a_i \neq a_{i+1}$ for $i \in \{1,\ldots,N-1\}$ be given.
Let $t$ be a local minimizer for \eqref{eq:sl}.
Then $v^{t,a}$ is \ref{eq:sl}-stationary.
\end{lemma}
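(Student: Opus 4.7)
I would argue by contradiction. Suppose $v^{t,a}$ is not \ref{eq:sl}-stationary. Then, applying Definition \ref{dfn:sl_stationary} to the minimal representation $v^{t,a} = v^{\tilde t,\tilde a}$ supplied by Proposition \ref{prp:vta_representation}, one of the four Dini-derivative inequalities fails at some switching location $\tau = \tilde t_k$ of $v^{t,a}$. This is precisely the hypothesis of Lemma \ref{lem:diniderivative_decrease_step}: I apply it at $\tau$ to obtain, for each sufficiently small $h > 0$, a perturbation $d^h \in L^1((t_0,t_f))$ supported on $[\tau,\tau+h)$ or $[\tau-h,\tau)$, with $\|d^h\|_{L^1((t_0,t_f))} = h$, $v^{t,a} + d^h \in \F_{\eqref{eq:p}}$, and
\begin{equation*}
(\nabla F(v^{t,a}), d^h)_{L^2(\Omega)} \le -\tfrac{\varepsilon}{2}\, h
\end{equation*}
for some $\varepsilon > 0$ that is independent of $h$.

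\textbf{Turning first-order descent into an actual decrease and realising it as a shift of $t$.} A second-order Taylor expansion of $F$ around $v^{t,a}$, valid by the twice continuous Fr\'echet differentiability required in Assumption \ref{ass:d1}, gives some $\xi$ on the segment joining $v^{t,a}$ and $v^{t,a} + d^h$ with
\begin{equation*}
F(v^{t,a} + d^h) - F(v^{t,a}) = (\nabla F(v^{t,a}), d^h)_{L^2(\Omega)} + \tfrac{1}{2}\nabla^2 F(\xi)(d^h, d^h).
\end{equation*}
By Assumption \ref{ass:d1} the quadratic term is bounded by $\tfrac{C}{2}\|d^h\|_{L^1(\Omega)}^2 = \tfrac{C}{2}h^2$, so combining with the previous display yields $F(v^{t,a}+d^h) - F(v^{t,a}) \le -\tfrac{\varepsilon}{2}h + \tfrac{C}{2}h^2 < 0$ for all $h < \varepsilon/C$. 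It is essential that Assumption \ref{ass:d1} estimates the Hessian by the $L^1$-norm, because $\|d^h\|_{L^2(\Omega)}^2$ is only of order $h$ and would therefore not dominate the linear term. It then remains to write $v^{t,a}+d^h = v^{t',a}$ for some $t' \in \F_{\eqref{eq:sl}}$ with $\|t'-t\|_\infty = h$: since $d^h$ converts a small interval adjacent to $\tau$ from value $\tilde a_{k+1}$ into value $\tilde a_k$ (or vice versa), its effect is to shift $\tau$ by $\pm h$, which I realise by shifting the single coordinate $t_j$ (or, in the degenerate case, the entire block of coincident coordinates $t_{j_0} = \ldots = t_{j_1}$) of the original vector $t$ that encodes $\tau$ by the same signed amount, leaving all other coordinates fixed. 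For $h$ small enough this preserves the monotonicity $t'_{i-1} \le t'_i$ of \eqref{eq:sl}, so $t' \in \F_{\eqref{eq:sl}}$ and $F(v^{t',a}) < F(v^{t,a})$ with $t'$ arbitrarily close to $t$ contradicts the assumed local optimality of $t$.

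\textbf{Main obstacle.} The analytic part is straightforward: the Dini-descent lemma delivers a linear decrease, and the quadratic bound from Assumption \ref{ass:d1} immediately dominates the second-order remainder because of the favorable $L^1$-dependence. The one place that needs combinatorial care is the identification $v^{t,a}+d^h = v^{t',a}$ when the given parametrisation $t$ contains coincident coordinates that skip intermediate $a_j$, so that a perturbation of the minimal-representation switching location $\tau$ translates to a genuinely small and feasible perturbation of the possibly higher-dimensional vector $t$ in the \eqref{eq:sl} parametrisation. Once that bookkeeping is in place, the contradiction closes the proof.
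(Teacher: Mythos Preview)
Your proposal is correct and follows essentially the same approach as the paper: argue by contraposition, invoke Lemma~\ref{lem:diniderivative_decrease_step} at a switching location where a Dini inequality fails, apply the second-order Taylor expansion together with the $L^1$--Hessian bound from Assumption~\ref{ass:d1} to turn the linear decrease into an actual decrease of $F$, and then identify $v^{t,a}+d^h$ with a feasible nearby point $v^{t',a}$ of \eqref{eq:sl}. Your additional bookkeeping for the case of coincident coordinates in $t$ (shifting an entire block $t_{j_0}=\cdots=t_{j_1}$) is a detail the paper's proof elides by tacitly working as if $t_{i-1}<t_i<t_{i+1}$ at the relevant index.
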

\begin{proof}
We prove the claim with a contrapositive argument. To this end,
let $v^{t,a}$ not be \ref{eq:sl}-stationary.
Then there exists a switching location $t_i$ for which the prerequisites
of Lemma \ref{lem:diniderivative_decrease_step} are satisfied, and we consider
$\varepsilon > 0$ and $d^h$ as asserted by Lemma 
\ref{lem:diniderivative_decrease_step}.
We apply Taylor's theorem (see Proposition \ref{prp:lin_approx}) and obtain
that for some $\xi^h$ in the line segment between $v^{t,a}$ and $v^{t,a} + d^h$
it holds that
\begin{align*}
F(v^{t,a} + d^h) &= F(v^{t,a}) + (\nabla F(v^{t,a}), d^h)_{L^2(\Omega)} + \frac{1}{2}F(\xi^h)(d^h,d^h).
\end{align*}
We use Assumption \ref{ass:d1} to deduce
\[ F(v^{t,a} + d^h) - F(v^{t,a}) \le (\nabla F(v^{t,a}), d^h)_{L^2(\Omega)}
 + C\|d^h\|_{L^1(\Omega)}\|d^h\|_{L^1(\Omega)}
 = (\nabla F(v^{t,a}), d^h)_{L^2(\Omega)} + C h^2.
\]
Inserting the estimate
$(\nabla F(v^{t,a}), d^h)_{L^2(\Omega)} \le -\frac{\varepsilon}{2}h$
from Lemma \ref{lem:diniderivative_decrease_step}, we obtain
\[ F(v^{t,a} + d^h) - F(v^{t,a}) \le -\frac{\varepsilon}{2}h + Ch^2. \]
Because $\varepsilon$ and $C$ do not depend on $h$, we obtain that
\[ F(v^{t,a} + d^h) - F(v^{t,a}) < 0 \]
holds for all $h$ sufficiently small. We restrict $h$ further such that
$t_i + h < t_{i+1}$ and $t_{i-1} < t_i - h$ hold.
This restriction gives that the construction of $d^h$ in the proof
of Lemma \ref{lem:diniderivative_decrease_step} satisfies
$v^{t,a} + d^h \in \F_{\eqref{eq:sl}}$.
Consequently, $t$
is not locally optimal for \eqref{eq:sl}, which concludes the proof.
\end{proof}

We now prove that \ref{eq:sl}-stationarity is  a necessary condition
for minimizers of both the trust-region subproblem \eqref{eq:tr}
and the problem \eqref{eq:p}.

\begin{lemma}\label{lem:sl_nec_opt_tr}
If $v \in \F_{\eqref{eq:p}} \cap BV((t_0,t_f))$ minimizes
$(\text{\emph{\ref{eq:tr}}}(v,\nabla F(v),\Delta))$
for some $\Delta > 0$, then $v$ is \ref{eq:sl}-stationary.
\end{lemma}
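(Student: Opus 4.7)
The plan is to argue by contrapositive. Suppose $v$ is not \ref{eq:sl}-stationary, and construct a feasible point of $(\text{\ref{eq:tr}}(v,\nabla F(v),\Delta))$ whose objective value is strictly negative; since $v$ itself attains objective value $0$ in this subproblem, this contradicts the assumed optimality of $v$.

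By Proposition \ref{prp:vta_representation}, write $v = v^{t,a}$. If $v^{t,a}$ fails to be \ref{eq:sl}-stationary, then at some switching location $t_i$ one of the Dini derivative inequalities from Definition \ref{dfn:sl_stationary} is violated. In every case, this matches precisely the hypotheses of Lemma \ref{lem:diniderivative_decrease_step} at $t_i$, so the lemma provides $\varepsilon > 0$, $h_0 > 0$, and, for each $h \le h_0$, a perturbation $d^h \in L^1((t_0,t_f))$ with $v + d^h \in \F_{\eqref{eq:p}}$, $\|d^h\|_{L^1((t_0,t_f))} = O(h)$, and $(\nabla F(v), d^h)_{L^2(\Omega)} \le -\tfrac{\varepsilon}{2}h$.

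The next step is to observe that, inspecting the explicit construction of $d^h$ in the proof of Lemma \ref{lem:diniderivative_decrease_step}, $d^h$ is always a single characteristic function $\chi_{[t_i,t_i+h)}$ or $\chi_{[t_i-h,t_i)}$ times the constant $v_i^- - v_i^+$ or $v_i^+ - v_i^-$. Hence $v + d^h$ is obtained from $v^{t,a}$ simply by moving the $i$-th switching location by $\pm h$, without altering the ordered tuple $a$ or any of the other switches. Therefore, if we restrict $h$ further so that $h < \min\{t_i - t_{i-1}, t_{i+1} - t_i\}$, the switching heights $|a_{j+1} - a_j|$ and their ordering are preserved, and consequently $\TV(v + d^h) = \TV(v)$ by \eqref{eq:step_representation_tv}. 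Restricting further so that $\|d^h\|_{L^1((t_0,t_f))} \le \Delta$ makes $v + d^h$ feasible for $(\text{\ref{eq:tr}}(v,\nabla F(v),\Delta))$.

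Combining these facts, the objective value of $(\text{\ref{eq:tr}}(v,\nabla F(v),\Delta))$ at $v + d^h$ equals
\[
(\nabla F(v), (v+d^h) - v)_{L^2(\Omega)} + \alpha\TV(v+d^h) - \alpha\TV(v)
\le -\tfrac{\varepsilon}{2}h + 0 < 0,
\]
whereas $v$ itself is feasible and attains objective value $0$. This contradicts the optimality of $v$, proving the claim. The main subtle point is the invariance $\TV(v+d^h) = \TV(v)$: conceptually easy, but it is where the one-dimensional structure of $\Omega$ is actually used, since the perturbation just slides a single switch along the interval rather than altering the geometry of a jump set of positive $\Ha^{d-1}$-measure.
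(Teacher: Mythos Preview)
Your proof is correct and follows essentially the same approach as the paper: a contrapositive argument that invokes Lemma \ref{lem:diniderivative_decrease_step}, then restricts $h$ so that $v+d^h$ is feasible for the trust-region subproblem and the shifted switch leaves $\TV$ invariant, yielding a strictly negative objective value. The paper's choice $h < \min\{h_0,\Delta,\tfrac{1}{2}\min_i(t_{i+1}-t_i)\}$ is functionally the same as your separate restrictions on $h$.
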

\begin{proof}
Let $v = v^{t,a}$ for $t$ and $a$ as given by Proposition 
\ref{prp:vta_representation}. We
prove the claim with a contrapositive argument.
Let $v^{t,a}$ not be \ref{eq:sl}-stationary.
Then by virtue of Lemma \ref{lem:diniderivative_decrease_step}
there exist $\varepsilon > 0$ and $h_0 > 0$ such that for all
$h \le h_0$ there exists
$d^h \in L^1(\Omega)$ such that $v^{t,a} + d^h \in \F_{(\text{\ref{eq:tr}}(v,\nabla F(v),\Delta))}$
and $(\nabla F(v^{t,a}), d^h)_{L^2(\Omega)} \le -\frac{\varepsilon}{2}h$.

Now, we choose $h < \min\left\{h_0,\Delta,\frac{1}{2}\min_i\{t_{i+1} - t_i\}\right\}$.
The inequality $h \le \Delta$ implies that the $v^{t,a} + d^h$ are feasible for
$(\text{\ref{eq:tr}}(v,\nabla F(v),\Delta))$. 
The inequality $h \le \frac{1}{2}\min_i\{t_{i+1} - t_i\}$
implies that for the $d^h$ constructed in the proof of Lemma
\ref{lem:diniderivative_decrease_step} it follows that
$\TV(v^{t,a} + d^{h}) = \TV(v^{t,a})$.

Consequently, the optimal objective value of
$(\text{\ref{eq:tr}}(v,\nabla F(v),\Delta))$
is bounded from above by $-\frac{\varepsilon}{2}h$, and therefore $v = v^{t,a}$
does not minimize  $(\text{\ref{eq:tr}}(v,\nabla F(v),\Delta))$.
This proves the claim.
\end{proof}

\begin{lemma}\label{lem:sl_nec_opt_p}
Let $v \in \F_{\eqref{eq:p}} \cap BV(\Omega)$ be $r$-optimal
for \eqref{eq:p} for some $r > 0$ with representation
$v = v^{t,a}$ for $t$ and $a$ as given by Proposition \ref{prp:vta_representation}.
Then $t$ is a local minimizer for \eqref{eq:sl}.
\end{lemma}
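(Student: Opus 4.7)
The plan is to argue by contrapositive. Suppose $t$ is not a local minimizer for \eqref{eq:sl}. Then there is a sequence $(t^n)_n \subset \F_{\eqref{eq:sl}}$ with $t^n \to t$ componentwise and $F(v^{t^n,a}) < F(v^{t,a})$ for every $n$. I will use these perturbations to manufacture an element of $\F_{\eqref{eq:p}}$ in the $L^1$-ball of radius $r$ around $v = v^{t,a}$ with strictly smaller objective, contradicting $r$-optimality.

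First I would confirm feasibility of $v^{t^n,a}$ for \eqref{eq:p}. This is immediate from the first part of Proposition \ref{prp:sl_necessary}, since $a_i \in \{\nu_1,\ldots,\nu_M\}$ and $\TV(v^{t^n,a}) < \infty$ by \eqref{eq:step_representation_tv}.

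Next I would show that the total variation does not increase along the sequence, that is, $\TV(v^{t^n,a}) \le \TV(v^{t,a})$ for every $n$. If the ordering is strict, $t_{i-1}^n < t_i^n$ for all $i$, then $\TV(v^{t^n,a}) = \sum_{i=1}^{N-1}|a_{i+1}-a_i| = \TV(v^{t,a})$ by \eqref{eq:step_representation_tv}. If some switching locations collapse, $t_{i-1}^n = t_i^n$, the value $a_i$ disappears from the representation $v^{t^n,a}$ and the jump contributions $|a_i - a_{i-1}| + |a_{i+1} - a_i|$ are replaced by $|a_{i+1} - a_{i-1}|$; the triangle inequality then yields that $\TV$ is non-increasing under such a collapse. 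Iterating over any collapsed pairs gives the claim.

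Finally I would quantify the $L^1$-distance. Since $v^{t^n,a}$ and $v^{t,a}$ coincide outside the union of the intervals with endpoints $t_i$ and $t_i^n$, and differ on those intervals by at most $\nu_M - \nu_1$, a routine estimate gives
\[ \|v^{t^n,a} - v^{t,a}\|_{L^1((t_0,t_f))} \le (\nu_M - \nu_1)\sum_{i=1}^{N-1} |t_i^n - t_i|, \]
so the $L^1$-norm tends to zero as $t^n \to t$. For $n$ large enough we then have simultaneously $\|v^{t^n,a} - v^{t,a}\|_{L^1((t_0,t_f))} \le r$ and
\[ F(v^{t^n,a}) + \alpha\TV(v^{t^n,a}) < F(v^{t,a}) + \alpha\TV(v^{t,a}), \]
contradicting Definition \ref{dfn:roptimal}. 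The only step requiring genuine care is the monotonicity of $\TV$ under collapsing switches, which is a triangle-inequality bookkeeping argument; once this is in place the remaining pieces are elementary.
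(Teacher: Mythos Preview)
Your argument is correct and follows essentially the same idea as the paper's proof: small perturbations of the switching vector $t$ yield feasible controls that are $L^1$-close to $v^{t,a}$ and do not increase the total variation, so $r$-optimality forces $F(v^{t^n,a}) \ge F(v^{t,a})$. The paper phrases this directly rather than by contrapositive, simply choosing $h$ small enough that the perturbation windows $(t_i-h,t_i+h)$ are disjoint (which is possible because Proposition~\ref{prp:vta_representation} gives $t_{i-1} < t_i$ strictly), so that $\TV(v^{t^h,a}) = \TV(v^{t,a})$ exactly; note that for the same reason your collapse case is in fact vacuous once $n$ is large, since $t^n \to t$ componentwise and the limit has strict inequalities.
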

\begin{proof}
Let $0 < h < \frac{1}{\max_{1 \le i<j \le M}|\nu_i - \nu_j|N}\min\{\frac{1}{2}\min\{t_{i+1} - t_i\},r\}$.
Then we obtain for all $t^h_i \in (t_i - h, t_i + h)$
for all $i \in \{1,\ldots,N-1\}$ that $\TV(v^{t_h,a}) = \TV(v)$.
Note that by Proposition \ref{prp:vta_representation} $t_{i+1} - t_i > 0$
for all $i$, and thus by construction the intervals $(t_i - h, t_i + h)$
do not intersect.
Consequently, the $r$-optimality gives that $F(v^{t_h,a}) \ge F(v^{t,a})$.
This implies that  $t$  is a local minimizer for \eqref{eq:sl}.
\end{proof}

\begin{remark}\label{rem:dh_cauchy}
The feasible points $v + d^h \in \F_{\eqref{eq:p}} \cap BV((t_0,t_f))$
constructed with the help of Lemma \ref{lem:diniderivative_decrease_step}
allow us to bound the optimal value of the trust-region subproblem
\eqref{eq:tr} away from zero by a fraction of the trust-region
radius if $v$ is not optimal.
Therefore, they may be considered as the analog of Cauchy points that
are used in the analysis of trust-region methods for nonlinear optimizations.
\end{remark}

We summarize the relationship of the stationarity concepts in
the following theorem.
\begin{theorem}\label{thm:summary_sl_stationarity}
Let Assumptions \ref{ass:f_setting} hold.
Let $v \in \F_{\eqref{eq:p}} \cap BV((t_0,t_f))$
with representation
$v = v^{t,a}$ for $t$ and $a$ as given by Proposition \ref{prp:vta_representation}.
Then the following assertions
hold.
\begin{enumerate}
\item If $t$ is optimal for \eqref{eq:sl}
and Assumption \ref{ass:d1} holds, then $v$ is \ref{eq:sl}-stationary.

\item If $v$ is optimal for $(\text{\emph{\ref{eq:tr}}}(v,\nabla F(v),\Delta))$
for some $\Delta > 0$, then $v$ is \ref{eq:sl}-stationary.

\item If
$v$ is $r$-optimal for some $r > 0$, then $t$ is
a local minimizer for \eqref{eq:sl}.
If Assumption \ref{ass:d1} holds, then $v$ is also
\ref{eq:sl}-stationary.

\item Let $F$ be convex. If $v$ is optimal for
$(\text{\emph{\ref{eq:tr}}}(v,\nabla F(v),\Delta))$
for some $\Delta > 0$, then
$v$ is $r$-optimal.
\end{enumerate}
\end{theorem}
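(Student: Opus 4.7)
The plan is to establish each of the four assertions by citing the relevant results developed earlier in Section \ref{sec:first_order}, since this theorem is essentially a digest of those lemmas together with Proposition \ref{prp:tr_convex_sufficient}.

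First, for assertion 1, I would observe that this is precisely the conclusion of Lemma \ref{lem:sl_1storder_dini}, whose hypotheses (Assumption \ref{ass:d1} and optimality of $t$ for \eqref{eq:sl}) match those assumed here. Similarly, assertion 2 is exactly the statement of Lemma \ref{lem:sl_nec_opt_tr}. So both follow by direct reference.

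For assertion 3, I would split the argument into two steps. First, Lemma \ref{lem:sl_nec_opt_p} yields that the switch locations $t$ constitute a local minimizer for \eqref{eq:sl}. Second, if Assumption \ref{ass:d1} additionally holds, I would invoke assertion 1 (equivalently, Lemma \ref{lem:sl_1storder_dini}) with this $t$ to upgrade local minimality to \ref{eq:sl}-stationarity of $v = v^{t,a}$.

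Assertion 4 is the only item not already packaged as a lemma, and is the one requiring a short dedicated argument. The key observation is that $\tilde{v} = v$ is itself trivially feasible for $(\text{\ref{eq:tr}}(v,\nabla F(v),\Delta))$ and yields objective value exactly $0$. Since $v$ is assumed to be a minimizer of this trust-region subproblem, the infimal objective value is attained at $v$ itself and equals $0$, in particular it is nonnegative. Because $F$ is convex and differentiable, $\nabla F(v) \in \partial F(v)$, so Proposition \ref{prp:tr_convex_sufficient} applied with $r = \Delta$ and $\tilde{g} = \nabla F(v)$ immediately yields $r$-optimality of $v$. The only mild care needed throughout is this final identification: one must note that the optimal value of the trust-region subproblem is exactly $0$ (so nonnegative) rather than just nonpositive, in order to invoke Proposition \ref{prp:tr_convex_sufficient}; otherwise the proof is entirely an exercise in bookkeeping of earlier results.
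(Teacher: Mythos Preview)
Your proposal is correct and follows essentially the same approach as the paper, which proves the theorem by direct reference to Lemmas \ref{lem:sl_1storder_dini}, \ref{lem:sl_nec_opt_tr}, \ref{lem:sl_nec_opt_p}, and Proposition \ref{prp:tr_convex_sufficient}. Your treatment of assertions 3 and 4 is in fact slightly more explicit than the paper's terse citations (you spell out the chaining through Lemma \ref{lem:sl_1storder_dini} for the second half of assertion 3, and the nonnegativity of the optimal trust-region value needed to invoke Proposition \ref{prp:tr_convex_sufficient} for assertion 4), but this is only added clarity, not a different route.
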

\begin{proof}
The first claim follows from Proposition \ref{prp:sl_necessary}
and Lemma \ref{lem:sl_1storder_dini}.
The second claim follows from Lemma \ref{lem:sl_nec_opt_tr}.
The third claim follows from Lemma \ref{lem:sl_nec_opt_p}.
The fourth claim follows from Proposition \ref{prp:tr_convex_sufficient}.
\end{proof}

\begin{remark}
The construction of the function $d^h$ in the proof of Lemma 
\ref{lem:diniderivative_decrease_step} can be considered as the
construction of points of sufficient decrease (Cauchy points)
for \eqref{eq:sl}; see Remark \ref{rem:dh_cauchy}.
Because of the lack of differentiability of the term $\TV$,
we have not found a way to construct such points for the
problem \eqref{eq:tr}, however. Therefore, we cannot prove
that $v$ solving $(\text{\ref{eq:tr}}(v,\nabla F(v),\Delta))$
is a necessary condition for $r$-optimality.
\end{remark}

\begin{remark}
We highlight that the regularity on the Hessian enforced by
Assumption \ref{ass:d1} 
enters the proof of Lemma \ref{lem:sl_1storder_dini} only to enable
the use of Taylor's theorem. The assumption compensates for
the fact that perturbations of the switching locations by $h$
imply linear changes in the measure of the preimages $(d^h)^{-1}(\nu_i - \nu_j)$,
which  result in changes only in the $L^2$-norm of $O(h^{\frac{1}{2}})$.
However, $O(h^s)$ for $s > \frac{1}{2}$ would be necessary in order to ensure that the
remainder term of the first-order Taylor expansion decays fast enough.
\end{remark}

We briefly show that \ref{eq:sl}-stationarity indeed generalizes the usual
first-order condition for \eqref{eq:sl}.

\begin{proposition}
Let $v \in \F_{\eqref{eq:p}} \cap BV((t_0,t_f))$ be given with representation
$v = v^{t,a}$ for $t$ and $a$ as given by Proposition \ref{prp:vta_representation}.
Let $\nabla F(v)$ have a representative that is continuous at $t_i$,
or let $t_i$ be in the Lebesgue set of $\nabla F(v)$
for all $i \in \{1,\ldots,N - 1\}$. Then, \ref{eq:sl}-stationarity
of $v$ is equivalent to
\[ \nabla F(v)(t_i) = 0 \text{ for all } i \in \{1,\ldots,N-1\}. \]
\end{proposition}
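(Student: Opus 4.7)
The plan is straightforward: exploit the observation (already noted immediately before Lemma \ref{lem:diniderivative_decrease_step}) that at a Lebesgue point of $\nabla F(v)$, or where a continuous representative exists, all four Dini derivatives $\overline{D_i^\pm}(\nabla F(v))$ and $\underline{D_i^\pm}(\nabla F(v))$ collapse to the common value $\nabla F(v)(t_i)$. Once this is in hand, the conditions in Definition \ref{dfn:sl_stationary} reduce to simple sign constraints on a single number.

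More precisely, I would first fix some $i \in \{1,\ldots,N-1\}$ and, under the hypothesis on $\nabla F(v)$, record the identity
\[ \overline{D_i^+}(\nabla F(v)) = \underline{D_i^+}(\nabla F(v))
 = \overline{D_i^-}(\nabla F(v)) = \underline{D_i^-}(\nabla F(v))
 = \nabla F(v)(t_i), \]
which follows from the definition of a Lebesgue point (and is immediate if $\nabla F(v)$ has a continuous representative at $t_i$). Then I would substitute these equalities into the two cases of Definition \ref{dfn:sl_stationary}: if $a_i < a_{i+1}$, the requirement $\overline{D_i^-}(\nabla F(v)) \ge 0 \ge \underline{D_i^+}(\nabla F(v))$ becomes $\nabla F(v)(t_i) \ge 0 \ge \nabla F(v)(t_i)$, forcing $\nabla F(v)(t_i) = 0$; analogously, if $a_{i+1} < a_i$ the condition $\underline{D_i^-}(\nabla F(v)) \le 0 \le \overline{D_i^+}(\nabla F(v))$ also forces $\nabla F(v)(t_i) = 0$. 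Since by Proposition \ref{prp:vta_representation} $a_i \ne a_{i+1}$, exactly one of the two cases applies, so \ref{eq:sl}-stationarity implies $\nabla F(v)(t_i) = 0$ for every $i$.

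The reverse direction is even easier: if $\nabla F(v)(t_i) = 0$ for all $i$, then each of the four Dini derivatives vanishes by the same identification, and therefore all inequalities in Definition \ref{dfn:sl_stationary} are satisfied (with equality), giving \ref{eq:sl}-stationarity. Because the whole argument is just a substitution, there is no genuine obstacle; the only thing that requires a pointer rather than a calculation is justifying why the four Dini derivatives agree with $\nabla F(v)(t_i)$, which is exactly the observation made in the text immediately after the definition of the Dini derivatives.
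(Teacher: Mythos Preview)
Your proposal is correct and mirrors the paper's own proof essentially step for step: both reduce the continuity/Lebesgue-point hypothesis to the identification of all four Dini derivatives with $\nabla F(v)(t_i)$, and then observe that the inequalities in Definition \ref{dfn:sl_stationary} collapse to $\nabla F(v)(t_i) = 0$. Your write-up is in fact slightly more explicit than the paper's, spelling out both cases and both directions separately.
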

\begin{proof}
First, we note that if $\nabla F(v)$ is continuous at $t_i$, then $t_i$
is a Lebesgue point of $\nabla F(v)$.
Moreover, if $t_i$ is a Lebesgue point of $v$, we obtain that
\[ \overline{D^+_i}\nabla F(v)
   = \overline{D^-_i}\nabla F(v)
   = \underline{D^+_i}\nabla F(v)
   = \underline{D^-_i}\nabla F(v)
   = \nabla F(v)(t_i) \]
by definition of the Lebesgue set; see \cite[Chap.\,3.1]{stein2009real}.
Then for all $i \in \{1,\ldots,N-1\}$, the
inequalities in  Definition \ref{dfn:sl_stationary} are
equivalent to $\nabla F(v)(t_i) = 0$.
\end{proof}
\begin{remark}\label{rem:dge2_construction_fail}
We note that the construction of analogs of Cauchy points and
stationarity cannot be generalized directly to the multidimensional
case $d \ge 2$ and more involved geometric constructions are necessary.
\end{remark}

\subsection{Sufficient Decrease Condition and Asymptotics of the Inner Loop}\label{sec:suff_dec}

Before continuing with the analysis of the outer loop, we analyze the 
asymptotics of the inner loop of Algorithm \ref{alg:trm}.
We show that the inner loop terminates finitely unless the current
iterate satisfies a necessary optimality condition 
(\ref{eq:sl}-stationarity) for $r$-optimality of \eqref{eq:p}.
This is shown in Corollary \ref{cor:step_accept}, for which we
require the preparatory lemma below.
\begin{lemma}\label{lem:step_accept}
Let Assumption \ref{ass:d1} hold.
Let $\sigma \in (0,1)$. Let $v \in \F_{\eqref{eq:p}}$.
Let $\Delta^k \downarrow 0$.
For all $k \in \N$, let $v^k$ be
a minimizer of $(\text{\emph{\ref{eq:tr}}}(v,\nabla F(v),\Delta^{k}))$.
Then at least one of the following statements holds true.
\begin{enumerate}
\item The function $v$ is \ref{eq:sl}-stationary.	  
\item There exists $k_0 \in \N$ such that
	  the objective value of \eqref{eq:tr} evaluated at $v^{k_0}$
	  is zero and $v$ is \ref{eq:sl}-stationary.
\item There exists $k_0 \in \N$ such that
      \[ F(v) + \alpha \TV(v)
         - F(v^{k_0}) - \alpha \TV(v^{k_0})
         \ge \sigma \left(
         (\nabla F(v),v - v^{k_0})_{L^2(\Omega)}
         + \alpha \TV(v) - \alpha \TV(v^{k_0}) \right);
      \]
	  that is, the sufficient decrease condition
	  \eqref{eq:1d_accept}	  
	  (Line \ref{ln:suffdec} in Algorithm \ref{alg:trm}) holds.
\end{enumerate}
\end{lemma}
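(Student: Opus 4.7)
The plan is to argue by case analysis on whether $v$ is \ref{eq:sl}-stationary. If it is, conclusion~(1) holds trivially and there is nothing to prove, so assume from now on that $v$ is not \ref{eq:sl}-stationary; the task then reduces to showing that the sufficient decrease inequality in~(3) must hold for some $k_0$. I would combine the Cauchy-type construction from Lemma~\ref{lem:diniderivative_decrease_step} (to bound the predicted reduction from below linearly in $\Delta^k$) with the second-order Taylor bound made available by Assumption~\ref{ass:d1} (to bound the discrepancy between predicted and actual reduction quadratically in $\Delta^k$).

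Concretely, since $v$ is not \ref{eq:sl}-stationary, Lemma~\ref{lem:diniderivative_decrease_step} yields $\varepsilon > 0$ and $h_0 > 0$ together with perturbations $d^h \in L^1((t_0,t_f))$ for $h \le h_0$ such that $\|d^h\|_{L^1((t_0,t_f))} = h$, $v + d^h \in \F_{\eqref{eq:p}}$, and $(\nabla F(v), d^h)_{L^2(\Omega)} \le -\varepsilon h / 2$. As in the proof of Lemma~\ref{lem:sl_nec_opt_tr}, by shrinking $h_0$ if necessary so that the support of each $d^h$ lies strictly inside a single interval between consecutive switching locations of $v$, one obtains $\TV(v + d^h) = \TV(v)$. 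Hence, for every $k$ with $\Delta^k \le h_0$ the function $v + d^{\Delta^k}$ is feasible for $(\text{\ref{eq:tr}}(v,\nabla F(v),\Delta^k))$ with TR-objective $(\nabla F(v), d^{\Delta^k})_{L^2(\Omega)} \le -\varepsilon\Delta^k/2$, and minimality of $v^k$ yields the lower bound
\[
\pred(v,\Delta^k) \ge \frac{\varepsilon}{2}\Delta^k.
\]
In particular, $\pred(v,\Delta^k) > 0$, which already rules out conclusion~(2) under the current assumption.

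To conclude, I would invoke Taylor's theorem together with Assumption~\ref{ass:d1} and the trust-region constraint $\|v^k - v\|_{L^1(\Omega)} \le \Delta^k$ to obtain the quadratic remainder estimate
\[
\bigl|\ared(v,v^k) - \pred(v,\Delta^k)\bigr| = \bigl| F(v^k) - F(v) - (\nabla F(v), v^k - v)_{L^2(\Omega)} \bigr| \le \frac{C}{2}(\Delta^k)^2,
\]
and then combine the two estimates to get
\[
\ared(v,v^k) - \sigma \pred(v,\Delta^k) \ge (1-\sigma)\pred(v,\Delta^k) - \frac{C}{2}(\Delta^k)^2 \ge \frac{(1-\sigma)\varepsilon}{2}\Delta^k - \frac{C}{2}(\Delta^k)^2.
\]
The right-hand side is nonnegative as soon as $\Delta^k \le (1-\sigma)\varepsilon/C$, which eventually holds since $\Delta^k \downarrow 0$, yielding conclusion~(3). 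The main subtlety is that a standard Cauchy-point argument for trust-region methods relies on smoothness of the entire objective, whereas $\TV$ is nonsmooth here; this is sidestepped by the $\TV$-preserving choice of $d^h$, which lets the quantitative comparison between $\pred$ and $\ared$ reduce to a purely $F$-based Taylor estimate that only uses the bilinear $L^1$-bound afforded by Assumption~\ref{ass:d1}.
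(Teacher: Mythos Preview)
Your argument is correct and in fact streamlines the paper's own proof. The paper proceeds by a case distinction on the asymptotic behaviour of $\TV(v^k)$: invoking the integrality property \eqref{eq:tv_diff_inz_1d}, it argues that either $\TV(v^{k_\ell}) = \TV(v)$ eventually along a subsequence (Case~1), in which case the Cauchy-point and Taylor estimates are combined essentially as you do, or $\TV(v^{k_\ell}) \ge \TV(v) + 1$ along a subsequence (Case~2), which is then shown to be incompatible with the optimality of $v^{k_\ell}$. Your key observation is that the $\alpha\TV$ terms appear identically in both $\ared$ and $\pred$ and therefore cancel in the difference $\ared(v,v^k) - \pred(v,\Delta^k)$; this makes the case split unnecessary, since the quadratic Taylor remainder bound $|\ared - \pred| \le \tfrac{C}{2}(\Delta^k)^2$ holds irrespective of the value of $\TV(v^k)$, and together with the linear lower bound $\pred(v,\Delta^k) \ge \tfrac{\varepsilon}{2}\Delta^k$ from the $\TV$-preserving comparison step $v + d^{\Delta^k}$ it yields the sufficient decrease directly. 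The paper's route has the side benefit of foreshadowing how the integrality of $\TV$ is used later in the proof of Theorem~\ref{thm:pure_tr_asymptotics}, but for this lemma in isolation your approach is both shorter and conceptually cleaner.
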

\begin{proof}
We first observe that if the objective value of
$(\text{\ref{eq:tr}}(v,\nabla F(v),\Delta^{k}))$ evaluated at
$v^k$ is zero, then Theorem \ref{thm:summary_sl_stationarity} implies
that $v$ is \ref{eq:sl}-stationary.
Next, we assume that the first two claims do not hold true, and we set forth
to prove the third.
From $\Delta^{k} \downarrow 0$ and the constraints
of \eqref{eq:tr} it follows that 
$v^{k} \to v$ in $L^1(\Omega)$ and in $L^2(\Omega)$.
Moreover, we use the lower semi-continuity
$\TV(v) \le \liminf_{k\to\infty} \TV(v^k)$
to distinguish the following two cases.

\textbf{Case 1: $0 = \lim_{\ell\to\infty} \TV(v^{k_\ell}) - \TV(v)$
for a subsequence $(v^{k_\ell})_\ell$.}
Because \eqref{eq:tv_diff_inz_1d} implies $\TV(v^{k_\ell}) - \TV(v) \in \Z$,
it follows that $\TV(v^{k_\ell}) = \TV(v)$ for all $\ell \ge \ell_1$
for some $\ell_1 \in \N$.
Because the optimal objective of $(\text{TR}(v,\nabla F(v),\Delta^{k_\ell}))$
is negative for all $\ell$, it follows that
\[ (\nabla F(v), v - v^{k_\ell})_{L^2(\Omega)} > 0 \]
for all $\ell \ge \ell_1$. From
Taylor's theorem (see Proposition \ref{prp:lin_approx}) it follows that
\begin{align*}
F(v) - F(v^k) &= \sigma(\nabla F(v), v - v^{k_\ell})_{L^2(\Omega)} + 
                 (1 - \sigma)(\nabla F(v), v - v^{k_\ell})_{L^2(\Omega)}
                 + \frac{1}{2}\nabla^2F(\xi^{k_{\ell}})(v - v^{k_\ell},v - v^{k_\ell})
\end{align*}
for some $\xi^{k_\ell}$ in the line segment between $v$ and $v^{k_\ell}$.
Because $v$ is not \ref{eq:sl}-stationary by assumption, we can apply Lemma 
\ref{lem:diniderivative_decrease_step} and the optimality of
$v^{k_\ell}$ for the problem
$(\text{\ref{eq:tr}}(v,\nabla F(v),\Delta^{k_\ell}))$
as in the proof of Lemma \ref{lem:sl_nec_opt_p} to obtain the
bounds
\[ (1 - \sigma)(\nabla F(v), v - v^{k_\ell})_{L^2(\Omega)}
   \ge (1 - \sigma)\frac{\varepsilon}{2}\Delta^{k_\ell}, \]
and
\[ \frac{1}{2}\nabla^2F(\xi^{k_{\ell}})(v - v^{k_\ell},v - v^{k_\ell})
   \le \frac{1}{2}C(\Delta^{k_\ell})^2 \]
for constants $\varepsilon > 0$ and $C > 0$ and all $\ell$ sufficiently large.
This implies that the term $(1 - \sigma)(\nabla F(v), v - v^{k_\ell})_{L^2(\Omega)}$
eventually dominates the term $\frac{1}{2}\nabla^2F(\xi^{k_{\ell}})(v - v^{k_\ell},v - v^{k_\ell})$ and that, together with $\TV(v) = \TV(v^{k_\ell})$,
Outcome 3 holds with the choice $k_0 \coloneqq k_\ell$ for some
$\ell$ sufficiently large.

\textbf{Case 2: $\TV(v) - \TV(v^{k_\ell}) < 0$ for a subsequence
$(v^{k_\ell})_\ell$ and all $\ell \ge \ell_1$ for some $\ell_1 \in \N$.}
In this case we note that \eqref{eq:tv_diff_inz_1d} implies
$\TV(v) - \TV(v^{k_\ell}) \le -1$, giving
$\limsup_{\ell\to\infty} \TV(v) - \TV(v^{k_\ell}) \le -1$. This gives
\[ \limsup_{\ell \to \infty}
   \left\{(\nabla F(v), v - v^{k_\ell})_{L^2(\Omega)} + \alpha \TV(v) - \alpha \TV(v^k)\right\}
   = \alpha \limsup_{k\to\infty}\ \TV(v) - \TV(v^k) = -\alpha < 0, \]
where the first identity follows from the Cauchy--Schwarz inequality
and $v^k \to v$ in $L^2(\Omega)$.
However, this means that there exists $k_0 \in \N$ such that
$v$ is $\Delta^{k_0}$-optimal, which means that the first claim holds
and contradicts the assumption that it does not. This concludes the proof.
\end{proof}

\begin{corollary}\label{cor:step_accept}
Let Assumptions \ref{ass:f_setting} and \ref{ass:d1} hold.
Then for all iterations $n = 0,...$ executed
by Algorithm \ref{alg:trm} it follows that one of the following outcomes
holds.
\begin{enumerate}
\item The inner loop terminates after finitely many
iterations, with the outcome that the predicted reduction
is zero (and the current iterate is \ref{eq:sl}-stationary)
or the sufficient decrease condition
\eqref{eq:1d_accept} is satisfied.
\item The inner loop does not terminate, and the current iterate
is \ref{eq:sl}-stationary.
\end{enumerate}
\end{corollary}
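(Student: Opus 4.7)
The plan is to fix an outer iteration $n$ and reduce the claim to a direct application of Lemma \ref{lem:step_accept}. In Algorithm \ref{alg:trm}, the inner loop starts with $\Delta^{n,0} = \Delta^0$, and if it does not terminate at iteration $k$, then $\Delta^{n,k+1} = \Delta^{n,k}/2$. Hence if the inner loop does not terminate, we obtain a sequence $\Delta^{n,k} = \Delta^0 / 2^k \downarrow 0$ together with a sequence of minimizers $\bar{v}^{n,k}$ of $(\text{\ref{eq:tr}}(v^{n-1},\nabla F(v^{n-1}),\Delta^{n,k}))$, which exist by Proposition \ref{prp:tr_well_defined}.

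If the inner loop terminates after finitely many iterations, termination must have occurred either because $\pred(v^{n-1},\Delta^{n,k}) \le 0$ (in which case Theorem \ref{thm:summary_sl_stationarity} gives \ref{eq:sl}-stationarity of $v^{n-1}$, since a nonpositive objective value combined with feasibility of $\tilde v = v^{n-1}$ forces $v^{n-1}$ to be a minimizer of the trust-region subproblem) or because the sufficient decrease inequality \eqref{eq:1d_accept} holds. Either way, the first outcome of the corollary is realized.

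Next I would handle the non-termination case by applying Lemma \ref{lem:step_accept} to $v \coloneqq v^{n-1}$ and the sequence $\Delta^k \coloneqq \Delta^{n,k}$ with minimizers $v^k \coloneqq \bar{v}^{n,k}$. The lemma yields one of three outcomes. Outcome (c) would supply an index $k_0$ at which the sufficient decrease condition \eqref{eq:1d_accept} holds, which would trigger termination of the inner loop at iteration $k_0$ — contradicting the non-termination hypothesis. Outcome (b) would supply an index $k_0$ at which the predicted reduction evaluated at $\bar{v}^{n,k_0}$ is zero, which would trigger the termination branch of the algorithm at iteration $k_0$ — again contradicting non-termination. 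Therefore outcome (a) must hold, i.e., $v^{n-1}$ is \ref{eq:sl}-stationary, which is precisely the second alternative of the corollary.

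The only subtlety I anticipate is the careful verification that the termination branches of the algorithm really are triggered under outcomes (b) and (c) of Lemma \ref{lem:step_accept}; this requires observing that the predicted reduction in Line \ref{ln:pred} of Algorithm \ref{alg:trm} coincides with the negative objective of $(\text{\ref{eq:tr}}(v^{n-1},\nabla F(v^{n-1}),\Delta^{n,k}))$ evaluated at $\bar{v}^{n,k}$, and that \eqref{eq:1d_accept} is exactly the decrease condition tested in Line \ref{ln:suffdec}. Both are immediate from the definitions, so the argument is essentially a bookkeeping translation of Lemma \ref{lem:step_accept} into the algorithmic language of the outer/inner loop.
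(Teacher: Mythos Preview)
Your proposal is correct and follows essentially the same approach as the paper, which simply invokes Lemma \ref{lem:step_accept} with $\Delta^k = \Delta^{n,k}$ and $v = v^{n-1}$. Your version is more detailed in spelling out why outcomes (b) and (c) of the lemma contradict non-termination, but this is just an elaboration of the same one-line reduction.
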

\begin{proof}
This follows from Lemma \ref{lem:step_accept}
with $\Delta^k = \Delta^{n,k}$ and $v = v^{n-1}$
\end{proof}

\begin{remark}\label{rem:step_accept}
The proof of Lemma \ref{lem:step_accept}
hinges on the fact that for $\TV(v) - \TV(\tilde{v}) < 0$
for feasible $v$, $\tilde{v}$ we can deduce that
$\TV(v) - \TV(\tilde{v}) \le -1$ from \eqref{eq:tv_diff_inz_1d}.
This is due to two facts.
First, a discrete-valued function of bounded variation is a
step function defined on finitely many intervals.
Second, considering the step function representation
\eqref{eq:step_representation},
we obtain that $\Ha^{d-1}(E) = \Ha^0(E) = 1$ for an interior facet
(that is, a point $E = \{x_i\}$ for $i \in \{1,\ldots,N-1\}$)
that connects the intervals $[x_{i-1},x_i)$ and $[x_i,x_{i+1})$.
\end{remark}

\begin{remark}
Because $\Ha^{d-1}(E)$ is not the counting measure
for $d \ge 2$ but can attain arbitrarily small values instead,
\eqref{eq:tv_diff_inz_1d} is in general incorrect for $d \ge 2$,
and $\TV(v) - \TV(\tilde{v}) < 0$ does not imply
$\TV(v) - \TV(\tilde{v}) < -\varepsilon$ for some
$\varepsilon > 0$.
Hence, the proof cannot be transferred directly to domains
in higher dimensions.
\end{remark}

\subsection{Asymptotics of Algorithm \ref{alg:trm}}\label{sec:asymptotics}

With the help of the concept of \ref{eq:sl}-stationarity,
we are able to characterize the asymptotics of the sequence
of iterates generated by Algorithm \ref{alg:trm}.
We recall that we say that a sequence
$(v^n)_n \subset BV(\Omega)$ converges strictly to a limit $v$
in $BV(\Omega)$ if $v^n \to v$ in $L^1(\Omega)$ and
$\TV(v^n) \to \TV(v)$.

\begin{theorem}\label{thm:pure_tr_asymptotics}
Let Assumptions \ref{ass:f_setting} and \ref{ass:d1} hold.
Let the iterates $(v^n)_n$ be produced by Algorithm \ref{alg:trm}.
Then all iterates are feasible for \eqref{eq:p}, and 
the sequence of objective values $(J(v^n))_n$
is monotonously decreasing.
Moreover, one of the following mutually exclusive outcomes holds:
\begin{enumerate}
\item\label{itm:finite_seq_tr} The sequence $(v^n)_n$ is finite.
The final element $v^N$ of $(v^n)_n$ solves
$(\text{\emph{\ref{eq:tr}}}(v^N,\nabla F(v^N),\Delta))$
for some $\Delta > 0$, and $v^N$ is \ref{eq:sl}-stationary.
\item\label{itm:finite_seq_tr_contract} The sequence $(v^n)_n$ is finite, and
the inner loop does not terminate for the final element $v^N$,
which is \ref{eq:sl}-stationary.
\item\label{itm:infinite_seq_sl} The sequence $(v^n)_n$ has a weak-$^*$ accumulation
point in $BV((t_0,t_f))$. Every weak-$^*$ accumulation point of $(v^n)_n$
is feasible, \ref{eq:sl}-stationary and strict.
If the trust-region radii are bounded away from zero, that is,
if $\liminf_{n_\ell\to\infty} \min_{k}\Delta^{n_{\ell}+1,k} > 0$
for a subsequence $(v^{n_\ell})_\ell$ and $\bar{v}$ is a weak-$^*$ accumulation point of $(v^{n_\ell})_\ell$, then 
$\bar{v}$ solves $(\text{\emph{\ref{eq:tr}}}(\bar{v},\nabla F(\bar{v}),\bar{\Delta}))$ for some $\bar{\Delta} > 0$.
\end{enumerate}
\end{theorem}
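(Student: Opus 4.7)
My plan is to decompose the proof by outcome. Feasibility of every iterate $v^n$ is immediate from the discreteness constraint built into every $(\text{\ref{eq:tr}})$ subproblem, and monotonicity of $J(v^n)$ follows from the acceptance criterion on Line \ref{ln:suffdec} together with $\pred\ge 0$. Outcome \ref{itm:finite_seq_tr} corresponds to termination because $\pred\le 0$: since $v^N$ itself realizes value $0$ in the TR subproblem the minimum equals $0$, so $v^N$ solves $(\text{\ref{eq:tr}}(v^N,\nabla F(v^N),\Delta))$, and Theorem \ref{thm:summary_sl_stationarity} gives \ref{eq:sl}-stationarity. Outcome \ref{itm:finite_seq_tr_contract} is exactly the second alternative of Corollary \ref{cor:step_accept}. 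Mutual exclusivity is built into the algorithm.

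For Outcome \ref{itm:infinite_seq_sl} I first obtain boundedness in $BV((t_0,t_f))$ from $\|v^n\|_{L^\infty}\le\max_i|\nu_i|$ and $\alpha\TV(v^n)\le J(v^0)-\inf F$; weak-$^*$ compactness then supplies a subsequence with $v^{n_k}\weakstarto\bar v$ that converges in $L^1$ and, by dominated convergence, in $L^2$. Feasibility of $\bar v$ is Lemma \ref{lem:discreteness_feasibility}. Next I argue strict convergence. Continuity of $F$ on $L^2$ under Assumption \ref{ass:d1} and $J(v^n)\to J^*$ give $\TV(v^{n_k})\to(J^*-F(\bar v))/\alpha$. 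By \eqref{eq:tv_diff_inz_1d}, $\TV(v^{n_k})-\TV(\bar v)\in\Z$, and as a convergent integer sequence it equals some $m\in\N$ for large $k$, with $m\ge 0$ by weak-$^*$ lsc of $\TV$. Ruling out $m\ge 1$ is the principal obstacle. If $m\ge 1$, then $\bar v$ is feasible for the TR subproblem at $v^{n_k}$ whenever $\Delta\ge\varepsilon_k:=\|\bar v-v^{n_k}\|_{L^1}$, and its TR objective converges to $-\alpha m\le-\alpha$. A Taylor expansion mirroring Case 1 of Lemma \ref{lem:step_accept}, controlled by the Hessian bound of Assumption \ref{ass:d1}, shows that the sufficient decrease criterion is met at any radius in $[\varepsilon_k,2\varepsilon_k]$ for $k$ large (the quadratic remainder is $O(\varepsilon_k^2)$, while pred is bounded below by $\alpha m/2$). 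Hence the inner loop terminates at some $\Delta^{n_k+1,k(\ell)}\ge\varepsilon_k$, yielding $J(v^{n_k})-J(v^{n_k+1})\ge\sigma\alpha m/2$ for infinitely many $k$, contradicting summability of the $J$-decrements. Thus $m=0$, i.e.\ strict convergence.

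For \ref{eq:sl}-stationarity of $\bar v$ I use that strict convergence of integer-valued step functions in one dimension forces $v^{n_k}$ to share the switching signature $(a_1,\dots,a_N)$ of $\bar v$ for large $k$, with switch locations $t_i^k\to t_i$ (otherwise missing or extra switches would contradict either strict convergence or $L^1$-convergence). If $\bar v$ were not \ref{eq:sl}-stationary, Lemma \ref{lem:diniderivative_decrease_step} supplies at some bad switch $t_i$ a direction $d^h$ with $\|d^h\|_{L^1}=h$, $\TV(\bar v+d^h)=\TV(\bar v)$, and $(\nabla F(\bar v),d^h)_{L^2}\le-\varepsilon h/2$; continuity of $\nabla F:L^2\to L^2$ and $t_i^k\to t_i$ let me transplant $d^h$ to $d^h_k$ anchored at $t_i^k$ preserving $\TV$ and satisfying $(\nabla F(v^{n_k}),d^h_k)_{L^2}\le-\varepsilon h/4$ for $k$ large. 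The Taylor argument of Lemma \ref{lem:sl_1storder_dini} then delivers the sufficient decrease condition at iteration $n_k+1$ with a uniformly positive pred once $h$ is chosen small (independently of $k$), again contradicting $J(v^n)\to J^*$. For the final assertion, when $\liminf\Delta^{n_\ell+1,k(\ell)}\ge\Delta_*>0$, summability of the $J$-decrements implies $\pred(v^{n_\ell},\Delta^{n_\ell+1,k(\ell)})\to 0$; monotonicity of the TR minimum in $\Delta$ yields that the TR minimum at $v^{n_\ell}$ with radius $\Delta_*$ also converges to $0$, and a $\Gamma$-convergence argument based on strict convergence $v^{n_\ell}\to\bar v$ and continuity of $\nabla F$ transfers this to $\min(\text{\ref{eq:tr}}(\bar v,\nabla F(\bar v),\Delta_*))=0$, so $\bar v$ itself is a minimizer. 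The hardest step throughout is the exclusion of $m\ge 1$: it is the place where the integer structure \eqref{eq:tv_diff_inz_1d}, Assumption \ref{ass:d1}, and the geometry of the shrinking trust region all interact, and it is where the restriction to $d=1$ is essential.
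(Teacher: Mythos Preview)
Your overall architecture matches the paper's: feasibility/monotonicity are straightforward, Outcomes \ref{itm:finite_seq_tr} and \ref{itm:finite_seq_tr_contract} follow from Theorem \ref{thm:summary_sl_stationarity} and Corollary \ref{cor:step_accept}, and for Outcome \ref{itm:infinite_seq_sl} you correctly isolate the three tasks (existence and feasibility of accumulation points, strictness, \ref{eq:sl}-stationarity, plus the extra TR-solving claim). Your strictness argument via the integer gap \eqref{eq:tv_diff_inz_1d} and a Taylor remainder bound is essentially the paper's, and your $\Gamma$-convergence sketch for the final assertion is in the same spirit as the paper's (admittedly terse) passage.

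There is, however, a genuine gap in your \ref{eq:sl}-stationarity argument. You assert that strict convergence of integer-valued step functions forces $v^{n_k}$ to share the switching signature $(a_1,\dots,a_N)$ of $\bar v$ for large $k$, with $t_i^k\to t_i$. This is false. Take $\Omega=(0,1)$, $V=\{0,1,2\}$, $\bar v = 2\chi_{[1/3,2/3)}$ with signature $(0,2,0)$ and $\TV(\bar v)=4$, and let $v^k$ be $0$ on $[0,1/3)$, $1$ on $[1/3,1/3+1/k)$, $2$ on $[1/3+1/k,2/3-1/k)$, $1$ on $[2/3-1/k,2/3)$, $0$ on $[2/3,1)$. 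Then $\TV(v^k)=4$ and $\|v^k-\bar v\|_{L^1}=2/k\to 0$, so $v^k\to\bar v$ strictly, yet $v^k$ has four switches with signature $(0,1,2,1,0)$. Extra switches contradict neither strict nor $L^1$-convergence, so your parenthetical justification fails, and your ``transplant $d^h$ to $d^h_k$ anchored at $t_i^k$ preserving $\TV$'' step has no foundation: there need not be a single switch of $v^{n_k}$ at which the construction of Lemma \ref{lem:diniderivative_decrease_step} can be replayed with the same jump height.

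The paper sidesteps this entirely. Instead of transplanting, it uses $\bar v + d^h$ itself as a competitor in the trust-region subproblem centered at $v^{n_\ell}$: one has $\|\bar v + d^h - v^{n_\ell}\|_{L^1}\le h + \|\bar v - v^{n_\ell}\|_{L^1}$, and by strictness plus \eqref{eq:tv_diff_inz_1d} eventually $\TV(\bar v+d^h)=\TV(\bar v)=\TV(v^{n_\ell})$, so the TR objective at $\bar v+d^h$ reduces to $(\nabla F(v^{n_\ell}),\bar v+d^h-v^{n_\ell})_{L^2}$, which one splits and estimates using $v^{n_\ell}\to\bar v$ and $\nabla F(v^{n_\ell})\to\nabla F(\bar v)$. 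The paper then needs a separate argument (its ``Part 2'') to rule out that the accepted trust-region radii contract to zero along the subsequence; this is where the Taylor estimate under Assumption \ref{ass:d1} and a careful two-case analysis ($\TV(\hat v^*)<\TV(\bar v)$ versus $\TV(\hat v^*)=\TV(\bar v)$) enter. Your transplant idea, had it worked, would have fused these two parts into one cleaner step, but as stated it does not go through.
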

\begin{proof}
First, we note that a new iterate $v^n$ is produced (accepted)
by the inner loop of Algorithm \ref{alg:trm} if the condition 
$\pred(v^{n-1},\Delta^{n,k}) > 0$ and 
the sufficient decrease condition \eqref{eq:1d_accept}
are satisfied. Combining these estimates, we have
$J(v^n) = F(v^n) + \alpha\TV(v^n) < \TV(v^{n-1}) + \alpha\TV(v^{n-1})
= J(v^{n-1})$. This implies that the sequence $(J(v^n))_n$ is monotonously 
decreasing. Because the feasible sets of all trust-region subproblems
\eqref{eq:tr} are included in the feasible set of the problem \eqref{eq:p},
it follows inductively from Proposition \ref{prp:tr_well_defined}
that Line \ref{ln:trstep} produces $\bar{v}^{n,k} \in \F_{\eqref{eq:p}}$.

We first consider Outcome \ref{itm:finite_seq_tr}. We observe that
Algorithm \ref{alg:trm} terminates with iterate $v^{n-1}$ if and only if
for some outer iteration $n \in \N$ and inner iteration $k \in \N$
it holds that $\pred(v^{n-1},\Delta^{n,k}) = 0$.
The definition of $\pred(v^{n-1},\Delta^{n,k})$ in Lines \ref{ln:trstep}
and \ref{ln:pred} implies that $v^N \coloneqq v^{n-1}$ solves
$(\text{\ref{eq:tr}}(v^N,\nabla F(v^N),\Delta))$ for some $\Delta > 0$.

Next we consider Outcome \ref{itm:finite_seq_tr_contract}.
If the inner loop does not terminate finitely, Lemma \ref{lem:step_accept} 
yields that the current iterate $v^N = v^{n-1}$
is \ref{eq:sl}-stationary.

Therefore, it suffices to assume that Outcome \ref{itm:finite_seq_tr}
and Outcome \ref{itm:finite_seq_tr_contract} do not hold true
and to prove that Outcome \ref{itm:infinite_seq_sl} holds in
this case. The absence of Outcome \ref{itm:finite_seq_tr} and
Outcome \ref{itm:finite_seq_tr_contract} implies that for all
$n \in \N$ the inner loop terminates after finitely
many iterations and Algorithm \ref{alg:trm} produces an
infinite sequence of iterates $(v^n)_n$.

The sequence $(\TV(v^n))_n$ is bounded from below by zero
and from above because the sequence $(\frac{1}{\alpha} J(v^n))_n$ is decreasing and the sequence
$(F(v^n))_n$ is bounded from below. Therefore, the sequence
$(v^n)_n$ has a weak-$^*$ accumulation point in $BV((t_0,t_f))$. Lemma \ref{lem:discreteness_feasibility}
yields that all weak-$^*$ accumulation points are feasible for \eqref{eq:p}.

Next we show that weak-$^*$ accumulation points are strict.
We execute a contrapositive argument and assume that
$v$ is a weak-$^*$ accumulation point of $(v^n)_n$
with approximating subsequence $v^{n_\ell} \weakstarto v$
in $BV((t_0,t_f))$ that is not strict. This implies
that there exists a subsequence (for ease of notation
also denoted by $v^{n_\ell}$) such that
$\TV(v) < \liminf_{\ell \to \infty} \TV(v^{n_\ell})$.
We define
$\delta \coloneqq \liminf_{\ell \to \infty}\TV(v^{n_\ell}) - \TV(v)$.
We deduce that $\delta \ge 1$ from \eqref{eq:tv_diff_inz_1d}.
The sufficient decrease condition \eqref{eq:1d_accept}
on acceptance combined with the optimality of the solution
of Line \ref{ln:trstep} gives
\begin{gather}\label{eq:suffdec_atv_bnd}
\begin{aligned}
\frac{1}{\sigma}\ared(v^{n_\ell}, v^{n_\ell + 1,k}) 
&\ge (\nabla F(v^{n_\ell}),v^{n_\ell} - v^{n_\ell+1,k})_{L^2(\Omega)}
   + \alpha \TV(v^{n_\ell}) - \alpha \TV(v^{n_\ell+1,k})\\
&\ge (\nabla F(v^{n_\ell}), v^{n_\ell} - v)_{L^2(\Omega)}
   + \alpha \delta
\end{aligned}
\end{gather}
for infinitely many elements $\ell$ of the subsequence
and all inner iterations $k$, for which
$v$ is feasible for the trust-region subproblem.
For all inner iterations $k$ we obtain the
estimate
\[|(\nabla F(v^{n_\ell}),v^{n_\ell} - \tilde{v})_{L^2(\Omega)}|
\le \sqrt{\max_{i,j} |\nu_i - \nu_j|} \sqrt{
\Delta^{n_{\ell+1,k}}}
\|\nabla F(v^{n_\ell})\|_{L^2(\Omega)}\]
for all $\tilde{v}$ feasible for the trust-region
subproblem by virtue of the Cauchy--Schwarz inequality.
Thus there exists $k_0$ such that
for all inner iterations $k \ge k_0$ we have
$|(\nabla F(v^{n_\ell}),v^{n_\ell} - \tilde{v})_{L^2(\Omega)}|
\le \tfrac{1 - \sigma}{3 - \sigma}\alpha$
and $|F(v^{n_\ell}) - F(\tilde{v})|
\le \tfrac{1 - \sigma}{3 - \sigma}\alpha$.

Because $v^{n_\ell} \weakstarto v$ in 
$BV((t_0,t_f))$, we have $v^{n_\ell} \to v$ in $L^2((t_0,t_f))$ and
$(\nabla F(v^{n_\ell}), v^{n_\ell} - v)_{L^2(\Omega)} \to 0$
after potentially restricting to another subsequence,
implying that $v$ is feasible in iteration $k_0$ for all
$\ell \ge \ell_0$ for some $\ell_0$ large enough.
We deduce $\ared(v^{n_\ell}, v^{n_\ell + 1,k_0})
\ge \pred(v^{n_\ell},\Delta^{n_{\ell+1},k_0}) - 2 \alpha \frac{1 - \sigma}{3 - \sigma}$
and $\pred(v^{n_\ell},\Delta^{n_{\ell+1},k_0})
\ge \alpha - \alpha \frac{1 - \sigma}{3 - \sigma}$, implying that 
$\ared(v^{n_\ell}, v^{n_\ell + 1,k_0})
\ge \sigma\pred(v^{n_\ell},\Delta^{n_{\ell+1},k_0})$.
Thus a new step is accepted latemost
in inner iteration $k_0$ for all $\ell \ge \ell_0$.
Combining these results, we obtain that
$J(v^{n_\ell + 1}) \to -\infty$
for $\ell \to \infty$, which contradicts that $J$ is bounded
from below. Thus, every weak-$^*$ accumulation point of $(v^{n})_n$
is strict.

Next we prove that the weak-$^*$ and strict limit $v$ solves
$(\text{\ref{eq:tr}}(v,\nabla F(v),\Delta))$
for some $\Delta > 0$ if the trust-region radii upon acceptance
of a subsequence of $(v^{n_\ell})_\ell$ are bounded away from zero.
We restrict ourselves to such a subsequence and denote it also 
by $(v^{n_\ell})_\ell$ for ease of notation.
A lower bound on the trust-region radii upon acceptance is
$\underline{\Delta} \coloneqq \inf_{\ell\in\N} \min_{k} \Delta^{n_\ell+1,k} > 0$.
Because we may assume that $v^{n_\ell} \to v$ in $L^2((t_0,t_f))$
by restricting to another subsequence (also denoted by $v^{n_\ell}$),
it follows that $\nabla F(v^{n_\ell}) \to \nabla F(v)$ in $L^2((t_0,t_f))$.
Moreover, we restrict this subsequence further (also denoted by $v^{n_\ell}$)
such that $\lim_{\ell \to \infty} \TV(v^{n_\ell}) = \TV(v)$.
Combining these observations with \eqref{eq:tv_diff_inz_1d}, 
we deduce that there exists $\ell_0 \in \N$
such that for all $\ell \ge \ell_0$ we have
$\TV(v^{n_\ell}) = \TV(v)$.
Using this identity, we obtain
$\pred(v, \underline{\Delta})
 \le\pred(v^{n_\ell}, \min_k \Delta^{n_\ell + 1, k}) \to 0$.
The sequence of predicted reductions on acceptance
of the step $(\pred(v^{n_\ell}, \min_k \Delta^{n_\ell + 1, k}))_\ell$
tends to zero because otherwise we would obtain the contradiction
$J(v^{n_\ell+1}) \to -\infty$.
This implies that $\pred(v, \underline{\Delta}) = 0$, and consequently
$v$ solves $(\text{\ref{eq:tr}}(v,\nabla F(v),\underline{\Delta}))$
in this case.

Next we prove that $v$ is \ref{eq:sl}-stationary.
Again we consider a subsequence $(v^{n_\ell})_\ell$ such that
$v^{n_\ell}\weakstarto v$ that is also strictly convergent.
We restrict $(v^{n_\ell})_\ell$ further
to a pointwise a.e.\ convergent 
subsequence, also denoted by $(v^{n_\ell})_\ell$,
such that
$\lim_{\ell \to \infty} \TV(v^{n_\ell}) = \TV(v)$.
This implies that there exists $\ell_0 \in \N$ such that
for all $\ell \ge \ell_0$ we have $\TV(v^{n_\ell}) = \TV(v)$.

Because $\TV(v) < \infty$, we can use the representation
given by Proposition \ref{prp:vta_representation} and
deduce that there are $N \in \N$, $a \in \{\nu_1,\ldots,\nu_M\}^N$
with $a_i \neq a_{i+1}$ for all $i \in \{1,\ldots,N-1\}$,
and $0 = t_0 < \ldots < t_N = t_f$ such that
\[ v = v^{t,a} = \chi_{(t_0,t_1)} a_1
+ \sum_{i=1}^{N-1} \chi_{[t_i,t_{i+1})} a_{i+1}. \]

Having obtained this characterization, we use a contradiction
argument to prove that $v$ is \ref{eq:sl}-stationary by assuming that
the claim does not hold true, invoking Lemma 
\ref{lem:diniderivative_decrease_step} and obtaining a contradiction.
To this end, we assume that \ref{eq:sl}-stationarity is violated
for some switching location $t_i$ of $v$.
This implies that the prerequisites of Lemma \ref{lem:diniderivative_decrease_step}
are satisfied, and we obtain that there exist 
$\varepsilon > 0$ and $h_0 > 0$ such that for all $h \le h_0$,
there exists $d^h \in L^1((t_0,t_f))$ with $\|d^h\|_{L^1(\Omega)} = h$,
$v + d^h \in \F_{\eqref{eq:p}}$, and
$(\nabla F(v), d^h)_{L^2(\Omega)} \le -\frac{\varepsilon}{2}h$.
Moreover, by construction of $d^h$ in the
proof of Lemma \ref{lem:diniderivative_decrease_step} 
and choosing $h_0 < \frac{1}{2}\min_i\{t_{i+1} - t_i\}$
it also follows that $\TV(v + d^h) = \TV(v) = \TV(v^{n_\ell})$.

The remaining argument is the following. We first show \textbf{Part 1}
that we obtain a contradiction if the trust-region radius upon acceptance of
the iterates $n_\ell + 1$ is larger than $2 h$ for some $h \le h_0$
infinitely often. Then, we show \textbf{Part 2} that the trust-region radius
cannot contract to zero for the subsequence $(v^{n_\ell})_\ell$.
Note that $\varepsilon$ and $h_0$ chosen above depend on the limit
of the subsequence. 

\textbf{Part 1:} To establish the contradiction, we assume that
trust-region radius upon acceptance of the iterates $n_\ell + 1$ is
larger than $2 h$ for some $h \le h_0$ infinitely often. To simplify
the argument, we restrict it
to the subsequence of $(n_\ell)_\ell$, for ease of notation denoted by the 
same symbol, such that the trust-region radius upon acceptance of the
iterates $n_\ell + 1$ is larger than $2 h$. Because $v^{n_\ell} \to v$
in $L^1(\Omega)$, there exists $\ell_1 \ge \ell_0$ such that for all $\ell \ge \ell_1$ it holds that $\|v^{n_\ell} - v\|_{L^1(\Omega)} \le h$.
Thus, for iteration $n_\ell + 1$ and all inner iterations $k$, we obtain
\[ \min\ (\text{TR}(v^{n_\ell},\nabla F(v^{n_\ell}),\Delta^{n_\ell+1,k}))
   \le 
   \left\{
   \begin{aligned}  
   (\nabla F(v^{n_\ell}), v + d^{\Delta^{n_\ell + 1,k}/2} - v^{n_\ell})_{L^2(\Omega)}
   &\text{ if }\Delta^{n_\ell + 1,k} \le 2 h_0, \\
   (\nabla F(v^{n_\ell}), v + d^{h_0} - v^{n_\ell})_{L^2(\Omega)}
   &\text{ if } \Delta^{n_\ell + 1,k} \ge 2 h_0.
   \end{aligned}
   \right.
\]

For the accepted iterate, we obtain the estimate
\begin{align*}
   J(v^{n_\ell}) - J(v^{n_\ell + 1})
   &\ge \sigma
   (\nabla F(v^{n_\ell}), v^{n_\ell} - v - d^{h})_{L^2(\Omega)}\\
   &= -\sigma(\nabla F(v), d^{h})_{L^2(\Omega)}
     +\sigma(\nabla F(v) - \nabla F(v^{n_\ell}), d^{h})_{L^2(\Omega)}
     +\sigma(\nabla F(v^{n_\ell}), v^{n_\ell} - v)_{L^2(\Omega)}.
\end{align*}
Because $v^{n_\ell} \to v$, the last two terms tend to zero,
and there exists $\ell_2 \ge \ell_1$ such that for all $\ell \ge \ell_2$
it holds that
\[ J(v^{n_\ell}) - J(v^{n_\ell + 1})
   \ge -\frac{\sigma}{2}(\nabla F(v), d^{h})_{L^2(\Omega)} \ge \frac{1}{4}\sigma\varepsilon h. \]
This implies the contradiction $J(v^{n_\ell + 1}) \to -\infty$.

\textbf{Part 2:} We show that the trust-region radii upon acceptance
of the iterates $v^{n_\ell + 1}$ cannot contract to zero.
Let $0 < h \le h_0$ be given. Then there exists $\ell_3 \ge \ell_0$
such that for all $\ell \ge \ell_3$ it holds that
$\|v - v^{n_\ell}\|_{L^2(\Omega)} \le h^2$ and
$\|\nabla F(v) - \nabla F(v^{n_\ell})\|_{L^2(\Omega)} \le h^2$.
Moreover, there exists a constant
$c_1 > 0$ that can be chosen independently of $h$ and $\ell$
such that
\begin{gather}\label{eq:vdh_vnell_lip_estim}
\left|(\nabla F(v),d^h)_{L^2(\Omega)}
- (\nabla F(v^{n_\ell}),v + d^h - v^{n_\ell})_{L^2(\Omega)}\right|
\le h^2 \|d^h\|_{L^2(\Omega)} + \|\nabla F(v^{n_\ell})\|_{L^2(\Omega)} h^2
\le c_1 h^2.
\end{gather}

Let $\Delta^* \in \{ \Delta^0 2^{-j}\,|\,j \in \N\}$.
Let $v^{*}$ denote a minimizer of the problem
$(\text{\ref{eq:tr}}(v^{n_\ell},\nabla F(v^{n_\ell}),\Delta^*))$
with the fixed extra constraint $\TV(\tilde{v}) = \TV(v)$.
Then Taylor's theorem (see Proposition \ref{prp:lin_approx}) 
and Assumption \ref{ass:d1} imply
\begin{align*}
F(v^{n_\ell}) - F(v^{*})
\ge -(\nabla F(v^{n_\ell}),v^{*} - v^{n_\ell})_{L^2(\Omega)}
-c_2 \|v^{*} - v^{n_\ell}\|_{L^1(\Omega)}^2
\end{align*}
for some $c_2 > 0$. 
We can choose $\Delta^*$ sufficiently small (choose $j$ sufficiently large)
such that the inequalities
$\Delta^* \le h_0$, $(1-\sigma)\frac{\varepsilon}{4}\Delta^*
-(\Delta^*)^2(0.25 c_1 + c_2) > 0$, and $(\|\nabla F(v)\|_{L^2(\Omega)} + (\Delta^*)^2)(\max_i \nu_i - \min_i \nu_i)\sqrt{\Delta^*} \le \alpha$ hold true.

To combine these considerations, we choose the $\ell_3 \in \N$
introduced above such that \eqref{eq:vdh_vnell_lip_estim} holds with
the choice $h = \Delta^*$. For all $\ell \ge \ell_3$, we deduce
\begin{align*}
-(1-\sigma)(\nabla F(v^{n_\ell}),v^{*} - v^{n_\ell})_{L^2(\Omega)}
-c_2 \|v^{*} - v^{n_\ell}\|^2_{L^1(\Omega)}
&\ge -(1-\sigma)(\nabla F(v^{n_\ell}),v^{n_\ell} + d^{\Delta^*/2} - v^{n_\ell})_{L^2(\Omega)}
-c_2 (\Delta^*)^2\\
&\ge -(1-\sigma)(\nabla F(v),d^{\Delta^*/2})_{L^2(\Omega)}
-(\Delta^*)^2(0.25 c_1 + c_2) \\
&\ge (1-\sigma)\frac{\varepsilon}{4}\Delta^*
-(\Delta^*)^2(0.25 c_1 + c_2) \ge 0,
\end{align*}
where we have used \eqref{eq:vdh_vnell_lip_estim} with $h = \Delta^*$
in the second inequality.
It follows that
\begin{gather}\label{eq:pred_deltastar}
-(\nabla F(v^{n_\ell}),v^{*} - v^{n_\ell})_{L^2(\Omega)}
\ge 
-(\nabla F(v^{n_\ell}),v + d^{\Delta^* / 2} - v^{n_\ell})_{L^2(\Omega)}
\ge \frac{\varepsilon}{4} \Delta^* - c_1 (\Delta^*)^2
> 0.
\end{gather}
Moreover, for all $\tilde{v}$ feasible for $(\text{\ref{eq:tr}}(v^{n_\ell},\nabla F(v^{n_\ell}),\Delta^*))$
we have 
\[ (\nabla F(v^{n_\ell}), \tilde{v} - v^{n_\ell})_{L^2(\Omega)} \le \|\nabla F(v^{n_\ell})\|_{L^2(\Omega)}\|\tilde{v} - v^{n_\ell}\|_{L^2(\Omega)}
\le (\|\nabla F(v)\|_{L^2(\Omega)} + (\Delta^*)^2)(\max_i \nu_i - \min_i \nu_i)\sqrt{\Delta^*}
\]
Thus the assumed lower bound on $\alpha$ implies that any solution $\hat{v}^*$
of $(\text{\ref{eq:tr}}(v^{n_\ell},\nabla F(v^{n_\ell}),\Delta^*))$
also satisfies $\TV(\hat{v}^*) \le \TV(v^*) = \TV(v^{n_\ell}) = \TV(v)$.
To complete the argument, we distinguish the cases
\textbf{Case 2a} $\TV(\hat{v}^*) < \TV(v)$ and \textbf{Case 2b}
$\TV(\hat{v}^*) = \TV(v)$.

\textbf{Case 2a:} If $\TV(\hat{v}^*) < \TV(v^{n_\ell})$, then
$\TV(v^{n_\ell}) - \TV(\hat{v}^*) \ge 1$ by \eqref{eq:tv_diff_inz_1d}.
By virtue of Assumption \ref{ass:d1} and
Proposition \ref{prp:discretev_L1_L2_inequality}, we
may choose $\Delta^*$ small enough and
$\ell_4 \ge \ell_3$ such that for all $\ell \ge \ell_4$
it follows that
$|F(\hat{v}^{*}) - F(v^{n_\ell})| < \frac{1 - \sigma}{2}$
and
$|\sigma(\nabla F(v^{n_\ell}), \hat{v}^* - v^{n_\ell})_{L^2(\Omega)}|
< \frac{1 - \sigma}{2}$. We deduce
\begin{align*}
\ared(v^{n_\ell},\hat{v}^{*}) &=
F(\hat{v}^{*}) - F(v^{n_\ell}) + \TV(v^{n_\ell}) - \TV(\hat{v}^*) \\
&\ge -\frac{1 - \sigma}{2} + \TV(v^{n_\ell}) - \TV(\hat{v}^*)
&&\enskip\text{\tiny{$|F(\hat{v}^{*}) - F(v^{n_\ell})| < \frac{1 - \sigma}{2}$}} \\
&\ge -(1 - \sigma) - \sigma(\nabla F(v^{n_\ell}), \hat{v}^* - v^{n_\ell})_{L^2(\Omega)} + \TV(v^{n_\ell}) - \TV(\hat{v}^*) 
&&\enskip\text{\tiny{$|\sigma(\nabla F(v^{n_\ell}), \hat{v}^* - \nabla F(v^{n_\ell}))_{L^2(\Omega)}|
< \frac{1 - \sigma}{2}$}}\\
&\ge - \sigma(\nabla F(v^{n_\ell}), \hat{v}^* - v^{n_\ell})_{L^2(\Omega)} + \sigma(\TV(v^{n_\ell}) - \TV(\hat{v}^*)) 
&&\enskip\text{\tiny{$\TV(v^{n_\ell}) - \TV(\hat{v}^*) \ge 1$}}\\
&= \sigma \pred(v^{n_\ell},\hat{v}^{*}).
\end{align*}
Let $j$ be such that $\Delta^* = \Delta^0 2^{-j}$.
Then, this argument implies that at the latest the $j$th iteration of
the inner loop is accepted for all $\ell \ge \ell_4$
if $\TV(\hat{v}^*) < \TV(v)$.

\textbf{Case 2b:} If $\TV(\hat{v}^*) = \TV(v)$, then
we may consider $v^* = \hat{v}^*$, and for all $\ell \ge \ell_3$
it follows that in the inner iteration
$k$ such that $\Delta^0 2^{-k} = \Delta^*$ we have that
\begin{align*}
\ared(v^{n_\ell},v^{*}) &\ge F(v^{n_\ell}) - F(v^{*})\\
&\ge
-\sigma (\nabla F(v^{n_\ell}),v^{*} - v^{n_\ell})_{L^2(\Omega)}
-(1-\sigma)(\nabla F(v^{n_\ell}),v^{*} - v^{n_\ell})_{L^2(\Omega)}
-c_2 \|v^{*} - v^{n_\ell}\|_{L^1(\Omega)}^2\\
&\ge -\sigma (\nabla F(v^{n_\ell}),v^{*} - v^{n_\ell})_{L^2(\Omega)}
 = \sigma\pred(v^{n_\ell}, \Delta^*).
\end{align*} 
Inserting \eqref{eq:pred_deltastar}, it follows that
$\pred(v^{n_\ell},\Delta^{*})
\ge \frac{\varepsilon}{4} \Delta^* - c_1 (\Delta^*)^2 > 0$.
Let $j$ be such that $\Delta^* = \Delta^0 2^{-j}$. The argument
above implies that at the latest the $j$th iteration of the inner loop
is accepted for all $\ell \ge \ell_3$
if $\TV(\hat{v}^*) = \TV(v)$.

Combining these cases means that at the latest the $j$th iteration
of the inner loop is accepted for all $\ell \ge \ell_4$, which
implies that the trust-region radius upon acceptance
is eventually always greater than or equal to
$\Delta^*$, which settles \textbf{Part 2} and
implies $J(v^{n_{\ell+1}})\to-\infty$
as established in \textbf{Part 1}.
This completes the contradiction under the assumption that
$v$ is not \ref{eq:sl}-stationary. Consequently, $v$ has to be
\ref{eq:sl}-stationary, which completes the proof.
\end{proof}

\begin{corollary}
Under the prerequisites of Theorem \ref{thm:pure_tr_asymptotics}
it holds that the final iterate produced by Algorithm \ref{alg:trm}
and---in the absence of finite termination---all
limit points of the iterates produced by Algorithm \ref{alg:trm}
are \ref{eq:sl}-stationary.
\end{corollary}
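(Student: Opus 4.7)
The plan is to reduce the corollary to a direct case analysis of the three mutually exclusive outcomes already catalogued in Theorem~\ref{thm:pure_tr_asymptotics}; no new analytical work is needed, only a careful matching of the corollary's language to the theorem's conclusions together with one appeal to the stationarity results of Section~\ref{sec:first_order}.

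First I would note that Algorithm~\ref{alg:trm} either terminates finitely (producing a last iterate $v^N$) or generates an infinite sequence. The finite case splits into Outcome~\ref{itm:finite_seq_tr} and Outcome~\ref{itm:finite_seq_tr_contract} of Theorem~\ref{thm:pure_tr_asymptotics}. Under Outcome~\ref{itm:finite_seq_tr_contract}, the theorem already asserts that $v^N$ is \ref{eq:sl}-stationary, so nothing remains to show. Under Outcome~\ref{itm:finite_seq_tr}, the theorem tells us that $v^N$ minimizes $(\text{TR}(v^N,\nabla F(v^N),\Delta))$ for some $\Delta > 0$; applying Lemma~\ref{lem:sl_nec_opt_tr} (equivalently, the second claim of Theorem~\ref{thm:summary_sl_stationarity}) to $v^N$ then yields \ref{eq:sl}-stationarity. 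This dispenses with the ``final iterate'' part of the corollary.

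For the ``absence of finite termination'' part, finiteness is excluded so only Outcome~\ref{itm:infinite_seq_sl} can occur. The theorem states that $(v^n)_n$ has at least one weak-$^*$ accumulation point in $BV((t_0,t_f))$ and that every such accumulation point is feasible, strict, and \ref{eq:sl}-stationary. The only bookkeeping point worth checking is that any limit point of the iterates in the sense intended by the corollary arises as such a weak-$^*$ accumulation point: since the iterates are uniformly bounded in $BV((t_0,t_f))$ (because $(J(v^n))_n$ is monotonically decreasing and $F$ is bounded below, so $(\TV(v^n))_n$ is bounded), any $L^1$-convergent subsequence can be extracted from a weak-$^*$ convergent subsequence in $BV$, and Outcome~\ref{itm:infinite_seq_sl} applies verbatim.

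I expect no genuine obstacle: all the substantive work---the decrease argument, the strictness of accumulation points, and the contradiction argument establishing \ref{eq:sl}-stationarity via Lemma~\ref{lem:diniderivative_decrease_step}---has been carried out in Theorem~\ref{thm:pure_tr_asymptotics}. The corollary is essentially a restatement emphasizing the stationarity conclusion that is common to all three outcomes, and the proof therefore amounts to at most a few lines of case distinction with pointers back to the theorem and to Theorem~\ref{thm:summary_sl_stationarity}.
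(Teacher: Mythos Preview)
Your proposal is correct and follows essentially the same route as the paper: the paper's proof is a one-line reference to Theorem~\ref{thm:pure_tr_asymptotics} and Theorem~\ref{thm:summary_sl_stationarity}, and your case analysis simply unpacks that reference. The only minor redundancy is that Outcome~\ref{itm:finite_seq_tr} of Theorem~\ref{thm:pure_tr_asymptotics} already asserts \ref{eq:sl}-stationarity of $v^N$ directly, so your appeal to Lemma~\ref{lem:sl_nec_opt_tr} there is not needed.
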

\begin{proof}
This follows from Theorem \ref{thm:pure_tr_asymptotics} and
Theorem \ref{thm:summary_sl_stationarity}.
\end{proof}

\begin{remark}
In particular, we obtain that if the trust-region radius
is bounded away from zero or the algorithm terminates finitely,
then the limit is not only \ref{eq:sl}-stationary
but also a minimizer of the trust-region subproblem for a 
strictly positive trust-region radius.
\end{remark}

\begin{remark}\label{rem:active_set_idea}
This analysis hinges on the fact that after finitely many iterations
(and potentially passing to a subsequence),
the value of $\TV(v^n)$ does not change anymore for the iterates $v^n$
produced by Algorithm \ref{alg:trm}.
Once this has happened, Assumption \ref{ass:d1} and Taylor's theorem are
employed to prove that the the limit points of $(v^n)_n$ are
\ref{eq:sl}-stationary.

Compared with algorithms for nonlinear optimization, this can be viewed as
the active set settling down after finitely many iterations,
after which a first-order condition is achieved when restricting to
the fixed active set. A fixed active set here corresponds to
a fixed sequence of heights that are taken by the resulting control
along the interval $(t_0,t_f)$. While the sequence
of heights is fixed, the only quantities that remain to
be optimized are the exact locations at which the height changes.
In this regard, the problem \eqref{eq:sl} is a subproblem that can be
solved for a fixed active set, that is, a fixed switching order.
If \eqref{eq:sl} can be tackled efficiently with
the methods of nonlinear optimization, it can be
invoked to ensure that the inner
loop terminates finitely.
\end{remark}

\begin{remark}
After the the total variation has converged, the switching location
optimization is closely related to \emph{bang-bang}
control, for which a vast literature exists, in particular  on sufficient conditions for bang-bang optimal controls
\cite{maurer2004second,casas2012second}.
Algorithm \ref{alg:trm} may be augmented by using recently developed
techniques for switching point optimization (see, e.g., 
\cite{flasskamp2013discretized,stellato2016optimal,ruffler2016optimal,de2019mixed,de2020sparse})
to improve convergence.
In order to determine small modifications that improve the switching structure,
the \emph{mode insertion gradient} as proposed in \cite[Section 4]{ruffler2016optimal} may 
be a beneficial option to replace some of the, presumably more expensive, trust-region subproblem solves.
\end{remark}

\subsection{Approximation of the Trust-Region Subproblem}\label{sec:tr_trip_approx}
For implementations of Algorithm \ref{alg:trm}, it is important that
the trust-region subproblems (TR) can be approximated with (TRIP) by 
choosing a suitable partition $\mathcal{T}$ of the domain $\Omega$.
While a thorough analysis of this relationship and suitable refinement 
strategies that yield approximation results for discretized versions
of Algorithm \ref{alg:trm} are beyond the scope of this article,
we show an approximation result for one-dimensional domains and a class
of uniformly refined equidistant grids that lead to a practical
implementation of Algorithm \ref{alg:trm}.

With a slight abuse of notation, we consider $\Omega \coloneqq [0,T)$
for simplicity of the presentation.
Let $\tilde{g} \in L^\infty([0,T))$, $\Delta > 0$, and let $\tilde{v}
\in \F_{\eqref{eq:p}} \cap BV([0,T))$ be given.
We consider a fixed
uniform discretization of $[0,T)$ for $N \in \mathbb{N}$ with $h = N^{-1}$, $N = 2^k$
for some $k \in \mathbb{N}$ and $t_i = i h$ for $i \in \{0,\ldots,N\}$. 
We consider the relationship between the optimization problems
\begin{gather}\label{eq:tr_var}
\begin{aligned}
\min_{v}\ &\Upsilon(v) \coloneqq \int_0^T \tilde{g}(s)v(s)\,\dd s + \alpha\TV(v) \\
\text{ s.t.\ }
&\|v - \tilde{v}\|_{L^1([0,T))} \le \Delta,\\
&v(s) \in V \text{ for a.a.\ } s \in [0,T)
\end{aligned}
\end{gather}
and
\begin{gather}\label{eq:trip_var}
\begin{aligned}
\min_{v,b_1,\ldots,b_N}\ &\Upsilon(v)\\
\text{s.t.\ }
& \|v - \tilde{v}\|_{L^1([0,T))} \le \Delta,\\
& v(s) = \sum_{i=1}^N b_i \chi_{[t_{i-1}, t_i)}(s) \text{ for a.a.\ } s \in [0,T) \text{ with } b_1,\ldots,b_N \in V.
\end{aligned}
\end{gather}
One can easily  see that \eqref{eq:tr_var} is equivalent to 
$(\text{TR}(\tilde{v},\tilde{g},\Delta))$ by adding an
offset for the two constant terms that  depend only on $\tilde{v}$. 
Moreover, \eqref{eq:trip_var} is the instance of \eqref{eq:trip}
derived from $(\text{TR}(\tilde{v},\tilde{g},\Delta))$
by choosing the uniform discretization introduced above and the
constants as described in section \ref{sec:trip}.
The feasible set of \eqref{eq:trip_var} is a subset of the feasible set of
\eqref{eq:tr_var}, and thus
\begin{gather}\label{eq:min_trv_tripv_estimate}
\min {\eqref{eq:tr_var}} \coloneqq \min \{ \Upsilon(v) \,|\, v \text{ feasible for } \eqref{eq:tr_var} \}
\le \min \{ \Upsilon(v) \,|\, v \text{ feasible for } \eqref{eq:trip_var}\}
\eqqcolon \min {\eqref{eq:trip_var}}.
\end{gather}

We consider the following setting. We assume that $\tilde{v}$ is a 
piecewise constant function that is (also) defined on an equidistant 
partition into $2^{k_0}$ intervals for $k_0 \in \mathbb{N}$.
This assumption can (inductively) be satisfied in implementations
of Algorithm \ref{alg:trm} if $v^0$ is defined on such a partition
and all trust-region subproblems are replaced by IPs of the form \eqref{eq:trip_var}. 
Moreover, let $v$ be a minimizer of \eqref{eq:tr_var}.
Then, there exists $n \in \mathbb{N}$ such that
\begin{gather}\label{eq:disc_minimizer}
v = \sum_{i=1}^n a_i \chi_{[s_{i-1}, s_i)}
\end{gather}
for $0 = s_0 < \ldots < s_n = T$ and $a_1,\ldots,a_n \in V$ with $a_i \neq a_{i+1}$
for all $i \in \{1,\ldots,n\}$ by virtue of Proposition \ref{prp:vta_representation}.

We consider the setting given by $N$, $h$, $\tilde{g}$, $\tilde{v}$,
$\Delta$, the uniform discretization, \eqref{eq:tr_var}, and 
\eqref{eq:trip_var} described above. In this setting, we show
the following proposition.
\begin{proposition}\label{prp:approx_relationship_tr_trip}
Let $\tilde{g} \in L^\infty([0,T))$, $\Delta > 0$, and
$\tilde{v} \in \F_{\eqref{eq:p}} \cap BV([0,T))$. Let
$\tilde{v}$ be a piecewise constant function that is defined on an
equidistant partition into $2^{k_0}$ intervals for some $k_0 \in \mathbb{N}$.
Let $v$ in \eqref{eq:disc_minimizer} be a minimizer of \eqref{eq:tr_var}.
There exist $N_0 \in \mathbb{N}$ such that if $N \ge N_0$,
then there exist $(v^h,b_1^h,\ldots,b_N^h) \in BV([0,T)) \times V^N$ 
feasible for \eqref{eq:trip_var} such that
\begin{gather}\label{eq:approximation_tr_trip_estimate}
\min {\eqref{eq:trip_var}} \le
\min {\eqref{eq:tr_var}}
+ 2 h \|\tilde{g}\|_{L^\infty([0,T))} \sum_{i=1}^n |a_i|.
\end{gather}
\end{proposition}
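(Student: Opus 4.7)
The plan is to construct an explicit feasible candidate $v^h$ for \eqref{eq:trip_var} by rounding each switching location of the minimizer $v = \sum_{i=1}^n a_i \chi_{[s_{i-1},s_i)}$ to one of the two adjacent grid points of the uniform mesh. Since the feasible set of \eqref{eq:trip_var} is a subset of the feasible set of \eqref{eq:tr_var}, it will follow that $\min \eqref{eq:trip_var} \le \Upsilon(v^h)$, and the bound will be completed by controlling $\Upsilon(v^h) - \Upsilon(v)$. I would pick $N_0$ large enough so that for all $N \ge N_0$, first $N \ge 2^{k_0}$ (so $\tilde v$ is automatically piecewise constant on the $h$-grid), and second $h < \min_{1\le i \le n-1}(s_i - s_{i-1})/2$, so each interior switching location $s_i$ lies in the interior of a unique grid cell $[t_{j_i-1}, t_{j_i})$ and consecutive such cells are separated by at least one empty grid cell.

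For each switch $s_i$, let $c_i \in V$ denote the (single) value that $\tilde v$ takes on the cell $[t_{j_i-1}, t_{j_i})$ and set $\alpha_i = (s_i - t_{j_i-1})/h \in [0,1)$. The contribution of this cell to $\|v - \tilde v\|_{L^1([0,T))}$ equals the convex combination
\[ h\bigl(\alpha_i |a_i - c_i| + (1-\alpha_i)|a_{i+1} - c_i|\bigr), \]
whereas rounding $s_i$ to $t_{j_i-1}$ (respectively to $t_{j_i}$) replaces it by $h|a_{i+1} - c_i|$ (respectively $h|a_i - c_i|$). Since a convex combination is at least as large as its minimum, I can select $\hat s_i \in \{t_{j_i-1}, t_{j_i}\}$ so that the per-cell contribution does not increase. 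On all cells containing no switch, $v^h$ coincides with $v$. Setting $v^h := a_1 \chi_{[0,\hat s_1)} + \sum_{i=1}^{n-1} a_{i+1}\chi_{[\hat s_i, \hat s_{i+1})}$ (with $\hat s_n = T$), the cell separation ensures $\hat s_1 < \ldots < \hat s_{n-1}$, so $v^h$ is piecewise constant on the $h$-grid with values in $V$. Summation over all cells yields $\|v^h - \tilde v\|_{L^1} \le \|v - \tilde v\|_{L^1} \le \Delta$, giving feasibility for \eqref{eq:trip_var}.

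For the objective estimate, note that $v^h$ attains the same ordered sequence of values $a_1, \ldots, a_n$ as $v$, so $\TV(v^h) = \sum_{i=1}^{n-1}|a_{i+1}-a_i| = \TV(v)$. The difference $v^h - v$ is supported on the union of intervals with endpoints $\{s_i, \hat s_i\}$, on which its value has modulus $|a_{i+1}-a_i|$ and the length is at most $h$. Hence
\[ \biggl|\int_0^T \tilde g(v^h - v)\,\mathrm ds\biggr| \le \|\tilde g\|_{L^\infty([0,T))} \|v^h - v\|_{L^1} \le h\|\tilde g\|_{L^\infty([0,T))} \sum_{i=1}^{n-1}|a_{i+1} - a_i| \le 2h\|\tilde g\|_{L^\infty([0,T))}\sum_{i=1}^n |a_i|, \]
using the triangle inequality $|a_{i+1}-a_i| \le |a_{i+1}|+|a_i|$ in the last step. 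Combining with $\TV(v^h) = \TV(v)$ gives $\Upsilon(v^h) \le \Upsilon(v) + 2h\|\tilde g\|_{L^\infty}\sum_i|a_i|$, which is exactly the required estimate \eqref{eq:approximation_tr_trip_estimate}.

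The main subtlety is preserving the trust-region constraint $\|\cdot - \tilde v\|_{L^1}\le \Delta$: naive nearest-grid-point rounding can violate it when the constraint is active at $v$. The key observation that makes the proof go through is that for $N \ge 2^{k_0}$ the coarser function $\tilde v$ is constant on each grid cell, so the per-cell $L^1$-contribution is a convex combination of the two possible rounded contributions and at least one rounding direction cannot make it worse.
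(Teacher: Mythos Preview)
Your proof is correct and follows the same overall strategy as the paper: round each switching location $s_i$ of the minimizer $v$ to one of the two adjacent grid points, choosing the direction so that the $L^1$-distance to $\tilde v$ does not increase, and then bound the linear part of the objective via the measure of the symmetric difference of the intervals. The paper makes the rounding choice by a sequential/inductive criterion (comparing cumulative $L^1$-distances on $[0,t_{j_u(i)})$) and only concludes $\TV(v^h)\le\TV(v)$, whereas your per-cell convexity observation---that the cell contribution $h\bigl(\alpha_i|a_i-c_i|+(1-\alpha_i)|a_{i+1}-c_i|\bigr)$ dominates the smaller of $h|a_i-c_i|$ and $h|a_{i+1}-c_i|$---lets you decide each $\hat s_i$ independently and obtain $\TV(v^h)=\TV(v)$ directly; this is a slightly cleaner execution of the same idea, at the price of a marginally stronger requirement on $N_0$.
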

\begin{proof}
We choose $N_0 \ge 2^{k_0}$ large enough such that for all $i \in \{1,\ldots,n\}$
there exists $j \in \{1,\ldots,N_0\}$ with $s_{i-1} < t_{j} < s_{i}$.

We define an approximation of $v$ on the equidistant grid $t_0,\ldots,t_N$
as follows.
For $i \in \{0,\ldots,n\}$, we define the indices $j(i)$ iteratively.
We set $j(0) = 0$. For $i > 1$, let $j_\ell(i) \coloneqq \max\{j \in \{1,\ldots,N\}\,|\,t_j \le s_i\}$
and $j_u(i) \coloneqq \min\{j \in \{1,\ldots,N\}\,|\,s_i < t_j\}$.
We choose $j(i) = j_\ell(i)$ if $i + 1 \le n$ and
\begin{multline*}
\int_{0}^{t_{j_u(i)}}
   \left|\sum_{k=1}^{i-1} a_k \chi_{[t_{j(k - 1)}, t_{j(k)})}(s)
   + a_i \chi_{[t_{j(i - 1)}, t_{j_\ell(i)})}
   + a_{i+1} \chi_{[t_{j_\ell(i)}, t_{j_u(i)})}
   - \tilde{v}(s)\right|\dd s\\
   \le 
   \int_{0}^{t_{j_u(i)}}
   \left|\sum_{k=1}^{i-1} a_k \chi_{[t_{j(k - 1)}, t_{j(k)})}(s)
   + a_i \chi_{[t_{j(i - 1)}, t_{j_u(i)})}
   - \tilde{v}(s)\right|\dd s
\end{multline*}
hold; otherwise we choose $j(i) = j_u(i)$. In particular, it holds
that $j(n) = N$. We define
\[ v^h \coloneqq \sum_{i=1}^n a_i \chi_{T_i}
\text{ with }
T_i \coloneqq [t_{j(i-1)},t_{j(i)}) \text{ for all }
i \in \{1,\ldots,n\}.
\]
We note that $v^h$ is constructed on a refinement of the partition
on which $\tilde{v}$ is defined. Combining this with the 
iterative construction of the $j(i)$ and hence the $T_i$, it follows
inductively that
\[ \|\tilde{v} - v^h\|_{L^1([0,t_{j(i)})} \le \|\tilde{v} - v\|_{L^1([0,t_{j(i)})}
\]
for all $i \in \{1,\ldots,n\}$.
This proves $\|\tilde{v} - v^h\|_{L^1([0,T))} \le\Delta$
and hence the feasibility of $v^h$.

Let $d_{\text{lin}} \coloneqq \Upsilon(v^h) + \alpha \TV(v^h)  - (\Upsilon(v) + \alpha \TV(v))$.
Then, we estimate
\begin{align*}
d_{\text{lin}}
&\le \left|\int_0^Tg(s)\sum_{i=1}^n a_i(\chi_{T_i}(s) - \chi_{[s_{i-1},s_i)}(s))\,\dd s\right|\\
&\le \|g\|_{L^\infty((0,T))} \sum_{i=1}^n |a_i|
  \int_0^T \left|\chi_{T_i}(s) - \chi_{[s_{i-1},s_i)}(s)\right|\,\dd s\\
&\le \|g\|_{L^\infty((0,T))} \sum_{i=1}^n |a_i| \lambda(T_i \Delta [s_{i-1},s_i))\\
&\le 2 h \|g\|_{L^\infty((0,T))} \sum_{i=1}^n |a_i|,
\end{align*}
where $T_i \Delta [s_{i-1},s_i)$ denotes the symmetric difference
between the sets $T_i$ and $[s_{i-1},s_i)$. Moreover,
the estimate $\lambda(T_i \Delta [s_{i-1},s_i)) \le 2h$ in the
last inequality holds because the construction of $j(i)$
implies $|t_{j(i)} - s_i| \le h$ for
all $ i\in \{0,\ldots,n\}$.
Moreover, the construction of $v^h$ implies
that $\alpha \TV(v^h) \le \alpha \TV(v)$.

We combine the estimate on $d_{\text{lin}}$ and
the inequality $\alpha \TV(v^h) \le \alpha \TV(v)$ with the
inequality \eqref{eq:min_trv_tripv_estimate}
to obtain the estimate \eqref{eq:approximation_tr_trip_estimate},
which closes the proof.
\end{proof}

Because of the dependence of $N_0$ in
Proposition \ref{prp:approx_relationship_tr_trip} on $2^{k_0}$
it is  conceivable that very fine grids are necessary in an
implementation with approximation error control. More work is necessary to
develop more sophisticated approximations, also in the one-dimensional
case. 
Due to these considerations, we solve instances
of the IP formulation of \eqref{eq:trip_var} for uniform
discretizations for different values of $N$ in our computational 
experiments below to demonstrate the efficacy of the proposed idea.

\section{Computational Experiments}\label{sec:experiments}

To assess our algorithm computationally,
we use the following optimization problem.
\begin{gather}\label{eq:ex_p}
 \min \frac{1}{2}\|K v - f\|_{L^2((t_0,t_f))}^2 + \alpha\TV(v) \text{ s.t. } v(t) \in \{-2,-1,0,1,2\} \text{ a.e.}
\end{gather} 
Here, $t_f - t_0 = 2$, $f \in L^2((t_0,t_f))$ is given,
and we use $Kv \coloneqq k * v$ for a fixed convolution
kernel $k \in L^{\infty}((0,2))$,
where $*$ denotes the
convolution operator defined by $(k * v)(t) = \int_{t_0}^t k(t - \tau)v(\tau)\,\dd\tau$ for $t \in [t_0,t_f]$.
Regarding the data we choose $f(t) = 0.4\cos(2\pi t)$ for $t \in (-1,1)$
as well as 
\[ k(t) = -0.1\omega_0
\left(\exp\left(-\frac{\omega_0(t - 1)}{\sqrt{2}}\right)\cos\left(\frac{\omega_0(t - 1)}{\sqrt{2}}
- \frac{\pi}{4}\right)
+ \exp\left(-\frac{\omega_0(t - 1)}{\sqrt{2}}\right)\sin\left(\frac{\omega_0(t - 1)}{\sqrt{2}}- \frac{\pi}{4}\right)\right)\]
for $t \in (0,2)$ and $\omega_0 = \pi$. 
Note that the same kernel function is used but reported incorrectly in \cite{kirches2020compactness,manns2020relaxed}.

We perform three experiments. In two of them, we choose $\alpha = 0.0001$ in order to obtain a
meaningful comparison with the combinatorial integral approximation decomposition, which is not possible for high
values of $\alpha$ because the objective becomes \emph{reduce switching by all costs} in this case,
which is fundamentally opposed to the combinatorial integral approximation approach.
In general, $\alpha$ allows one to control the trade-off between control switching and the
minimization of the (tracking) term $F$, analogously to regularization of inverse problems
with fractional-valued control inputs. We highlight that some care is necessary
here because choosing $\alpha$ too large may imply that the resulting control is constant.
We leave a rigorous strategy how to determine $\alpha$ to future work. In order to shed some light
on this situation, we perform a third computational experiment, where we vary the regularization
parameter $\alpha$ for a set of randomly drawn initial values of our implementation of the SLIP method.

We briefly verify that \eqref{eq:ex_p} satisfies our assumptions.

\begin{proposition}
Problem \eqref{eq:ex_p} satisfies
Assumptions \ref{ass:f_setting} and \ref{ass:d1}.
\end{proposition}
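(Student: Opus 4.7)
The plan is to fit the example \eqref{eq:ex_p} into the template of Proposition \ref{prp:d1_satisfied} with $\Omega = (t_0,t_f)$, $z = f$, and $K$ the convolution operator $Kv = k * v$. Once $K$ is shown to be a bounded linear operator from $L^1(\Omega)$ to $L^2(\Omega)$, Assumption \ref{ass:d1} follows directly from Proposition \ref{prp:d1_satisfied}. The verification of Assumption \ref{ass:f_setting} is then immediate: $F$ is nonnegative (so bounded from below by $0$), and continuous on $L^2(\Omega)$ (so in particular lower semi-continuous).

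The main step is thus the boundedness of $K : L^1(\Omega) \to L^2(\Omega)$. For $v \in L^1((t_0,t_f))$ and $t \in (t_0,t_f)$ we have
\[
|(Kv)(t)| = \left|\int_{t_0}^t k(t-\tau)v(\tau)\,\dd\tau\right| \le \|k\|_{L^\infty((0,2))}\|v\|_{L^1((t_0,t_f))}.
\]
Since $t_f - t_0 = 2 < \infty$, this pointwise bound yields $\|Kv\|_{L^2((t_0,t_f))} \le \sqrt{2}\,\|k\|_{L^\infty((0,2))}\|v\|_{L^1((t_0,t_f))}$, so $K$ is bounded with $\|K\|_{1,2} \le \sqrt{2}\,\|k\|_{L^\infty((0,2))}$, which is finite because the explicit kernel $k$ is a bounded combination of exponentially decaying trigonometric functions on $(0,2)$. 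Alternatively one may invoke Young's convolution inequality with the pair $(1,2)$ using $k \in L^2((0,2))$.

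With $K \in \mathcal{L}(L^1(\Omega),L^2(\Omega))$ established, Proposition \ref{prp:d1_satisfied} immediately implies Assumption \ref{ass:d1}. For Assumption \ref{ass:f_setting}, nonnegativity of $F$ gives item \ref{asi:Fbnd}; for item \ref{asi:Flsc}, the operator $K$ restricted to $L^2(\Omega)$ is also bounded into $L^2(\Omega)$ (e.g., again by a pointwise estimate using Cauchy--Schwarz and $k \in L^2((0,2))$, or by the continuous embedding $L^2(\Omega) \hookrightarrow L^1(\Omega)$ on the bounded interval $\Omega$), so $v \mapsto \tfrac{1}{2}\|Kv - f\|_{L^2(\Omega)}^2$ is continuous on $L^2(\Omega)$, which is stronger than lower semi-continuity.

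No real obstacle is expected here: the only nontrivial ingredient is the $L^1 \to L^2$ mapping property of the convolution, which follows at once from $k \in L^\infty((0,2))$ together with the boundedness of the time interval, so the entire proof is a short verification that essentially reduces to citing Proposition \ref{prp:d1_satisfied}.
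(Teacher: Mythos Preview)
Your proposal is correct and follows essentially the same approach as the paper: verify Assumption \ref{ass:f_setting} by nonnegativity and continuity of $F$, and obtain Assumption \ref{ass:d1} by showing $K : L^1(\Omega) \to L^2(\Omega)$ is bounded and then invoking Proposition \ref{prp:d1_satisfied}. The only cosmetic difference is that the paper appeals to Young's convolution inequality together with $k \in W^{1,\infty}_c(\R) \hookrightarrow L^2(\R)$ for the $L^1\!\to\!L^2$ bound, whereas your direct pointwise estimate using $k \in L^\infty((0,2))$ on the bounded interval is an equally valid (and arguably more elementary) route to the same conclusion.
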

\begin{proof}
We define $\Omega \coloneqq (t_0,t_f)$ and
$F(v) \coloneqq \frac{1}{2}\|Kv - f\|_{L^2(\Omega)}^2$
for $v \in L^2(\Omega)$. $F$ is bounded from below by zero.
Because the squared norm $\|\cdot\|_{L^2(\Omega)}^2$ and the operator $K$ are
continuous (see \cite[Thm.\ 3.1.17]{simon2015operator}),
$F$ is lower semi-continuous, and Assumption \ref{ass:f_setting}
holds.

A straightforward calculation shows that $F$ is continuously differentiable.
Because of the Hilbert space setting, the gradient
$\nabla F : L^2(\Omega) \to L^2(\Omega)$ is available and 
is given as
\[ \nabla F(v) = K^*(K v - f), \]
where $K^*$ denotes the adjoint operator of $K$.
Because $K$ is linear and bounded, this also holds for $K^*$;
and consequently $\nabla F$ is a continuously differentiable affine function.

Young's convolution inequality, $k \in W^{1,\infty}_c(\R)$,
and the continuous embedding $W^{1,\infty}(\R) \hookrightarrow L^2(\R)$
give the continuity of the operator $K : L^1(\Omega) \to L^2(\Omega)$.
Moreover, we obtain
\[ \left|\nabla^2 F(\xi)(u,w)\right| = \left|(K^*Ku,w)_{L^2(\Omega)}\right|
   = \left|(Ku,Kw)_{L^2(\Omega)}\right|
   \le \|Ku\|_{L^2(\Omega)}\|Kw\|_{L^2(\Omega)}   
\]
for all $\xi \in L^2(\Omega)$. Combining these observations with
Proposition \ref{prp:d1_satisfied} yields that Assumption \ref{ass:d1}
is satisfied.
\end{proof}

An unregularized variant of \eqref{eq:ex_p} has been solved
after discretization to global optimality in
\cite{buchheim2012effective}. The authors 
show that global optimization
becomes computationally intractable for fine discretizations,
in their case over $\approx 120$ intervals.
An unregularized variant of \eqref{eq:ex_p}
has been approached with
the combinatorial integral approximation decomposition
in \cite{kirches2020compactness}, where high-frequency
switching occurs for fine discretization grids.
In \cite{manns2020relaxed} a different regularization
is chosen for \eqref{eq:ex_p}, namely, a convex
relaxation of the multibang regularizer introduced
in \cite{clason2014multi}. The combination of the relaxed
multibang regularization with approximation algorithms
that take switching costs (see \cite{bestehorn2020mixed})
into account in the combinatorial integral approximation can
yield a reduced number of switches in \cite{manns2020relaxed}.

We have run three experiments. In \emph{Experiment 1} with results
described in section \ref{sec:results_experiment_1}, we compare
four variants of the SLIP method for a sequence of refined discretizations.
Specifically, we compare the SLIP method's efficiency with and without 
initialization with the solution from the previous coarser
discretization grid and for two different update strategies
for the trust-region radius.

In \emph{Experiment 2}, described in section \ref{sec:results_experiment_2},
we compare the SLIP method with a MINLP approach of
solving \eqref{eq:p} for a sequence of refined discretization grids.
We also point out briefly the difference from the combinatorial integral
approximation decomposition approach.
For this experiment, we choose the SLIP variant from \emph{Experiment 1}
that gave the best results in terms of the objective
on the finest discretization grid.

In \emph{Experiment 3}, described in section \ref{sec:results_experiment_3},
we assess the sensitivity of the solution obtained with the SLIP method with respect
to different initialization points and regularization parameters $\alpha$.

We use the SCIP Optimization Suite (version 7.0) \cite{GamrathEtal2020ZR}
with SoPlex as the linear programming solver to solve both the 
\eqref{eq:trip} subproblems of the SLIP method and the MINLPs, which are
mixed-integer quadratic programs (MIQPs).
We note that we have not experienced a qualitative difference
in the results when choosing a different IP solver.
Both algorithms are executed on a compute server with
two Intel(R) Xeon Gold 6130 CPUs (16 cores each) clocked
at 2.1 GHz and with 192 GB RAM.

\subsection{Discretization}\label{sec:slip_discretization}

The SLIP method is a function space algorithm and therefore cannot be
implemented directly on a computer. Specifically, the solution of the
trust-region subproblem \eqref{eq:tr} Line \ref{ln:trstep} cannot be
done exactly.
Following the considerations in sections \ref{sec:trip}
and \ref{sec:tr_trip_approx}, we approximate \eqref{eq:tr}
as follows. For $N \in \{32,64,128,256,512,1024,2048\}$, we
define a finest possible equidistant control discretization
into $N$ intervals. Thus, for a given $N$, a control function
is an interval-wise constant function on $N$ intervals that
equidistantly discretize $(t_0,t_f)$.

Because $\TV(v)$ is integral for integer-valued
control functions $v$, the
term $\alpha\TV$ can be implemented with machine precision accuracy.
Additionally, terms of the forms
\[ F(v) = \frac{1}{2}\|Kv - f\|_{L^2((t_0,t_f))}^2
   = \int_{t_0}^{t_f} |(k * v)(t) - f|^2 \,\dd t, \]
and
\[ (\nabla F(\tilde{v}),v)_{L^2(\Omega)} = (K\tilde{v} - f,Kv)_{L^2(\Omega)}
                        = \int_{\Omega}((k * \tilde{v})(t) - f(t))(k * v)(t)
                        \,\dd t \]
are required to evaluate the objectives of \eqref{eq:p} and \eqref{eq:tr}
for control functions $v$ and $\tilde{v}$ that are interval-wise constant.
The accuracy of the evaluation of the integrals on the right-hand sides
of these equations depends on the accuracy of the evaluation of the convolutions
$k * v$ and $k * \tilde{v}$.
We approximate them with fifth-order Legendre--Gauss quadrature rules
per interval for a decomposition into $2,048$ intervals.

All integrals are using the decomposition into $2,048$ intervals. For smaller
values of $N$, the control functions are broadcast to functions defined
on $2,048$ intervals in our implementation. This procedure yields that the
objectives are consistent over the different discretizations, which
allows a comparison of the achieved objective values.

We assemble all of these approximations in the finite-dimensional
IP \eqref{eq:trip}, which we then solve with a general-purpose IP solver.

Similarly, we derive corresponding mixed-integer quadratic programs
(MIQPs) from \eqref{eq:ex_p} for the same integral discretizations
and the same choices of $N$. In the cases where we can solve
an MIQP to global optimality, the resulting optimal objective value 
is a lower bound on the result of the SLIP method.

Each value of $N$ yields a smallest possible trust-region radius
$\underline{\Delta}^N \coloneqq (t_f - t_0)/N$.
Thus, for given initial trust-region radius $\Delta^0$ and the choice of $N$,
our implementation allows for at most
$k_{\max}^N = \log_2(\underline{\Delta}^N/\Delta^0)$ consecutive
refinements of the trust-region radius.
Our implementation of the SLIP method stops if the sufficient decrease
condition \eqref{eq:1d_accept} cannot be satisfied with the trust-region
radius $\underline{\Delta}^N$ in the inner loop of the SLIP method.

\subsection{Comparison of SLIP Configurations}\label{sec:results_experiment_1}

We compare four variants of the SLIP method.
Specifically, we run the SLIP method with two different trust-region
update strategies and two different ways of initializing the algorithm.

One trust-region update strategy is the trust-region reset strategy (R)
that is defined in Algorithm \ref{alg:trm} and uses the
reset trust-region radius $\Delta^0 = 0.125$.
The other trust-region update strategy (D) works as follows. If a step
is accepted, we double the trust-region radius for the next iteration.

One initialization strategy is the use of the control $v^0 \equiv 0$
for all rounding grids (0). The other initialization strategy
is to use $v^0 \equiv 0$ for only $N = 32$ and use the resulting
control from the optimization with $N / 2$ control intervals
as initialization for the optimization on $N$ control intervals (P).
This strategy corresponds to mesh sequencing.

In total we obtain four algorithm variants, which we denote by
SLIP R 0, SLIP R P, SLIP D 0, and SLIP D P.
Because the mesh-sequencing strategies SLIP R P and SLIP D P use the
results from the previous smaller values of $N$, we accumulate their
compute time over the values $32,\ldots,N$ intervals for
the run with a control discretization into $N$ intervals.

The  objective values achieved are similar for all four algorithm
variants. Regarding the final objective value on the finest
grid, SLIP R 0 performs slightly better than SLIP R P and
SLIP D P, which perform similar to and slightly better than SLIP D 0, respectively.
For the compute times, the differences are much larger. Specifically,
regarding the compute times for the final objective on the
finest grid, SLIP R 0 takes considerably longer than all the other
variants with $1.70\cdot 10^4\,s$. SLIP D P and SLIP R P take
$2.31\cdot 10^3\,s$ and $2.88\cdot 10^3\,s$, respectively,  less
than a fifth of the compute time of SLIP R 0.
The fastest method is SLIP D 0, which  takes only $6.16\cdot 10^2\,s$.
We have tabulated the objective values and compute times
for all variants of SLIP and all $N$ in Table
\ref{tbl:slip_strategies}.
SLIP D 0 is the only strategy in which the optimization on
the finest control grid with $N = 2048$ does not take much
longer than the computation of the grids until $N = 1024$.
The best values achieved in terms of objective and compute
time are highlighted in boldfaced font.

\begin{table}[t]
\caption{Best achieved objectives and corresponding
times of the runs of the different variants
of the SLIP method. Best values (lowest objective and lowest
time) are highlighted using boldfaced font.}\label{tbl:slip_strategies}
\begin{center}
\begin{tabular}{rllllllll}
\hline
& \multicolumn{2}{c}{SLIP R 0}
& \multicolumn{2}{c}{SLIP R P}
& \multicolumn{2}{c}{SLIP D 0}
& \multicolumn{2}{c}{SLIP D P} \\
N
& objective
& time [$s$]
& objective
& time [$s$]
& objective
& time [$s$]
& objective
& time [$s$]\\ \hline
32   & $\pmb{9.08\cdot 10^{-3}}$
     & $\pmb{0.354}$ 
     & $\pmb{9.08\cdot 10^{-3}}$
     & $\pmb{0.354}$ 
     & $9.96\cdot 10^{-3}$
     & $0.680$
     & $9.96\cdot 10^{-3}$
     & $0.680$\\     
64   & $9.17\cdot 10^{-3}$  
     & $\pmb{1.02}$
     & $\pmb{6.17\cdot 10^{-3}}$
     & $1.50$
     & $7.74\cdot 10^{-3}$
     & $1.52$
     & $7.16\cdot 10^{-3}$
     & $1.25$\\
128  & $7.08\cdot 10^{-3}$  
     & $3.19$
     & $\pmb{5.66\cdot 10^{-3}}$
     & $5.66$
     & $6.93\cdot 10^{-3}$ 
     & $3.86$
     & $6.66\cdot 10^{-3}$
     & $\pmb{2.98}$\\
256  & $5.52\cdot 10^{-3}$
     & $\pmb{2.35\cdot 10^{1}}$
     & $\pmb{5.00\cdot 10^{-3}}$ 
     & $3.66\cdot 10^{1}$
     & $5.19\cdot 10^{-3}$
     & $2.61\cdot 10^{1}$
     & $5.22\cdot 10^{-3}$
     & $2.40\cdot 10^{1}$\\
512  & $\pmb{4.43\cdot 10^{-3}}$
     & $1.69\cdot 10^{2}$
     & $4.52\cdot 10^{-3}$
     & $1.70\cdot 10^2$
     & $4.44\cdot 10^{-3}$
     & $1.31\cdot 10^{2}$
     & $5.05\cdot 10^{-3}$
     & $\pmb{8.67\cdot 10^1}$ \\
1024 & $4.53\cdot 10^{-3}$
     & $3.30\cdot 10^{2}$ 
     & $\pmb{4.49\cdot 10^{-3}}$
     & $3.07\cdot 10^{2}$
     & $4.77\cdot 10^{-3}$
     & $\pmb{2.52\cdot 10^{2}}$
     & $4.63\cdot 10^{-3}$
     & $6.63\cdot 10^2$ \\     
2048 & $\pmb{4.34\cdot 10^{-3}}$
     & $1.70\cdot 10^{4}$
     & $4.49\cdot 10^{-3}$ 
     & $2.88\cdot 10^{3}$ 
     & $4.71\cdot 10^{-3}$ 
     & $\pmb{6.16\cdot 10^2}$
     & $4.45\cdot 10^{-3}$
     & $2.31\cdot 10^3$ \\ \hline
\end{tabular}
\end{center}
\end{table}

\subsection{Comparison of SLIP and MINLP Solvers}\label{sec:results_experiment_2}

In this experiment, both the SLIP method and the MIQP solves
are initialized with the initial control $v^0 \equiv 0$
for all control discretizations. We used
the same reset trust-region radius $\Delta^0 = 0.125$
for all control discretizations.
This corresponds to variant SLIP R 0 in the preceding section, which
showed the best performance in terms of the objective value on the finest
grid.

The longest runtime of the SLIP method was 16,985\,s for $N = 2,048$.
The MIQP runs were stopped if they did not complete after
five hours (18,000\,s).
The MIQP run for $N = 32$ was able to find and certify a global minimum
after 5670\,s. The other MIQP runs did not solve \eqref{eq:ex_p} to
global optimality within the time limit, with smallest reported duality
gaps of 100\,\% and more.

For $N = 32$, $64$, and $128$, the MIQP solves with SCIP are able to
produce better objective values than the SLIP method, although
their compute time is much longer.
For example, the best objective achieved for $N = 64$ is $4.733\cdot 10^{-3}$,
which is produced by SCIP for the MIQP formulation after 4,604\,s.
The objective achieved by the SLIP method is $9.169\cdot 10^{-3}$, but the
SLIP method  requires only $4\,s$.
In
Table \ref{tbl:best_achieved} we tabulate the best objectives achieved
by the SLIP method and the MIQP formulation as well as the runtime
of the SLIP method and the time when the solution with the best
objective is produced for the MIQP formulation.
For each $N$ the best achieved objective value is highlighted with
boldfaced font.

For $N = 32$, $64$, and $128$, the SLIP method produces its final
iterate before a better objective is produced by the MIQP solves.
In the other runs, the SLIP method produces better objectives in
much shorter time horizons than do the MIQP solves.
The best objective value over all runs and methods is produced by the
SLIP method for $N = 2,048$ after 16,804\,s.
To understand these results better, we have visualized in Figure 
\ref{fig:obj_over_time} the trajectories of the
achieved objectives over (compute) time for both methods and all grids.

To assess the difference from the combinatorial integral approximation
decomposition approach, we have removed the total variation term from the 
objective and run the combinatorial integral approximation decomposition
on \eqref{eq:ex_p}. This means that we have solved the continuous 
relaxation of the control problem \eqref{eq:phat} and then computed a rounding of
the resulting fractional-valued controls to $\{-2,-1,0,1,2\}$-valued 
controls using the SCARP approach 
\cite{bestehorn2019switching,bestehorn2020mixed}.
We have solved the continuous relaxation and performed the rounding
for the same control discretizations.
When SCARP is used in the rounding step, the total variation of the resulting
control is minimal for a given distance to the fractional-valued
relaxed control (the prescribed distance is linear in the interval 
length).
The objective values for \eqref{eq:ex_p} of the discrete-valued
controls produced by SCARP are given in the sixth column of
Table \ref{tbl:best_achieved}. Most of the time for
the combinatorial integral approximation decomposition approach
is spent for the solution of the continuous relaxation, which
is computed on the finest grid and independently of the
rounding grids that are used for SCARP, which is very fast.
Therefore, the timings in the last column of Table \ref{tbl:best_achieved}
do not differ much.

For the coarsest grid, the combinatorial integral approximation
decomposition approach produces less switching
than does the SLIP method,  while the tracking term of the objective
is worse. As guaranteed by the theory behind the combinatorial
integral decomposition (see, for example,
\cite{kirches2020compactness,manns2018multidimensional}),
the value of the tracking term tends to the value of the
continuous relaxation with finer control discretizations.
This comes at a cost of increased switching behavior; the
total variation term in the objective tends to infinity
for $N \to \infty$. For our set of parameters, we  observe
that the trade-off produced by the combinatorial integral
approximation yields a superior result in terms
of the objective value for $N = 64$ and $N = 128$ compared
with the SLIP method.
For higher values of $N$ the increase of the switching implies that
the total variation term dominates the tracking term,
and an approximately linear increase of the objective value can
be observed in the results.

\begin{table}[t]
\caption{Best achieved objectives and corresponding
times of the runs of SLIP and the MIQP solves.
For SLIP the final time is reported; for MIQP the time of the computation
of the incumbent solution with the best reported objective is reported (both in seconds). The last column reports the objective
achieved with the combinatorial integral decomposition using SCARP.
Best values (lowest objective) are highlighted using boldfaced font.}
\label{tbl:best_achieved}
\begin{center}
\begin{tabular}{rllllll}
\hline
& \multicolumn{2}{l}{SLIP}
& \multicolumn{2}{l}{MIQP}
& SCARP\\
N
& objective
& time
& objective
& time to best objective
& objective
& time \\ \hline
32   & $9.081\cdot 10^{-3}$  & $3.543 \cdot 10^{-1}\,s$ 
     & $\pmb{5.079\cdot 10^{-3}}$  & $3.837 \cdot 10^{3}\,s$
     & $1.839\cdot 10^{-2}$ & $1.775\cdot 10^{1}\,s$ \\     
64   & $9.169\cdot 10^{-3}$  & $1.015 \,s$
     & $\pmb{4.733\cdot 10^{-3}}$ & $4.064\cdot 10^{3}\,s^\ast$
     & $6.369 \cdot 10^{-3}$ & $1.775\cdot 10^{1}\,s$ \\     
128  & $7.080\cdot 10^{-3}$  & $3.185\,s$
     & $\pmb{5.447\cdot 10^{-3}}$  & $1.434\cdot 10^{4}\,s^\ast$
     & $5.551 \cdot 10^{-3}$ & $1.778\cdot 10^{1}\,s$ \\
256  & $5.523\cdot 10^{-3}$  & $2.350\cdot 10^{1}\,s$
     & $\pmb{5.513\cdot 10^{-3}}$  & $1.644\cdot 10^4\,s^\ast$
     & $7.741\cdot 10^{-3}$ & $1.777\cdot 10^{1}\,s$ \\     
512  & $\pmb{4.426\cdot 10^{-3}}$  & $1.687\cdot 10^{2}\,s$
     & $6.685\cdot 10^{-3}$  & $1.776\cdot 10^4\,s^\ast$
	 & $1.220\cdot 10^{-2}$ & $1.778\cdot 10^{1}\,s$ \\
1024 & $\pmb{4.529\cdot 10^{-3}}$  & $3.303\cdot 10^{2}\,s$ 
     & $9.153\cdot 10^{-3}$  & $1.680\cdot 10^{4}\,s^\ast$
	 & $2.350\cdot 10^{-2}$ & $1.784\cdot 10^{1}\,s$\\     
2048 & $\pmb{4.339\cdot 10^{-3}}$ & $1.698\cdot 10^{4}\,s$
     & $2.727\cdot 10^{-2}$  & $1.746\cdot 10^{4}\,s^\ast$
     & $4.610\cdot 10^{-2}$  & $1.800\cdot 10^{1}\,s$\\ \hline
\multicolumn{6}{c}{${}^\ast$ Timeout after $1.8\cdot 10^4$\,s}     
\end{tabular}
\end{center}
\end{table}

We have plotted the resulting control trajectories computed with the SLIP method
for $N = 32$ and $N = 2,048$ in the top row of Figure 
\ref{fig:control_trajectories}.
The corresponding state trajectories and the tracked function $f$
are plotted in the top row of Figure \ref{fig:state_trajectories}.
The corresponding trajectories produced by using the
combinatorial integral approximation decomposition with SCARP
for rounding are shown in the bottom rows.

We also evaluate the numerical results with respect to \ref{eq:sl}-stationarity.
We measure \ref{eq:sl}-stationarity by computing $\|(\nabla F(v)(t_i))_i\|$,
where the $t_i$ are the switching locations of $v$, that is, the $t_i$ where
$\lim_{t \uparrow t_i}v(t) \neq \lim_{t \downarrow t_i}v(t)$.
The point evaluation of $\nabla F(v)$ at the switching locations
of $v$ is  meaningful only if $\nabla F(v)$ is a continuous function.
We briefly argue that this is always the case for our example
in Proposition \ref{prp:derivative_of_example}.
For all discretizations, we obtain a downward trend of the
\ref{eq:sl}-stationarity measure after the first few iterations.
This trend stagnates at about $3 \cdot 10^{-6}$
for the finest discretization $N = 2,048$.
The stationarity measure is plotted over the iterations for
all discretizations in Figure \ref{fig:sl_over_iterations}.
We note that with a fixed available finest discretization 
as we have with $N = 2,048$, one cannot in general  
achieve $\|(\nabla F(v)(t_i))_i\| = 0$ even if our computations
were performed in exact arithmetic. 
However, one could fix the order and heights of the 
switches after the final iteration (thereby fixing also the
total variation) and then optimize the switch locations to
minimize $F(v)$, which is a nonconvex nonlinear program.

\begin{figure}
\begin{center}
  \includegraphics{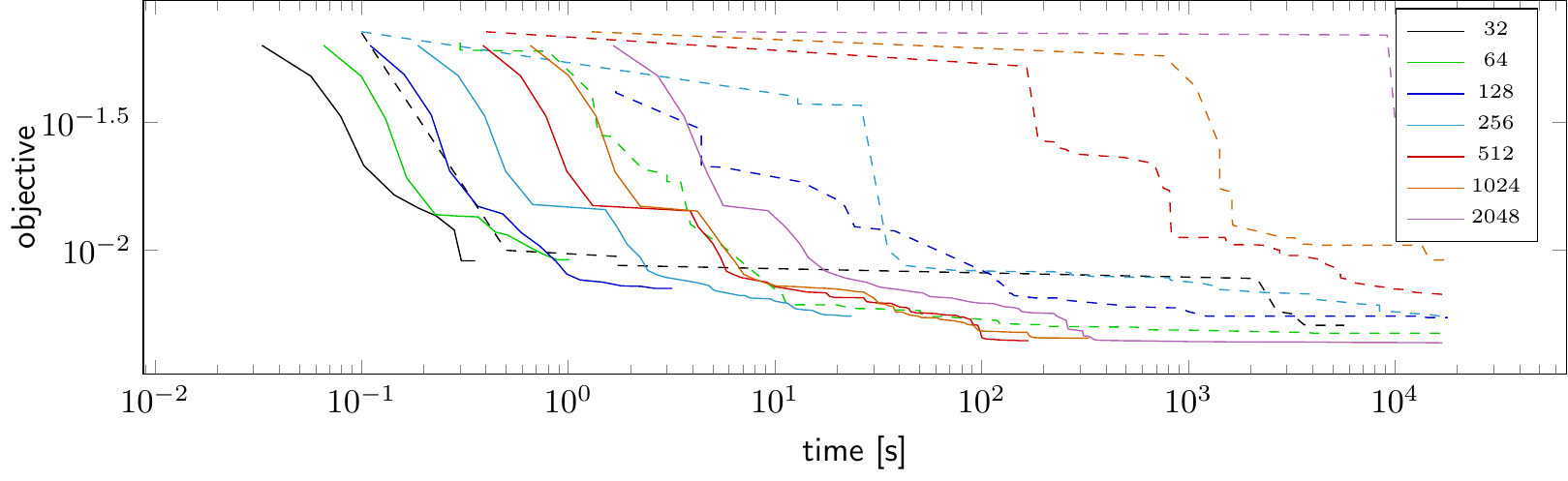}
\end{center}
\caption{Objective values over time for SLIP (solid) and (MIQP) solves  (dashed) for $N = 32$, $\ldots$, $2048$.}\label{fig:obj_over_time}
\end{figure}

\begin{figure}
  \begin{minipage}{.49\linewidth}
    \includegraphics{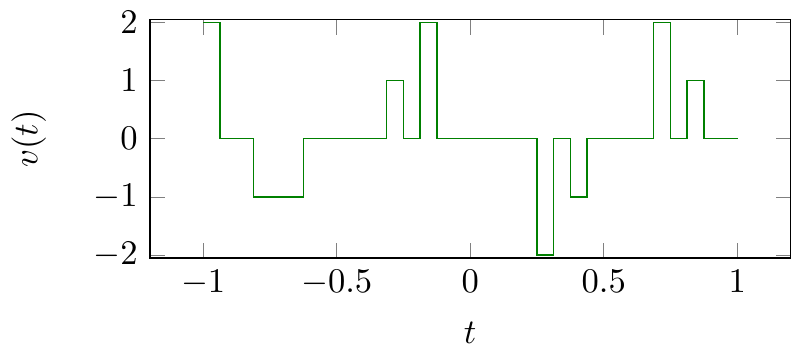}\\
	\includegraphics{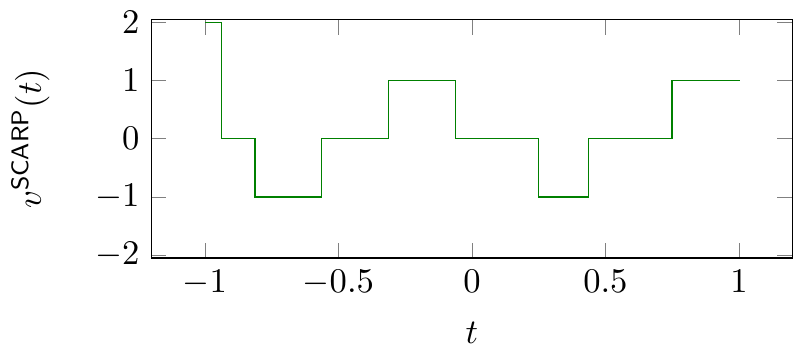}
  \end{minipage}\hfill
  \begin{minipage}{.49\linewidth}
	\includegraphics{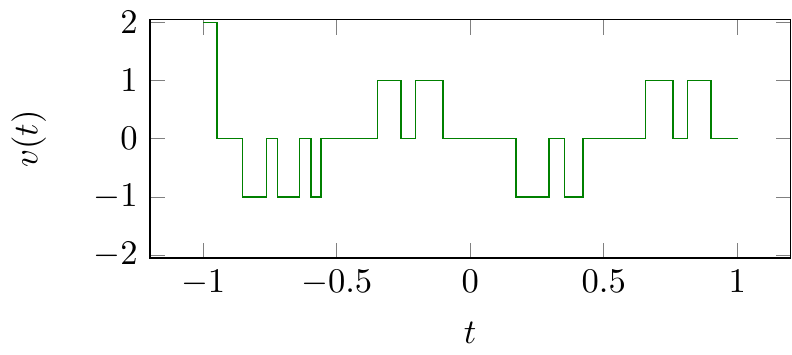}\\
	\includegraphics{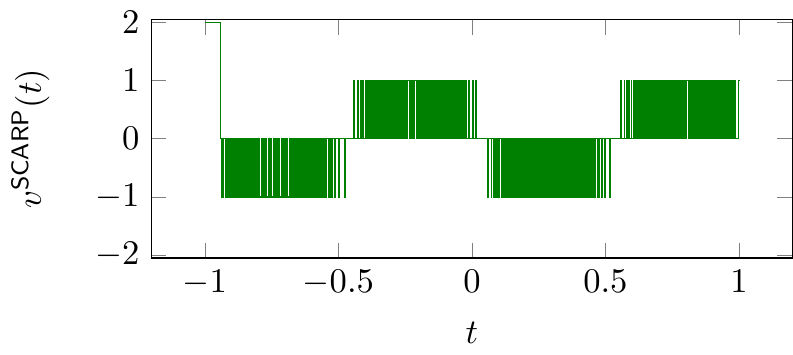}
  \end{minipage}
  \caption{Top row: final control trajectories produced by the SLIP method
  for $N = 32$ (left, objective value $9.081\cdot 10^{-3}$)
  and $N = 2,048$ (right, objective value $4.339\cdot 10^{-3}$).
  Bottom row: control trajectories produced by  the combinatorial
  integral approximation decomposition approach using SCARP for 
  $N = 32$ (left, objective value $1.839\cdot 10^{-2}$) and $N = 2,048$ (right, objective value $4.610\cdot 10^{-2}$).}\label{fig:control_trajectories}
\end{figure}

\begin{figure}
  \begin{minipage}{.49\linewidth}
	\includegraphics{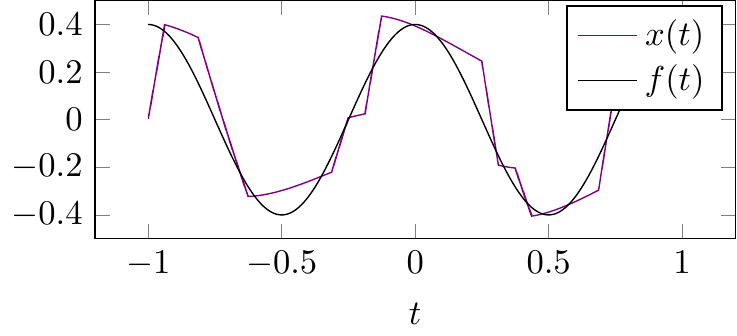}\\
	\includegraphics{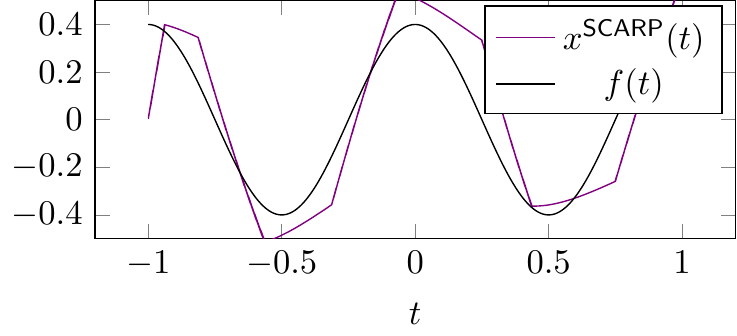}
  \end{minipage}\hfill
  \begin{minipage}{.49\linewidth}
	\includegraphics{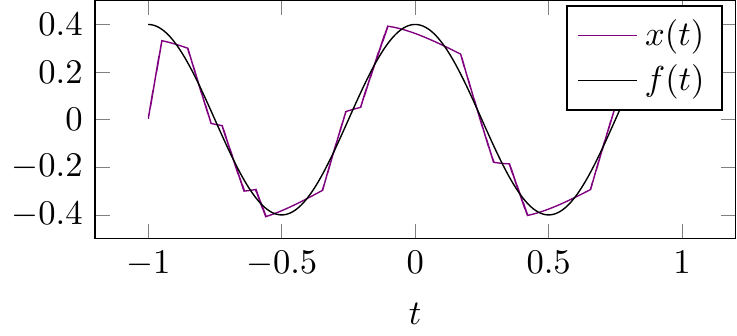}\\
	\includegraphics{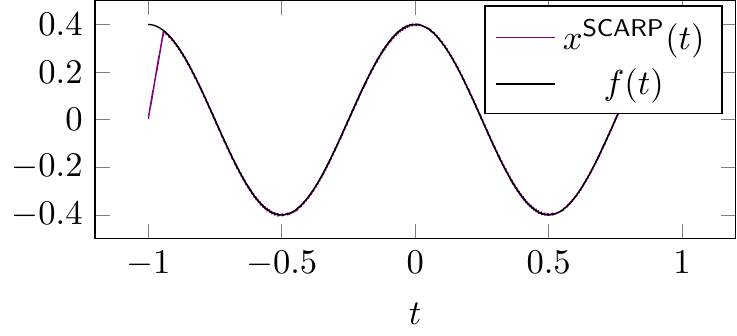}
  \end{minipage}
  \caption{Top row: final state trajectories produced by the SLIP method
  for $N = 32$ (left, objective value $9.081\cdot 10^{-3}$)
  and $N = 2,048$ (right, objective value $4.339\cdot 10^{-3}$)
  in violet and the tracked function $f$ in black.
  Bottom row: final state trajectories produced by the combinatorial
  integral approximation decomposition approach using SCARP for $N = 32$ (left, objective value $1.839\cdot 10^{-2}$) and $N = 2,048$ (right, objective value $4.610\cdot 10^{-2}$). Note that $x(0) = 0$
  and $f(0) = 0.4$ and thus the tracking error is bounded away from zero.}\label{fig:state_trajectories}
\end{figure}

\begin{figure}
\begin{center}
\includegraphics{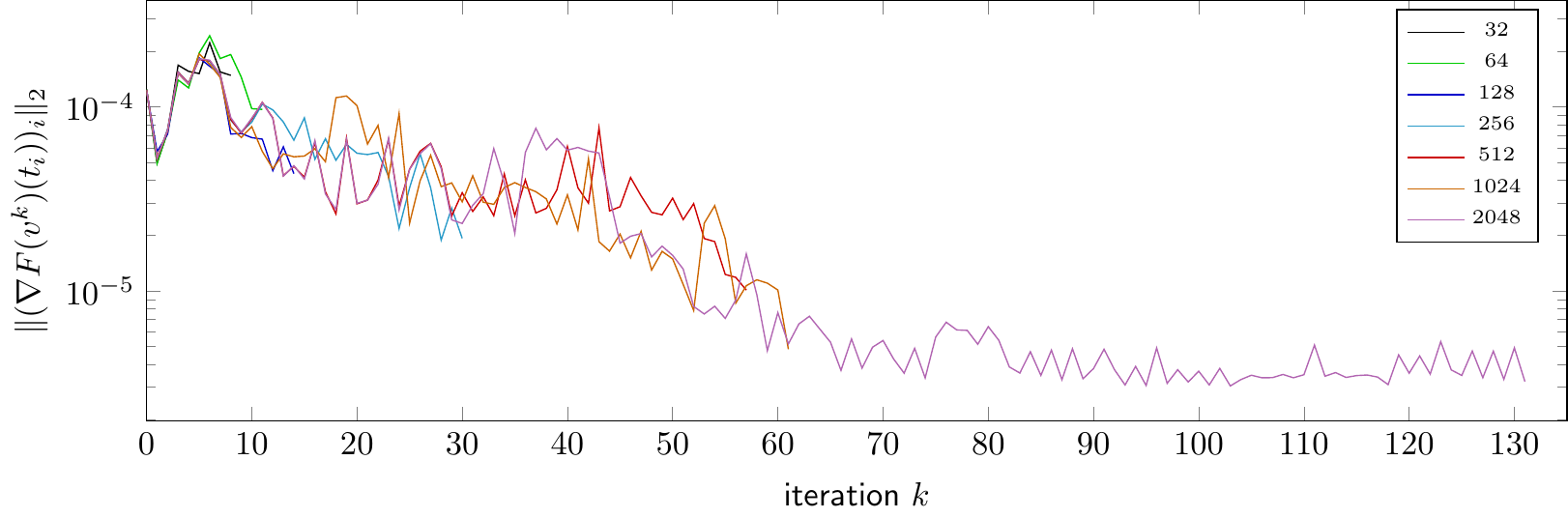}
\end{center}
\caption{\ref{eq:sl}-stationarity of the controls $v^k$
over the iterations $k$ in the SLIP method
for different discretizations.
\ref{eq:sl}-stationarity is measured as the
$\ell_2$-norm of $(\nabla F(v^k)(t_i))_i$ for the switching locations
$t_i$ of $v^k$.}\label{fig:sl_over_iterations}
\end{figure}

\subsection{Sensitivity with Respect to Initialization and Regularization Parameter}\label{sec:results_experiment_3}

We choose $N = 512$ and initialize our implementation of the SLIP method with for 25 randomly generated
controls (drawn from uniform integer distributions of the feasible control values for each  interval)
for the regularization parameter values $\alpha \in \{10^{i}\,|\, i \in \{-5,\ldots,2\} \}$. Again the
SLIP implementation is stopped when the trust-region radius cannot be decreased further.

The mean of the achieved \ref{eq:sl}-stationarity for the final iterate increases for reduced parameters $\alpha$.
Therefore, one may need to choose finer control discretization grids to achieve similar stationarity values for small
regularization parameters. The distributions of the achieved values of stationarity
among the initialization points per regularization parameter value are shown in Figure \ref{fig:sensitivity_sl_obj} (a).

For large values of $\alpha$, the term $\alpha \TV$ dominates the objective and the achieved objective values for
different regularization parameters are largely different while the differences become similar for small regularization
parameters when the first term dominates. In other words, the algorithm is more sensitive to the initialization point
if the regularization parameter is high / switches are penalized hard.

In contrast to \ref{eq:sl}-stationarity, $r$-optimality for some $r > 0$ cannot be assessed directly for the computed
final iterates $v^f$ without solving the infinite-dimensional nonlinear control problem \eqref{eq:p} subject to the
constraint $\|v - v^f\|_{L^1(\Omega)} \le r$. In order to at least estimate the performance in this regard, we
solve the MINLP that has been used in section \ref{sec:results_experiment_2} with the additional constraint
$\|v - v^f\|_{L^1(\Omega)} \le r\frac{t_f - t_0}{N}$, which can be modeled exactly with linear inequalities and
slack variables for our equidistant discretization, for $r \in \{1,2,3,4\}$ and compute the relative differences between
the objective values achieved for $v^f$ and the MINLP solution with the additional constraint.
The relative difference is the difference divided by the objective value of the MINLP solution. The termination criterion
of the MINLP solver is chosen as $10^{-4}$ for the duality gap (difference between upper and lower bound on the MINLP solution
divided by the value of the lower bound). The results show that within this tolerance, the results
produced by our implementation of SLIP are also optimal for the MINLP with the additional constraint
$\|v - v^f\|_{L^1(\Omega)} \le r\frac{t_f - t_0}{N}$ for $\alpha \ge 10^{-2}$. The relative differences increase
with increasing values of $r$, that is a larger feasible set for the MINLP, and decreasing values of $\alpha$.
This is in line with the higher \ref{eq:sl}-stationarity values for decreasing values of $\alpha$.
We provide the achieved relative differences in Figure \ref{fig:sensitivity_gap}. Relative differences
below the duality gap threshold of the MINLP solver are set to the duality gap threshold in the interest
of a fair comparison and a clearer presentation.

\begin{figure}
\begin{center}
  \begin{minipage}{.29\linewidth}
  \includegraphics[height=5cm]{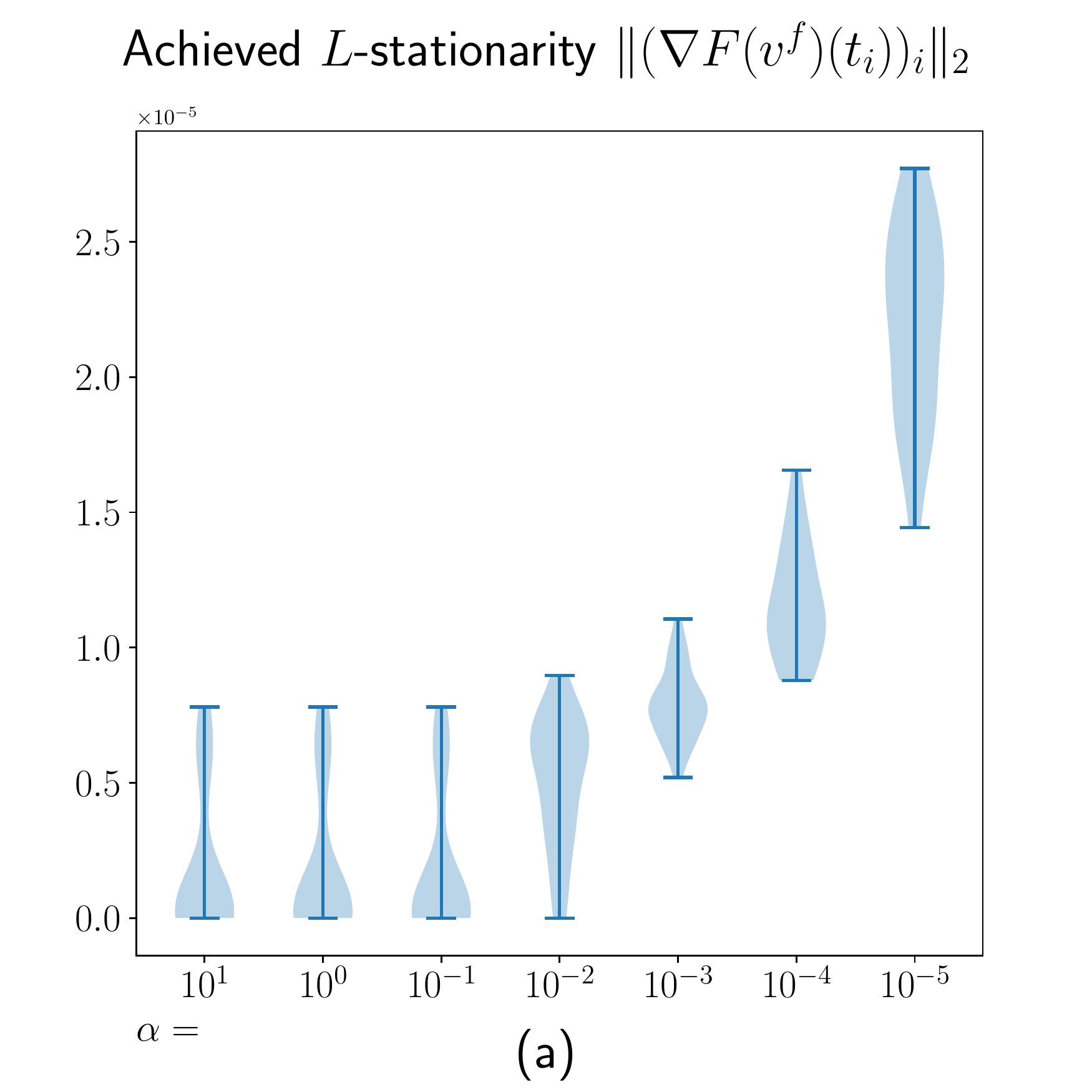}
  \end{minipage}
  \begin{minipage}{.7\linewidth}
    \includegraphics[height=5cm]{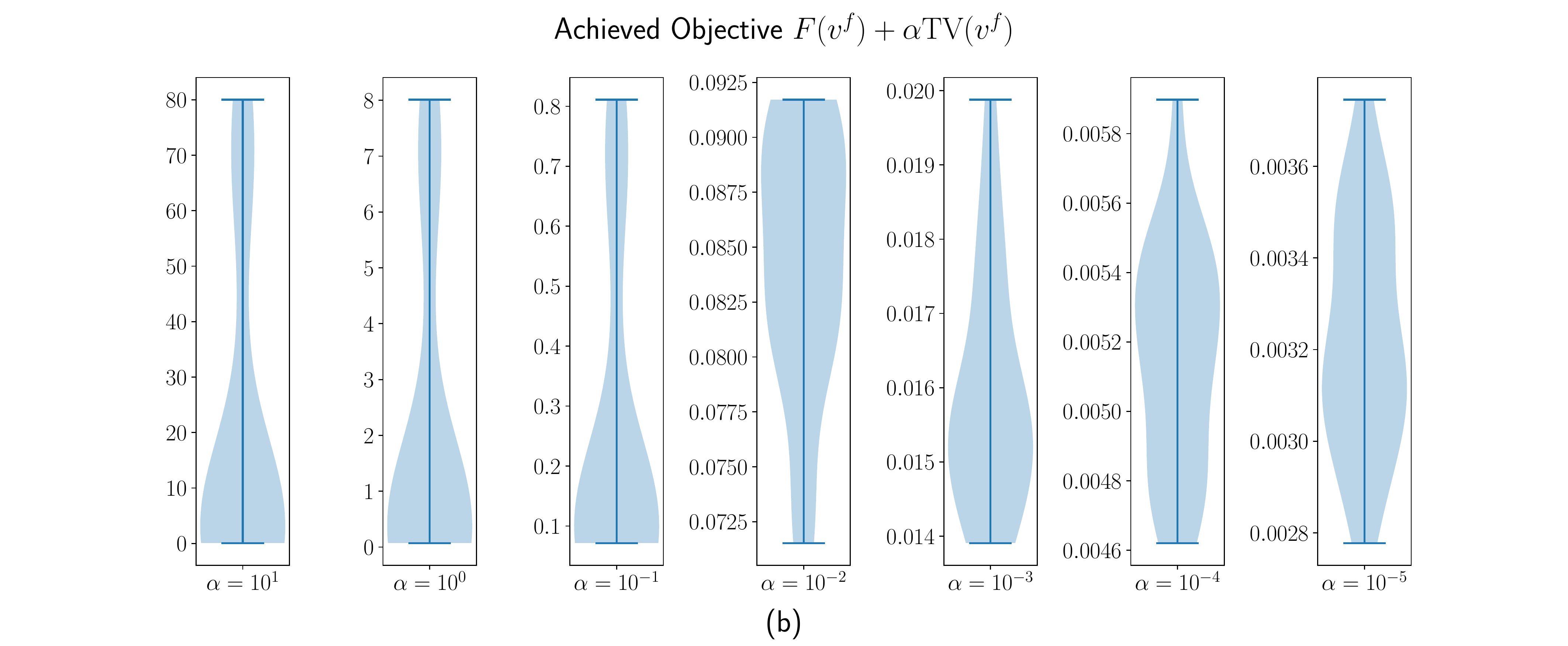}
  \end{minipage}
\end{center}
\caption{Distributions of the achieved \ref{eq:sl}-stationarity (left) and objective values (right)
of the final iterate $v^f$ for different values of $\alpha$ over randomly drawn initialization
points of our SLIP implementation. \ref{eq:sl}-stationarity is measured as the
$\ell_2$-norm of $(\nabla F(v^f)(t_i))_i$ for the switching locations
$t_i$ of $v^f$.}\label{fig:sensitivity_sl_obj}
\end{figure}

\begin{figure}
\begin{center}
  \includegraphics[height=4cm]{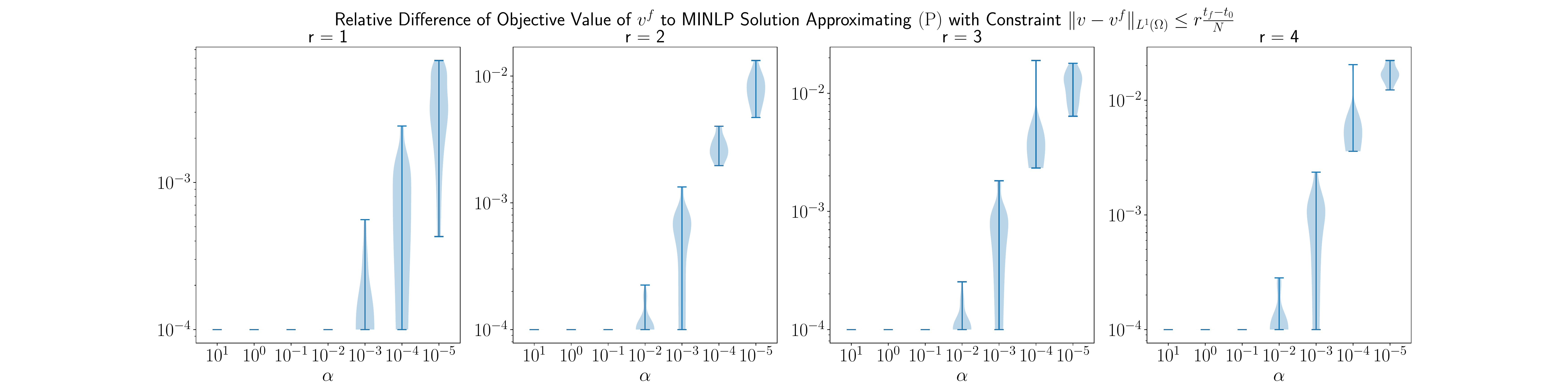}
\end{center}
\caption{Distributions of the achieved relative differences between the objective values achieved for $v^f$
and the MINLP solution with the additional constraint $\|v - v^f\|_{L^1(\Omega)} \le r\frac{t_f - t_0}{N}$
for $r \in \{1,2,3,4\}$. Values lower than $10^{-4}$ are set to $10^{-4}$ because this is the configured
duality gap threshold for the MINLP solver.}
\label{fig:sensitivity_gap}
\end{figure}

\section{Conclusion}\label{sec:conclusion}

Total variation regularization of integer optimal control problems
yields existence of minimizers under mild assumptions that
are independent of the regularity of the underlying dynamical system.
The set of feasible controls of the regularized problems is
closed with respect to weak-$^*$ and strict convergence
in $BV(\Omega)$. The infinite-dimensional vantage point allows for
a meaningful concept of local optimizers ($r$-optimal points)
for such problems.

We provide an algorithmic framework that combines a trust-region
globalization from nonlinear programming with IPs as subproblems to optimize for $r$-optimal
points. We obtain convergence of the resulting iterates
to points that satisfy a first-order necessary condition
of $r$-optimal points for problems with a one-dimensional domain
of the control input.
However, in order to obtain that \ref{eq:sl}-stationarity is a first-order
necessary condition for both the trust-region subproblem and
$r$-optimal points, a regularity assumption on the Hessian
of the objective $F$ and thus the underlying dynamical system
is necessary.

While the algorithmic framework is still computationally expensive,
we have experienced superior performance in terms of both
computational effort and achieved objective value when compared
with the application of an off-the-shelf spatial branch-and-bound
strategy to a discretized integer optimal control problem.
The results show that one can  easily achieve a
meaningful balance between the first term in the objective
(tracking in our example) and the total variation term. 
However, the algorithmic framework is computationally
less attractive than the combinatorial integral decomposition.
If fine control discretizations but low switching costs
are required or the first step of the combinatorial integral
approximation produces a control that is not close (in norm)
to an integer-valued one, the high-frequency switching
resulting from the combinatorial integral approximation
can be avoided by using the presented algorithmic framework.
The computational results, in particular Figure \ref{fig:control_trajectories},
show that although starting from the initial guess that is identical zero, the algorithm
is able to develop a nontrivial switching structure of the resulting control.
The computational results hint that, in particular for small
values of the regularization parameter $\alpha$, one may needs to choose fine discretizations,
thereby hinting that adaptive grid refinement strategies may be necessary for a sophisticated
implementation.

In order to save compute time in practice, the numerical results
from Experiment 2 suggest that
one should not  run the trust-region reset strategy
for fine grids directly on the finest required
grid but run it on refined meshes
and initialize it with the solution from the previous run.
Alternatively or additionally, one may  replace the trust-region 
reset strategy with a simple update strategy, for which
we, however, fail to guarantee the convergence properties
obtained in section \ref{sec:trm_analysis_1d}.
Contrary to our intuition the results indicate that a combination
of the trust-region update strategy with initialization from
the previous grid is not the fastest strategy in general.
Moreover we intend to use L-stationarity directly
in the algorithm in future implementations. This means that we
need to ensure more regularity of the problem \eqref{eq:sl}, in particular
differentiability of the reduced objective with respect to the
switching locations, and then solve it after the switching structure
has settled as is proposed in Remark \ref{rem:active_set_idea}.

Due to its similarity to PDE-constrained optimal
control problems with bound constraints, the algorithm might be
improved by integrating curvature information into the
trust-region subproblems, which may lead to more difficult
subproblems, however.

\section*{Acknowledgments}
The authors thank two anonymous referees for their helpful comments. The authors are also grateful to Gerd
Wachsmuth (BTU Cottbus-Senftenberg) for giving helpful
feedback on the manuscript.
This work was supported by the U.S.\ Department of Energy, Office of Science, Office of Advanced Scientific Computing Research, Scientific Discovery through the
Advanced Computing (SciDAC) Program through the FASTMath Institute under Contract No. DE-AC02-06CH11357.

\appendix

\section{Auxiliary Results}

We state the version of Taylor's theorem that we require for our analysis.
\begin{proposition}\label{prp:lin_approx}
Let $F : L^2(\Omega) \to \R$ be twice continuously
Fr\'{e}chet differentiable.
Then it holds for all $u$, $v \in L^2(\Omega)$ that
$F(v) = F(u) + (\nabla F(u), v - u)_{L^2(\Omega)}
+ \frac{1}{2}(v - u,\nabla^2F(\xi) (v - u))_{L^2(\Omega)}$
for some $\xi$ in the line segment between $v$ and $u$.
\end{proposition}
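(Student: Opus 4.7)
The plan is to reduce this infinite-dimensional Taylor expansion to the classical one-dimensional Taylor theorem with Lagrange form of the remainder by considering the restriction of $F$ to the line segment between $u$ and $v$. Concretely, I would fix $u$, $v \in L^2(\Omega)$ and introduce the auxiliary scalar function
\[ \phi : [0,1] \to \R, \quad \phi(t) \coloneqq F(u + t(v-u)). \]
The aim is to show that $\phi \in C^2([0,1])$, apply the usual Taylor theorem in one real variable, and then rewrite the identity back in terms of $F$, $\nabla F$, and $\nabla^2 F$.

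The main technical step is the chain rule. Since $F$ is twice continuously Fr\'echet differentiable and the map $t \mapsto u + t(v-u)$ is affine (hence smooth) from $\R$ into $L^2(\Omega)$, the chain rule yields
\[ \phi'(t) = (\nabla F(u + t(v-u)), v - u)_{L^2(\Omega)}, \]
and differentiating once more gives
\[ \phi''(t) = \nabla^2 F(u + t(v-u))(v-u, v-u). \]
Continuity of $\nabla^2 F$ together with continuity of $t \mapsto u + t(v-u)$ shows that $\phi''$ is continuous on $[0,1]$, so $\phi \in C^2([0,1])$.

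Now I invoke the classical one-dimensional Taylor expansion of $\phi$ about $t = 0$ evaluated at $t = 1$ with Lagrange remainder: there exists $\theta \in (0,1)$ with
\[ \phi(1) = \phi(0) + \phi'(0) + \tfrac{1}{2}\phi''(\theta). \]
Substituting the expressions for $\phi(0)$, $\phi(1)$, $\phi'(0)$, and $\phi''(\theta)$ and setting $\xi \coloneqq u + \theta(v-u)$ produces exactly the claimed identity. I do not anticipate a genuine obstacle: the only subtlety is justifying the chain rule for the second derivative in the Fr\'echet sense, which follows directly from the assumption that $F$ is twice continuously Fr\'echet differentiable and the affine structure of the path.
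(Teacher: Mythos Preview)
Your proof is correct and follows essentially the same idea as the paper's own argument: the paper simply cites Taylor's theorem in Banach spaces (Zeidler) for the integral remainder and then invokes the intermediate value theorem---which is exactly what your reduction to the scalar function $\phi$ and the one-dimensional Lagrange remainder accomplish, only spelled out explicitly rather than by reference. Your version is more self-contained; otherwise there is no substantive difference.
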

\begin{proof}
This follows from combining Taylor's theorem in Banach spaces (see section 4.5 in
\cite{zeidler2012applied}) with the intermediate value theorem, which
is applicable because $F$ is $\R$-valued.
\end{proof}

We state and prove the relationship between $\|\cdot\|_{L^2(\Omega)}$ and
$\|\cdot\|_{L^1(\Omega)}$ for $\{\nu_1,\ldots,\nu_M\}$-valued controls.
\begin{proposition}\label{prp:discretev_L1_L2_inequality}
Let $v \in L^\infty(\Omega)$ be $\{\nu_1,\ldots,\nu_M\}$-valued.
Then, $\|v\|_{L^2(\Omega)} \le M_1 \sqrt{\|v\|_{L^1(\Omega)}}$, where
$M_1 \coloneqq \max_{i} |\nu_i|$. Let
$v_1$, $v_2 \in L^\infty(\Omega)$ be $\{\nu_1,\ldots,\nu_M\}$-valued.
Then, $\|v_1 - v_2\|_{L^2(\Omega)} \le M_2 \sqrt{\|v_1 - v_2\|_{L^1(\Omega)}}$, where
$M_2 \coloneqq \max_{i,j} |\nu_i - \nu_i|$.
\end{proposition}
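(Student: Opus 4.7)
The key structural fact I will exploit is that the $\nu_i$ are integers (by the standing assumption $\{\nu_1,\ldots,\nu_M\}\subset\Z$), so a discrete-valued function can only take values of modulus $\ge 1$ wherever it is nonzero. This lets me convert the squared $L^2$-norm into a support measure and then bound the support measure by the $L^1$-norm.

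For the first inequality, I would first record the pointwise estimate $|v(x)|\le M_1$, which is immediate from $v(x)\in\{\nu_1,\ldots,\nu_M\}$. Next, since $v(x)\in\Z$, whenever $v(x)\neq 0$ we have $|v(x)|\ge 1$, so
\[ \lambda(\{x\in\Omega : v(x)\neq 0\}) \le \int_{\{v\neq 0\}} |v(x)|\,\dd x = \|v\|_{L^1(\Omega)}. \]
Combining these two observations gives
\[ \|v\|_{L^2(\Omega)}^2 = \int_{\{v\neq 0\}} |v(x)|^2\,\dd x \le M_1^2\,\lambda(\{v\neq 0\}) \le M_1^2\,\|v\|_{L^1(\Omega)}, \]
and taking square roots yields the claim.

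For the second inequality, I would observe that the pointwise difference $v_1-v_2$ takes values in the finite set $\{\nu_i-\nu_j : i,j\in\{1,\ldots,M\}\}\subset\Z$. Hence the exact same argument applies with $M_1$ replaced by $M_2=\max_{i,j}|\nu_i-\nu_j|$: the pointwise bound $|v_1(x)-v_2(x)|\le M_2$ and the integrality $|v_1(x)-v_2(x)|\ge 1$ on $\{v_1\neq v_2\}$ together give $\|v_1-v_2\|_{L^2(\Omega)}^2 \le M_2^2\|v_1-v_2\|_{L^1(\Omega)}$.

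There is no real obstacle here; the only subtlety is that the inequality fails badly for general $L^\infty$ functions and hinges entirely on the integrality $\nu_i\in\Z$, which provides the crucial lower bound on $|v|$ away from its zero set. Without integrality one only obtains $\|v\|_{L^2}\le\sqrt{M_1}\sqrt{\|v\|_{L^1}}$ via the trivial bound $|v|^2\le M_1|v|$, which is the weaker estimate alluded to when $\lambda(\Omega)$ is large.
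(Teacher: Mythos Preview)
Your proof is correct, but it takes a more circuitous route than the paper's. The paper simply uses the pointwise bound $|v(s)|^2 \le M_1\,|v(s)|$ (valid because $|v(s)|\le M_1$), integrates, and obtains $\|v\|_{L^2}^2 \le M_1\|v\|_{L^1}$, i.e.\ $\|v\|_{L^2}\le \sqrt{M_1}\sqrt{\|v\|_{L^1}}$. Since $\nu_i\in\Z$ forces $M_1\in\N$ and hence $\sqrt{M_1}\le M_1$, this already implies the stated inequality. No integrality, no support-measure argument, no lower bound on $|v|$ is needed.

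Your last paragraph gets the comparison backwards: the ``trivial'' bound $|v|^2\le M_1|v|$ yields the constant $\sqrt{M_1}$, which is \emph{sharper} than your $M_1$ whenever $M_1\ge 1$, not weaker. So integrality is not what makes the proposition work; it is only what makes your particular argument (via $|v|\ge 1$ on the support) go through. The paper's approach is both shorter and yields the better constant, and it applies verbatim to arbitrary finite value sets $\{\nu_1,\ldots,\nu_M\}\subset\R$.
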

\begin{proof}
The first claim follows from the inequality
$\left(\int_\Omega |v(s)|^2\,\dd s\right)^{\frac{1}{2}}
\le \left(\int_\Omega \max_i|\nu_i||v(s)|\,\dd s\right)^{\frac{1}{2}}$.
The second claim follows analogously.
\end{proof}

We state and prove the regularity of $\nabla F(v)$ for
our computational example, in which we use a convolution
kernel of $W^{1,\infty}(\R)$-regularity.

\begin{proposition}\label{prp:derivative_of_example}
In the setting of section \ref{sec:experiments}
it holds for all $v \in L^2((t_0,t_f))$ that
\[ \nabla F(v) = K^* K v - K^* f \in C([t_0,t_f]). \]
\end{proposition}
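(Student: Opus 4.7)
The plan is to reduce the statement to a continuity property of the Volterra operator $K^*$ acting on $L^2((t_0,t_f))$. Since $Kv - f \in L^2((t_0,t_f))$ for any $v \in L^2((t_0,t_f))$, it suffices to prove that $K^*$ maps $L^2((t_0,t_f))$ continuously into $C([t_0,t_f])$. The key input is that $k \in W^{1,\infty}(\mathbb{R})$, hence $k$ has a Lipschitz representative with Lipschitz constant bounded by $\|k'\|_{L^\infty}$.

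First I would identify the adjoint explicitly. A straightforward Fubini-type computation starting from $(Kv, w)_{L^2} = \int_{t_0}^{t_f} w(t) \int_{t_0}^t k(t-\tau)v(\tau)\,\dd\tau\,\dd t$ and swapping the order of integration yields $(K^* w)(\tau) = \int_{\tau}^{t_f} k(t-\tau) w(t)\,\dd t$. So $K^*$ is a Volterra-type operator with the same kernel $k$, and I would now prove directly that $K^*w$ admits a continuous representative on $[t_0,t_f]$ for every $w \in L^2((t_0,t_f))$.

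For $\tau_1 < \tau_2$ in $[t_0,t_f]$, I would split
\begin{equation*}
(K^*w)(\tau_2) - (K^*w)(\tau_1) = -\int_{\tau_1}^{\tau_2} k(t-\tau_1) w(t)\,\dd t + \int_{\tau_2}^{t_f}\bigl(k(t-\tau_2) - k(t-\tau_1)\bigr) w(t)\,\dd t,
\end{equation*}
and bound the two terms separately. By Cauchy--Schwarz, the first term is at most $\|k\|_{L^\infty}\sqrt{\tau_2 - \tau_1}\,\|w\|_{L^2((t_0,t_f))}$. For the second term, the Lipschitz property of $k$ (coming from $k\in W^{1,\infty}(\mathbb{R})$) gives $|k(t-\tau_2) - k(t-\tau_1)| \le \|k'\|_{L^\infty}|\tau_2 - \tau_1|$, so another application of Cauchy--Schwarz bounds the second term by $\|k'\|_{L^\infty}|\tau_2 - \tau_1|\sqrt{t_f - t_0}\,\|w\|_{L^2((t_0,t_f))}$. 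Both bounds vanish as $|\tau_2 - \tau_1| \to 0$, proving uniform continuity of $K^*w$ on $[t_0,t_f]$. Applying this with $w = Kv - f$ yields the claim.

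I do not expect any serious obstacle: the only mild subtlety is making sure the integrand $k(t-\tau)$ for $t \in [\tau,t_f]$ indeed falls into the domain where the $W^{1,\infty}(\mathbb{R})$-regularity of $k$ applies, which is immediate since the argument $t-\tau$ lies in $[0, t_f - t_0]$ and $k$ is globally Lipschitz. If a more structured proof is preferred, one could alternatively observe that $K^*w$ is absolutely continuous via differentiation under the integral (using $k' \in L^\infty$), which yields continuity for free, but the direct splitting above keeps the argument elementary and self-contained.
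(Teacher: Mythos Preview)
Your proof is correct and shares the same underlying idea as the paper's: both establish that $K^*$ maps $L^2((t_0,t_f))$ into $C([t_0,t_f])$ by exploiting the $W^{1,\infty}$-regularity of the kernel $k$. The difference is one of presentation. The paper argues at a higher level, first noting that $Kv$ is continuous and then observing that $K^*$ is the cross-correlation (convolution with $s \mapsto k(-s)$), so the same smoothing property applies to $K^*Kv$ and $K^*f$. You instead compute $K^*$ explicitly and prove the continuity by hand via the splitting and Lipschitz estimate. Your route is more self-contained and avoids the intermediate (and, strictly speaking, unnecessary) step of showing $Kv \in C$; the paper's route is shorter but leans on standard convolution facts that a reader must supply.
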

\begin{proof}
The formula $\nabla F(v) = K^* K v - K^* f$ follows from basic
derivative calculus.
Because $k \in W^{1,\infty}_c
(\R)$, it holds that $Kv = (k * v)\chi_{(t_0,t_f)}$ is a continuous function.
Moreover, the adjoint of the convolution is the cross-correlation,
which is a convolution with the kernel $s \mapsto k(-x)$ instead of $k$.
This implies that $K^* K v$, $K^* f$, and in turn
$\nabla F(v)$ are continuous functions.
\end{proof}

\bibliography{biblio}

\begin{thebibliography}{10}

\bibitem{ambrosio2000functions}
L.~Ambrosio, N.~Fusco, and D.~Pallara.
\newblock {\em Functions of bounded variation and free discontinuity problems},
  volume 254.
\newblock Clarendon Press Oxford, 2000.

\bibitem{bestehorn2019switching}
F.~Bestehorn, C.~Hansknecht, C.~Kirches, and P.~Manns.
\newblock A switching cost aware rounding method for relaxations of
  mixed-integer optimal control problems.
\newblock In {\em 2019 IEEE 58th Conference on Decision and Control (CDC)},
  pages 7134--7139. IEEE, 2019.

\bibitem{bestehorn2020mixed}
F.~Bestehorn, C.~Hansknecht, C.~Kirches, and P.~Manns.
\newblock Mixed-integer optimal control problems with switching costs: {A}
  shortest path approach.
\newblock {\em Mathematical Programming Series B}, 2020.

\bibitem{bredies2010total}
K.~Bredies, K.~Kunisch, and T.~Pock.
\newblock Total generalized variation.
\newblock {\em SIAM Journal on Imaging Sciences}, 3(3):492--526, 2010.

\bibitem{buchheim2012effective}
C.~Buchheim, A.~Caprara, and A.~Lodi.
\newblock An effective branch-and-bound algorithm for convex quadratic integer
  programming.
\newblock {\em Mathematical Programming}, 135(1-2):369--395, 2012.

\bibitem{burger2012exact}
M.~Burger, Y.~Dong, and M.~Hinterm{\"u}ller.
\newblock Exact relaxation for classes of minimization problems with binary
  constraints.
\newblock {\em arXiv preprint arXiv:1210.7507}, 2012.

\bibitem{casas2012second}
E.~Casas.
\newblock Second order analysis for bang-bang control problems of {PDEs}.
\newblock {\em SIAM Journal on Control and Optimization}, 50(4):2355--2372,
  2012.

\bibitem{chambolle1997image}
A.~Chambolle and P.-L. Lions.
\newblock Image recovery via total variation minimization and related problems.
\newblock {\em Numerische Mathematik}, 76(2):167--188, 1997.

\bibitem{chan1998total}
T.~F. Chan and C.-K. Wong.
\newblock Total variation blind deconvolution.
\newblock {\em IEEE transactions on Image Processing}, 7(3):370--375, 1998.

\bibitem{clason2018total}
C.~Clason, F.~Kruse, and K.~Kunisch.
\newblock Total variation regularization of multi-material topology
  optimization.
\newblock {\em ESAIM: Mathematical Modelling and Numerical Analysis},
  52(1):275--303, 2018.

\bibitem{clason2014multi}
C.~Clason and K.~Kunisch.
\newblock Multi-bang control of elliptic systems.
\newblock In {\em Annales de l'Institut Henri Poincar\'{e} (c) Analys\'{e} Non
  Lin\'{e}aire}, volume~31, pages 1109--1130. Elsevier, 2014.

\bibitem{clason2018vector}
C.~Clason, C.~Tameling, and B.~Wirth.
\newblock Vector-valued multibang control of differential equations.
\newblock {\em SIAM Journal on Control and Optimization}, 56(3):2295--2326,
  2018.

\bibitem{de2019mixed}
A.~De~Marchi.
\newblock On the mixed-integer linear-quadratic optimal control with switching
  cost.
\newblock {\em IEEE Control Systems Letters}, 3(4):990--995, 2019.

\bibitem{de2020sparse}
A.~De~Marchi and M.~Gerdts.
\newblock Sparse switching times optimization and a sweeping {H}essian proximal
  method.
\newblock In {\em Operations Research Proceedings 2019}, pages 89--95.
  Springer, 2020.

\bibitem{dobson1996analysis}
D.~Dobson and O.~Scherzer.
\newblock Analysis of regularized total variation penalty methods for
  denoising.
\newblock {\em Inverse Problems}, 12(5):601, 1996.

\bibitem{engel2021optimal}
S.~Engel, B.~Vexler, and P.~Trautmann.
\newblock Optimal finite element error estimates for an optimal control problem
  governed by the wave equation with controls of bounded variation.
\newblock {\em IMA Journal of Numerical Analysis}, 41(4):2639--2667, 2021.

\bibitem{exler2007trust}
O.~Exler and K.~Schittkowski.
\newblock A trust region {SQP} algorithm for mixed-integer nonlinear
  programming.
\newblock {\em Optimization Letters}, 1(3):269--280, 2007.

\bibitem{flasskamp2013discretized}
K.~Fla{\ss}kamp, T.~Murphey, and S.~Ober-Bl{\"o}baum.
\newblock Discretized switching time optimization problems.
\newblock In {\em 2013 European Control Conference (ECC)}, pages 3179--3184.
  IEEE, 2013.

\bibitem{fornasier2009subspace}
M.~Fornasier and C.-B. Sch{\"o}nlieb.
\newblock Subspace correction methods for total variation and
  $\backslash$ell\_1-minimization.
\newblock {\em SIAM Journal on Numerical Analysis}, 47(5):3397--3428, 2009.

\bibitem{GamrathEtal2020ZR}
G.~Gamrath, D.~Anderson, K.~Bestuzheva, W.-K. Chen, L.~Eifler, M.~Gasse,
  P.~Gemander, A.~Gleixner, L.~Gottwald, K.~Halbig, G.~Hendel, C.~Hojny,
  T.~Koch, P.~Le Bodic, S.~J. Maher, F.~Matter, M.~Miltenberger, E.~M{\"u}hmer,
  E.~M{\"u}ller, M.~E. Pfetsch, F.~Schl{\"o}sser, F.~Serrano, Y.~Shinano,
  C.~Tawfik, S.~Vigerske, F.~Wegscheider, D.~Weninger, and J.~Witzig.
\newblock {The SCIP Optimization Suite 7.0}.
\newblock ZIB-Report 20-10, Zuse Institute Berlin, March 2020.

\bibitem{gerdts2005solving}
M.~Gerdts.
\newblock {S}olving mixed-integer optimal control problems by
  {B}ranch\&{B}ound: {A} case study from automobile test-driving with gear
  shift.
\newblock {\em {O}ptimal {C}ontrol {A}pplications and {M}ethods}, 26:1--18,
  2005.

\bibitem{giorgi1992dini}
G.~Giorgi and S.~Koml{\'o}si.
\newblock Dini derivatives in optimization -- {Part I}.
\newblock {\em Rivista di Matematica per le Scienze Economiche e Sociali},
  15(1):3--30, 1992.

\bibitem{gottlich2017partial}
Simone G\"{o}ttlich, Andreas Potschka, and Ute Ziegler.
\newblock Partial outer convexification for traffic light optimization in road
  networks.
\newblock {\em SIAM Journal on Scientific Computing}, 39(1):B53--B75, 2017.

\bibitem{hahn2020binary}
M.~Hahn, S.~Sager, and S.~Leyffer.
\newblock Binary optimal control by trust-region steepest descent.
\newblock {\em Mathematical Programming}, 2022.
\newblock (to appear).

\bibitem{hante2017challenges}
F.~M. Hante, G.~Leugering, A.~Martin, L.~Schewe, and M.~Schmidt.
\newblock Challenges in optimal control problems for gas and fluid flow in
  networks of pipes and canals: From modeling to industrial applications.
\newblock In {\em Industrial mathematics and complex systems}, pages 77--122.
  Springer, 2017.

\bibitem{hante2013relaxation}
F.~M. Hante and S.~Sager.
\newblock Relaxation methods for mixed-integer optimal control of partial
  differential equations.
\newblock {\em Computational Optimization and Applications}, 55(1):197--225,
  2013.

\bibitem{hintermuller2017optimal}
Michael Hinterm{\"u}ller and Carlos~N Rautenberg.
\newblock Optimal selection of the regularization function in a weighted total
  variation model. part i: Modelling and theory.
\newblock {\em Journal of Mathematical Imaging and Vision}, 59(3):498--514,
  2017.

\bibitem{kaya2020optimal}
C.~Y. Kaya.
\newblock Optimal control of the double integrator with minimum total
  variation.
\newblock {\em Journal of Optimization Theory and Applications}, 185:966--981,
  2020.

\bibitem{kirches2013mixed}
C.~Kirches, H.~G. Bock, J.~P. Schl{\"o}der, and S.~Sager.
\newblock Mixed-integer nmpc for predictive cruise control of heavy-duty
  trucks.
\newblock In {\em 2013 European Control Conference (ECC)}, pages 4118--4123.
  IEEE, 2013.

\bibitem{kirches2020approximation}
C.~Kirches, F.~Lenders, and P.~Manns.
\newblock Approximation properties and tight bounds for constrained
  mixed-integer optimal control.
\newblock {\em SIAM Journal on Control and Optimization}, 58(3):1371--1402,
  2020.

\bibitem{kirches2020compactness}
C.~Kirches, P.~Manns, and S.~Ulbrich.
\newblock Compactness and convergence rates in the combinatorial integral
  approximation decomposition.
\newblock {\em Mathematical Programming}, 2020.

\bibitem{lellmann2014imaging}
J.~Lellmann, D.~A. Lorenz, C.-B. Sch{\"o}nlieb, and T.~Valkonen.
\newblock Imaging with {K}antorovich--{R}ubinstein discrepancy.
\newblock {\em SIAM Journal on Imaging Sciences}, 7(4):2833--2859, 2014.

\bibitem{loxton2012control}
R.~Loxton, Q.~Lin, V.~Rehbock, and K.~L. Teo.
\newblock Control parameterization for optimal control problems with continuous
  inequality constraints: New convergence results.
\newblock {\em Numerical Algebra, Control and Optimization}, 2(3):571--599,
  2012.

\bibitem{manns2020relaxed}
P.~Manns.
\newblock Relaxed multibang regularization for the combinatorial integral
  approximation.
\newblock {\em arXiv preprint \url{https://arxiv.org/pdf/2011.00205}}, 2020.
\newblock submitted.

\bibitem{manns2020improved}
P.~Manns and C.~Kirches.
\newblock Improved regularity assumptions for partial outer convexification of
  mixed-integer {PDE}-constrained optimization problems.
\newblock {\em ESAIM: Control, Optimisation and Calculus of Variations}, 26,
  2020.

\bibitem{manns2018multidimensional}
P.~Manns and C.~Kirches.
\newblock Multidimensional sum-up rounding for elliptic control systems.
\newblock {\em SIAM Journal on Numerical Analysis}, 58(6):3427--3447, 2020.

\bibitem{maurer2004second}
H.~Maurer and N.~P. Osmolovskii.
\newblock Second order sufficient conditions for time-optimal bang-bang
  control.
\newblock {\em SIAM Journal on Control and Optimization}, 42(6):2239--2263,
  2004.

\bibitem{newby2015trust}
E.~Newby and M.~M. Ali.
\newblock A trust-region-based derivative free algorithm for mixed integer
  programming.
\newblock {\em Computational Optimization and Applications}, 60(1):199--229,
  2015.

\bibitem{rudin1992nonlinear}
L.~I. Rudin, S.~Osher, and E.~Fatemi.
\newblock Nonlinear total variation based noise removal algorithms.
\newblock {\em Physica D: Nonlinear Phenomena}, 60(1-4):259--268, 1992.

\bibitem{ruffler2016optimal}
F.~R{\"u}ffler and F.~M. Hante.
\newblock Optimal switching for hybrid semilinear evolutions.
\newblock {\em Nonlinear Analysis: Hybrid Systems}, 22:215--227, 2016.

\bibitem{sager2005numerical}
S.~Sager.
\newblock {\em Numerical methods for mixed-integer optimal control problems}.
\newblock Der andere Verlag T{\"o}nning, L{\"u}beck, Marburg, 2005.

\bibitem{sager2012integer}
S.~Sager, H.-G. Bock, and M.~Diehl.
\newblock {T}he integer approximation error in mixed-integer optimal control.
\newblock {\em {M}athematical {P}rogramming, {S}eries {A}}, 133(1--2):1--23,
  2012.

\bibitem{sager2011combinatorial}
S.~Sager, M.~Jung, and C.~Kirches.
\newblock {C}ombinatorial integral approximation.
\newblock {\em {M}athematical {M}ethods of {O}perations {R}esearch},
  73(3):363--380, 2011.

\bibitem{sager2019mixed}
S.~Sager and C.~Zeile.
\newblock On mixed-integer optimal control with constrained total variation of
  the integer control.
\newblock {\em submitted}, 2019.

\bibitem{simon2015operator}
B.~Simon.
\newblock {\em Operator Theory, A comprehensive course in analysis, {Part} 4}.
\newblock American Mathematical Society, Providence, 2015.
\newblock \doi{http://dx.doi.org/10.1090/simon/004}.

\bibitem{stein2009real}
E.~M. Stein and R.~Shakarchi.
\newblock {\em Real analysis: measure theory, integration, and {Hilbert}
  spaces}.
\newblock Princeton University Press, 2009.

\bibitem{stellato2016optimal}
B.~Stellato, B.~Ober-Bl{\"o}baum, and P.~J. Goulart.
\newblock Optimal control of switching times in switched linear systems.
\newblock In {\em 2016 IEEE 55th Conference on Decision and Control (CDC)},
  pages 7228--7233. IEEE, 2016.

\bibitem{vogel1996iterative}
C.~R. Vogel and M.~E. Oman.
\newblock Iterative methods for total variation denoising.
\newblock {\em SIAM Journal on Scientific Computing}, 17(1):227--238, 1996.

\bibitem{zeidler2012applied}
E.~Zeidler.
\newblock {\em Applied functional analysis: main principles and their
  applications}, volume 109.
\newblock Springer Science \& Business Media, 2012.

\end{thebibliography}
\bibliographystyle{plain}

\bigskip
\framebox{\parbox{.92\linewidth}{The submitted manuscript has been created by
UChicago Argonne, LLC, Operator of Argonne National Laboratory (``Argonne'').
Argonne, a U.S.\ Department of Energy Office of Science laboratory, is operated
under Contract No.\ DE-AC02-06CH11357.  The U.S.\ Government retains for itself,
and others acting on its behalf, a paid-up nonexclusive, irrevocable worldwide
license in said article to reproduce, prepare derivative works, distribute
copies to the public, and perform publicly and display publicly, by or on
behalf of the Government.  The Department of Energy will provide public access
to these results of federally sponsored research in accordance with the DOE
Public Access Plan \url{http://energy.gov/downloads/doe-public-access-plan}.}}
\end{document}